\documentclass[a4paper,11pt]{article}
\usepackage[utf8x]{inputenc}

\usepackage{amsthm}
\usepackage{amssymb}
\usepackage{amsmath}
\usepackage{verbatim}
\usepackage{url}
\usepackage{mathtools}
\usepackage{tikz}
\usepackage{latexsym}
\usepackage{stmaryrd}
\usepackage{afterpage}
\usepackage{youngtab}
\usepackage{psfrag}
\usepackage[boxsize=.3 em]{ytableau}

\usetikzlibrary{chains}
\tikzset{node distance=2em, ch/.style={circle,draw,on chain,inner sep=2pt},chj/.style={ch,join},every path/.style={shorten >=4pt,shorten <=4pt},line width=1pt,baseline=-1ex}

\theoremstyle{plain}
\newtheorem{theorem}{Theorem}[section]
\newtheorem{proposition}[theorem]{Proposition}
\newtheorem{prop}[theorem]{Proposition}
\newtheorem{cor}[theorem]{Corollary}
\newtheorem{lemma}[theorem]{Lemma}
\newtheorem{corollary}[theorem]{Corollary}
\newtheorem{conjecture}[theorem]{Conjecture}
\newtheorem{cl}{Claim}
\theoremstyle{definition}
\newtheorem{defn}{Definition}[section] 

\theoremstyle{remark}
\newtheorem*{remark}{Remark}

\newcommand{\p}{{p}}
\newcommand{\w}{{w}}
\newcommand{\inv}{^{-1}}
\newcommand{\Y}{\mathbb{Y}}
\newcommand{\YP}{Y_P}
\newcommand{\YB}{Y_B}
\newcommand{\YPd}{Y_{P,d}}

\newcommand{\pr}{\mathrm{pr}}
\newcommand{\PSp}{\mathrm{PSp}}
\newcommand{\Spin}{\mathrm{Spin}}
\newcommand{\LG}{\mathrm{LG}}
\newcommand{\OG}{\mathrm{OG^{co}}}
\renewcommand{\P}{\mathbb{P}}
\newcommand{\so}{\mathfrak{so}}
\renewcommand{\Spin}{\mathrm{Spin}}
\newcommand{\Sym}{\mathrm{Sym}}
\newcommand{\id}{\mathrm{id}}
\newcommand{\RR}{\mathcal{R}}
\newcommand{\Vspin}{V_{\mathrm{Spin}}}
\newcommand{\PD}{\mathrm{PD}}
\newcommand{\Cl}{\mathrm{Cl}}
\newcommand{\D}{\mathcal{D}}
\newcommand{\N}{\mathcal{N}}
\newcommand{\End}{\mathrm{End}}
\newcommand{\C}{\mathbb{C}}
\newcommand{\Z}{\mathbb{Z}}

\newcommand{\alabel}[1]{%
  \({\mathrlap{#1}}\)
}
\newcommand{\mlabel}[1]{%
  \(#1\)
}
\let\dlabel=\alabel
\let\ulabel=\mlabel
\newcommand{\dnode}[2][chj]{%
\node[#1,label={below:\dlabel{#2}}] {};
}
\newcommand{\dnodea}[3][chj]{%
\dnode[#1,label={above:\ulabel{#2}}]{#3}
}
\newcommand{\dnodeanj}[2]{%
\dnodea[ch]{#1}{#2}
}
\newcommand{\dnodenj}[1]{%
\dnode[ch]{#1}
}
\newcommand{\dnodebr}[1]{%
\node[chj,label={below right:\dlabel{#1}}] {};
}

\newcommand{\dydots}{%
\node[chj,draw=none,inner sep=1pt] {\dots};
}

\newcommand{\QLeftarrow}{%
\begingroup
\tikz
\draw[shorten >=0pt,shorten <=0pt] (0,3pt) -- ++(-1em,0) (0,1pt) -- ++(-1em-1pt,0) (0,-1pt) -- ++(-1em-1pt,0) (0,-3pt) -- ++(-1em,0) (-1em+1pt,5pt) to[out=-105,in=45] (-1em-2pt,0) to[out=-45,in=105] (-1em+1pt,-5pt);
\endgroup
}

\title{A Landau-Ginzburg model for Lagrangian Grassmannians, Langlands duality and relations in quantum cohomology}
\author{C. Pech and K. Rietsch}

\begin{document}

\maketitle

\section{Introduction}

For a complex simple, simply connected algebraic group $G$ and parabolic subgroup $P$, the homogeneous space $G/P$ has a Landau-Ginzburg model defined by the second author \cite{rietsch}, which is a regular function on an affine subvariety of the Langlands dual group and is shown in \cite{rietsch} to recover the Peterson variety presentation \cite{peterson} of the quantum cohomology of $G/P$. In the case of type A Grassmannians R.~Marsh and the second author \cite{MarshRietsch} reformulated this Landau-Ginzburg model as a rational function on a Langlands dual Grassmannian, and used this formulation to prove a version of the mirror symmetry conjecture about flat sections of the A-model connection stated in \cite{BCFKvS}.

In this paper we formulate an LG-model $(\check X, W_t)$ for $G/P$ in the case of a Lagrangian Grassmannian in the spirit of the mirrors of the type $A$ Grassmannians, and prove that it is isomorphic to the LG-model from \cite{rietsch}. This LG model has some very interesting features, which are not visible in the type $A$ case, to do with the non-triviality of Langlands duality. We also formulate an explicit conjecture relating our superpotential
with the quantum differential equations of $LG(m)$. Finally, our expression for $W_t$ also leads us to conjecture new formulas in the quantum Schubert calculus of $LG(m)$.

To give an idea of our result, which is very explicit, we give the first two interesting examples here. Note that the Schubert basis of $H^*(LG(m))$ is indexed by strict partitions $\lambda$ fitting in an $m\times m $ box
and can be identified with coordinates $\p_\lambda$ on the Grassmannian $\OG(m+1,2m+1)$ of $(m+1)$-dimensional co-isotropic subspaces of $\C^{2m+1}$ endowed with a non-degenerate quadratic form. Note that $\OG(m+1,2m+1)$ is canonically isomorphic to the maximal orthogonal Grassmannian $\mathrm{OG}(m,2n+1)$. Moreover, it is related to $X$ by Langlands duality. The goal of this paper is to give an explicit description of a Landau-Ginzburg model for $LG(m)$ as a rational function on $\OG(m+1,2m+1)$. As an example, for $LG(2)$ our Landau-Ginzburg model is the rational function on $\OG(3,5)$ given by
$$
W_t=\frac{\p_{\ydiagram{1}}}{\p_{\emptyset}}+\frac{\p_{\ydiagram{2}}^2}{\p_{\ydiagram{1}}\p_{\ydiagram{2}}-\p_{\emptyset}\p_{\ydiagram{2,1}}}+
e^t\frac{\p_{\ydiagram{1}}}{\p_{\ydiagram{2,1}}}.
$$
For $LG(3)$ we obtain the rational function on $\OG(4,7)$,
$$
W_t=\frac{\p_{\ydiagram{1}}}{\p_{\emptyset}}+
\frac{\p_{\ydiagram{2}}\p_{\ydiagram{3}}-\p_{\emptyset}\p_{\ydiagram{3,2}}}
{\p_{\ydiagram{1}}\p_{\ydiagram{3}}-\p_{\emptyset}\p_{\ydiagram{3,1}}}+
\frac{\p_{\ydiagram{3,1}}\p_{\ydiagram{3,2}}-\p_{\ydiagram{3}}\p_{\ydiagram{3,2,1}}}
{\p_{\ydiagram{2,1}}\p_{\ydiagram{3,2}}-\p_{\ydiagram{2}}\p_{\ydiagram{3,2,1}}}+
e^t\frac{\p_{\ydiagram{2,1}}}{\p_{\ydiagram{3,2,1}}}.
$$
We generalise these formulas and prove that they agree with the superpotential from \cite{rietsch} after suitable identifications.

Notice how  the above formulas have $3, 4$ summands, these numbers being the index of $X=LG(2)$, $LG(3)$, respectively. Indeed this comes from the fact that in all of the cases $W_t$ represents the anti-canonical class of $X$ in a natural sense (in the Jacobi ring for example), and each summand represents a hyperplane class. On the other hand, because $W_t$ wants to be regular in the complement of an anti-canonical divisor, the degrees of the denominators in $W_t$ should add up to the index of $\check X$. That is,  in the above two cases to $4$ and $6$, these being the index of $\OG(3,5)$ and $\OG(4,7)$, respectively. This is exactly what is achieved by the quadratic terms in the $LG(m)$ cases, with $1+2+1=4$, and $1+2+2+1=6$ (and so forth, in our general formula).

For usual Grassmannians $X$ and $\check X$ are isomorphic so have the same index. 
Therefore numerators and denominators in $W_t$ are allowed to be sections of $\mathcal O(1)$.
This leads to the formulas in \cite{MarshRietsch} looking more compact.

\section{Background}

In \cite{rietsch}, the second author gave a Lie-theoretic construction of a Landau-Ginzburg model of any complete homogeneous space $X$  of a simple complex algebraic group. The LG-model $(\check X^\circ, W)$ is set in the world of the Langlands dual group. 

\subsection{Notation}

Let $X$ be a complete homogeneous space for a simple complex algebraic group. For the purposes of this paper we will denote the group acting on $X$ by $G^\vee$ and assume that $G^\vee$ is simply connected,  and we will denote its Langlands dual group by $G$, which is therefore an adjoint group.
For $G^\vee$ we may fix Chevalley generators $(e_i^\vee)_{1 \leq i \leq m}$ and $(f_i^\vee)_{1 \leq i \leq m}$ and correspondingly Borel
subgroups $B^\vee_+=T^\vee U^\vee_+$ and $B^\vee_-=T^\vee U^\vee_-$. We may
assume that $X=G^\vee/P^\vee$ for a parabolic subgroup $P^\vee$ which contains $B^\vee_+$. The parabolic $P^\vee$ is determined by a choice of subset of the $(f_i^\vee)_{1 \leq i \leq m}$. This set also determines a parabolic subgroup $P$ of $G$, where we also have the analogous Borel subgroups $B_+=T U_+$ and $B_-=T U_-$ and Chevalley generators $(e_i)_{1 \leq i \leq m}$ and $(f_i)_{1 \leq i \leq m}$. Let $\Pi=\{\alpha_i\ |\ i\in I\}$ denote the set of simple roots. The set of all roots is $R=R^+ \sqcup R^-$, where $R^+$ is the subset of positive roots and $R^-$ the subset of negative roots. 

Denote by $W$ the Weyl group of $G$ (canonically identified with the Weyl group of $G^\vee$), and let $W_P$ be the Weyl group of the parabolic subgroup~$P$. Let $T^{W_P}$ be the $W_P$-fixed sub-torus. 
If $\alpha$ is a positive root, we denote by $s_\alpha \in W$ the associated reflection.  Let $R_P^+$ be the set of all positive roots $\alpha$ such that $s_\alpha \in W_P$, $\Pi_P$ be the set of simple roots in $R_P^+$, and $\Pi^P=\Pi\setminus\Pi_P$.  
When $\alpha=\alpha_i$ is a simple root, we set $s_i := s_{\alpha_i}$. Moreover, we denote the length of $w\in W$ by $\ell(w)$. It is equal to the minimum number of simple reflections whose product is $w$. We also let $w_0$ and  $w_P$, be the longest elements in $W$ and $W_P$, respectively, and define $W^P$ to be the set of minimal length coset representatives for $W/W_P$. The minimal length coset representative for $w_0$ is denoted by $w^P$, so that $w_0=w^P w_P$. Let $\dot{w}$ denote a representative of $w \in W$ in $G$.

Using the exponential map we may  think of $U_+$ and $U_-$ as being embedded in the completed universal enveloping algebra $\hat{\mathcal U}_+$, respectively $\hat{\mathcal U}_+$. Accordingly $e_i^*(u)$  will denote the coefficient of $e_i$ in $u\in U_+$ after this embedding, and analogously for $f_i^*$ and $\bar u\in U_-$.

\subsection{Quantum cohomology of $G/P$}

The quantum cohomology ring of a smooth complex projective variety $X$ is a deformation of its cohomology ring. While the cohomology ring of $X$ encodes the way its subvarieties intersect each other, the quantum cohomology ring encodes the way they are connected by rational curves. The structure constants of the (small) quantum cohomology ring are called Gromov-Witten invariants. When $X=G/P$ is homogeneous, Gromov-Witten invariants count the number of rational curves of given degree intersecting three given Schubert varieties of $X$.

The quantum cohomology rings of a full flag variety was first described by Givental and B.~Kim \cite{GiventalKim,Kim}, who related it to a degenerate leaf of the Toda lattice of the Langlands dual group. Soon after, Dale Peterson came up with a new point of view in which all of the quantum cohomology rings of complete homogeneous spaces for one group are encoded in terms of strata of one remarkable subvariety of the Langlands dual full flag variety. This so-called {\it Peterson variety} $\Y$ is defined as follows. In our conventions Peterson's variety $\Y$ encoding the quantum cohomology rings of $G^\vee$-homogeneous spaces is a subvariety of $G/B_-$. Denote by $\mathfrak n_-$ the Lie algebra of $U_-$, and by $[\mathfrak n_-,\mathfrak n_-]$ its commutator subalgebra. The annihilator in $\mathfrak g^*$ of a subspace $\mathfrak l$ of $\mathfrak g$ is denoted by $\mathfrak l^\perp$. Consider the coadjoint action of $G$ on $\mathfrak g^*$ and the  `principal nilpotent' element $F=\sum e_i^*$ in $\mathfrak g^*$. Then
\[
\Y:=\{gB_-\ |\ g\inv \cdot F\in [\mathfrak n_-,\mathfrak n_-]^\perp\}.
 \]
First note that this variety has an open stratum $\YB=\Y\cap (B_+ B_-/B_-)$ which is isomorphic to the degenerate leaf of the Toda lattice for $G$ via the map $\YB\hookrightarrow \mathfrak g^*$ defined by $u_+B_-\mapsto u_+^{-1} \cdot F$. By Peterson's theory, the quantum cohomology rings for all other $G^\vee/P^\vee$ are described by the coordinate rings of the smaller strata $\YP=\Y\cap (B_+ \dot w_PB_-/B_-)$, where we take the intersection in the possibly non-reduced sense.

\begin{theorem}[Peterson]
 The quantum cohomology of $G^\vee/P^\vee$ is isomorphic to the coordinate ring $\C[\YP]$ of the stratum $\YP$ of the Peterson variety $\Y$.
\end{theorem}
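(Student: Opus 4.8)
The plan is to realize the quantum cohomology ring $QH^*(G^\vee/P^\vee)$ via the Peterson presentation and then match it with the coordinate ring $\C[\YP]$ stratum by stratum. First I would recall the Peterson presentation of $QH^*(G^\vee/P^\vee)$ as a quotient of a polynomial ring: one takes the quantum cohomology of $G^\vee/B^\vee$, which by Kim's theorem is the Jacobi-type ring associated with the conserved quantities of the Toda lattice of $G$, and then one specializes the quantum parameters $q_i$ for $i \in \Pi_P$ to zero while inverting the remaining $q_i$ for $i \in \Pi^P$. On the geometric side, I would set up coordinates on the big cell of $G/B_-$ using the isomorphism $U_+ \xrightarrow{\sim} B_+B_-/B_-$, and express the defining condition $u_+^{-1}\cdot F \in [\mathfrak n_-, \mathfrak n_-]^\perp$ as an explicit system of polynomial equations in these coordinates; restricting further to the locus $B_+\dot w_P B_-/B_-$ cuts out $\YP$.

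The key steps, in order, would be: (1) identify $\YB = \Y \cap (B_+B_-/B_-)$ with the degenerate Toda leaf for $G$ via $u_+B_- \mapsto u_+^{-1}\cdot F$, and check this is an isomorphism of affine varieties (this is essentially the Kostant-Toda picture); (2) invoke Kim's theorem to conclude $\C[\YB] \cong QH^*(G^\vee/B^\vee)[q_1^{-1},\dots,q_m^{-1}]$, where the $q_i$ appear as the functions $f_i^*(u_+^{-1})$ or the relevant matrix coefficients; (3) analyze how passing from the open stratum $\YB$ to the smaller stratum $\YP = \Y \cap (B_+\dot w_P B_-/B_-)$ affects these coordinate functions — the point is that $\dot w_P$ conjugates the torus direction so that exactly the $q_i$ with $i\in\Pi_P$ get sent to $0$ and the $q_i$ with $i \in \Pi^P$ remain invertible; (4) match this with the Peterson specialization description of $QH^*(G^\vee/P^\vee)$, using the known fact (Peterson, Lam-Shimozono, Woodward) that $QH^*(G^\vee/P^\vee)$ is obtained from $QH^*(G^\vee/B^\vee)$ by precisely this specialization of quantum parameters together with restriction to the appropriate subalgebra generated by the $W_P$-invariant classes. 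One must also verify the dimension/scheme-theoretic compatibility: that the non-reduced intersection defining $\YP$ matches the possibly non-semisimple ring $QH^*(G^\vee/P^\vee)$.

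The main obstacle I anticipate is step (3) together with the scheme-theoretic bookkeeping: one must carefully track what happens to the torus coordinate along $\dot w_P$ and show that the limit is well-behaved, i.e. that the closure relation among Peterson strata corresponds exactly to the degeneration $q_i \to 0$ for $i \in \Pi_P$ on the nose, with the correct multiplicities so that $\C[\YP]$ (taken in the non-reduced sense) is isomorphic to $QH^*(G^\vee/P^\vee)$ rather than merely to its reduction. Handling the adjoint-versus-simply-connected subtlety between $G$ and $G^\vee$ — which is what makes $F = \sum e_i^*$ live in $\mathfrak g^*$ and the quantum parameters match up with the fundamental coweights correctly — is also delicate and needs care, since the coroot/coweight lattices enter the identification of $H^2$ with the span of the quantum parameters. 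Everything else (writing down the Toda equations, identifying $[\mathfrak n_-,\mathfrak n_-]^\perp$ explicitly) should be a routine if lengthy computation.
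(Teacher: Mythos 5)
The paper does not prove this statement at all: it is quoted as background and attributed to Dale Peterson (with the $P=B$ case going back to the Toda-lattice picture of Givental--Kim and Kostant), so there is no internal argument to compare yours against. Judged on its own terms, your sketch is fine through steps (1)--(2) — identifying $\YB$ with the degenerate Toda leaf via $u_+B_-\mapsto u_+^{-1}\cdot F$ and invoking Kim's presentation is exactly the standard route to the full flag case.

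The genuine gap is in steps (3)--(4). You assert as a ``known fact'' that $qH^*(G^\vee/P^\vee)$ is obtained from $qH^*(G^\vee/B^\vee)$ by setting $q_i=0$ for $i\in\Pi_P$, inverting the remaining $q_i$, and restricting to a subalgebra, and that geometrically this specialization carries $\YB$ to the stratum $\YP$. Neither assertion holds. There is no ring homomorphism between $qH^*(G^\vee/B^\vee)$ and $qH^*(G^\vee/P^\vee)$ implementing such a specialization: the quantum product does not commute with pullback along $G^\vee/B^\vee\to G^\vee/P^\vee$, and the actual Peterson/Woodward comparison is a statement about individual Gromov--Witten invariants of $G^\vee/P^\vee$ being equal to invariants of $G^\vee/B^\vee$ for explicitly \emph{lifted} (shifted) degrees, not a degeneration $q_i\to 0$ of the ring. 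Correspondingly, on the geometric side $\YP$ is a separate locally closed stratum of $\Y$, not the fiber of $\YB$ over $q_i=0$; the quantum parameters for the $P$-stratum arise as the values $\alpha_j(d)$, $\alpha_j\in\Pi^P$, of the torus part $d\in T^{W_P}$ in the factorization used in the paper, and the functions you would want to ``send to zero'' do not extend across the boundary in the way your step (3) requires. This is precisely why Peterson's theorem for general $P$ is hard: the published proofs (Lam--Shimozono via the homology of the affine Grassmannian, together with the quantum Chevalley formula of Fulton--Woodward and a uniqueness argument characterizing the ring by Chevalley multiplication; or case-by-case verifications as in Rietsch and Cheong) avoid any such naive specialization. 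As written, your reduction of the general case to Kim's theorem would fail at this point, and the scheme-theoretic ``bookkeeping'' you flag as the main obstacle cannot be carried out because the underlying ring-level statement it is meant to refine is false.
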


In \cite{FW}, Fulton and Woodward proved a quantum Chevalley formula for $X=G/P$, i.e. a formula giving the product of an arbitrary Schubert class by any Schubert class associated to a Schubert divisor. Here we state this formula, which we will refer to in Section \ref{s:relations}. Note that for $P=B$, the formula is a result of Peterson \cite{peterson}.

If $s_i$ is a simple reflection, we denote by $\Gamma_i \in H_2(X,\Z)$ the associated dimension $1$ Schubert cycle, and we define, for $\alpha \in R^+ \setminus R_P^+$ :
\[
 d(\alpha) := \sum_{i=1}^m \alpha^\vee(\omega_i) \Gamma_i.
\]
Now set $q^{d(\alpha)} := \prod_{i=1}^m q_i^{\alpha^\vee(\omega_i)}$, where $q_i$ is the quantum parameter associated to $\Gamma_i$. Finally, for $\alpha \in R^+ \setminus R_P^+$, we define $n_\alpha := \int_{\Gamma_\alpha} c_1(TX)$, where $\Gamma_\alpha \in H_2(X,\Z)$ is the dimension $1$ cycle associated to the reflection $s_\alpha$ (it is a linear combination of the $\Gamma_i$).

\begin{theorem}[\cite{FW}]
\label{th:quantChevalley}
 For $1 \leq i \leq m$ and $w \in W^P$ we have
 \[
  \sigma_{s_i} \star \sigma_w = \sum_\alpha \alpha^\vee(\omega_i) \sigma_{w s_\alpha} + \sum_{\alpha} q^{d(\alpha)} \alpha^\vee(\omega_i) \sigma_{w s_\alpha},
 \]
 where the first sum is over roots $\alpha \in R^+ \setminus R_P^+$ such that $l(w s_\alpha)=l(w)+1$, and the second sum over roots $\alpha \in R^+ \setminus R_P^+$ such that $l(w s_\alpha)=l(w)+1-n_\alpha$.
\end{theorem}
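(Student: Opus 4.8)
The plan is to read the formula off from the genus-zero three-point Gromov--Witten invariants that define the quantum product. Writing $\sigma^u$ for the Schubert class Poincaré-dual to $\sigma_u$, I would start from
\[
\sigma_{s_i}\star\sigma_w\ =\ \sum_{u\in W^P,\ d}\ \langle\sigma_{s_i},\sigma_w,\sigma^u\rangle_d\ q^d\,\sigma_u ,
\]
the invariants being evaluated on the moduli space of stable maps $\overline M_{0,3}(X,d)$ and the sum running over effective $d\in H_2(X,\Z)$. The dimension axiom forces such an invariant to vanish unless
\[
1+\ell(w)+\bigl(\dim X-\ell(u)\bigr)\ =\ \dim X+\langle c_1(TX),d\rangle ,
\]
i.e.\ unless $\ell(u)=\ell(w)+1-\langle c_1(TX),d\rangle$. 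For $d=0$ this says $\ell(u)=\ell(w)+1$, the invariant is an ordinary triple intersection number, and the resulting degree-zero part of the product is precisely the classical Chevalley formula for $G/P$; this yields the first sum, over $\alpha\in R^+\setminus R_P^+$ with $\ell(ws_\alpha)=\ell(w)+1$, with multiplicities $\alpha^\vee(\omega_i)$.

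For $d\neq 0$ two things remain: to show that the only contributing degrees are $d=d(\alpha)$ for roots $\alpha\in R^+\setminus R_P^+$ with $\ell(ws_\alpha)=\ell(w)+1-n_\alpha$ (this is exactly the dimension constraint with $u=ws_\alpha$, since $n_\alpha=\langle c_1(TX),d(\alpha)\rangle$ by definition of $d(\alpha)$ as the class of $\Gamma_\alpha$), and to show that for such $\alpha$ the invariant equals $\alpha^\vee(\omega_i)$. The heart of the matter is the geometry of rational curves meeting general translates of a Schubert divisor $X_{s_i}$ and of two further Schubert varieties: a moving lemma together with a dimension count on the space of such curves shows that inserting a divisor class severely restricts the effective degree $d$, and a finer analysis — of when a minimal-degree curve of class $d(\alpha)$ through a general point can meet $X_{s_i}$, $X_w$ and $X^u$, and of the codimension-one degenerations of such curves — pins the surviving degrees down to the $d(\alpha)$ and identifies the coefficient $\alpha^\vee(\omega_i)=\langle\alpha^\vee,\omega_i\rangle$ as an intersection multiplicity, exactly as in the classical formula. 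I would first carry this out for the full flag variety $G/B$, where the corresponding curves are best understood and the formula is Peterson's, and then transfer to $G/P$ through the fibration $\pi\colon G/B\to G/P$, comparing the two families of invariants by the projection formula and the dimension of the fibre.

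The hard part will be the vanishing for large $d$: classically there are no corrections at all, and ruling out every effective degree other than the $d(\alpha)$ genuinely uses the structure of $\overline M_{0,n}(X,d)$ together with the combinatorics of the Bruhat order on $W^P$ — for each stray $d$ one must show that the Gromov--Witten variety cut out by the three Schubert conditions is empty or has excess dimension. Alternatively, once the presentation $QH^*(G^\vee/P^\vee)\cong\C[\YP]$ of Peterson's theorem is available, the multiplications by Schubert divisor classes are already encoded in the defining condition $g\inv\cdot F\in[\mathfrak n_-,\mathfrak n_-]^\perp$ of $\YP$, and the task becomes to match the resulting linear relations with the asserted expansion; this replaces curve counting by a purely Lie-theoretic computation, but the matching is itself the main work.
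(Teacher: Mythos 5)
The paper does not prove this statement at all: it is quoted from Fulton--Woodward \cite{FW} (with the $P=B$ case attributed to Peterson), so the only fair comparison is with the proof in the literature. Your general framework is the right one and is how \cite{FW} begin: expand $\sigma_{s_i}\star\sigma_w$ in three-point genus-zero invariants, use the dimension axiom to constrain $\ell(u)$ against $\langle c_1(TX),d\rangle$, and recover the classical Chevalley formula from $d=0$. One simplification you miss: since $\sigma_{s_i}$ is a divisor class, the divisor axiom gives $\langle\sigma_{s_i},\sigma_w,\sigma^u\rangle_d=\bigl(\textstyle\int_d\sigma_{s_i}\bigr)\langle\sigma_w,\sigma^u\rangle_d$ for $d\neq 0$, so the coefficient $\alpha^\vee(\omega_i)=\int_{d(\alpha)}\sigma_{s_i}$ comes for free and the whole theorem reduces to showing that the two-point invariant $\langle\sigma_w,\sigma^u\rangle_d$ equals $1$ when $d=d(\alpha)$, $u=ws_\alpha$, $\ell(ws_\alpha)=\ell(w)+1-n_\alpha$, and vanishes for every other effective $d\neq 0$. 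But that vanishing and normalization statement is precisely the content of the theorem, and your sketch defers it (``a finer analysis \dots pins the surviving degrees down'', ``the hard part will be the vanishing for large $d$''). In \cite{FW} this is done by a concrete study of the Gromov--Witten varieties of pointed rational curves of degree $d$ meeting two general Schubert varieties, with a key lemma (via chains of $T$-invariant curves and Bruhat-order estimates) bounding $\ell(u)$ in terms of $\ell(w)$ and $d$; nothing in your outline supplies an argument of this kind, so as it stands the central step is missing rather than proved.

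A second concrete problem is your proposed reduction: ``first carry this out for $G/B$ \dots then transfer to $G/P$ through the fibration $\pi\colon G/B\to G/P$, comparing the two families of invariants by the projection formula.'' Quantum cohomology is not functorial for $\pi$, and three-point invariants of $G/P$ are not pushforwards of those of $G/B$ in any routine sense; the relation between $qH^*(G/B)$ and $qH^*(G/P)$ is the Peterson--Woodward comparison formula, itself a substantial theorem (proved by Woodward after \cite{FW}), not a projection-formula exercise. Fulton and Woodward avoid this entirely by working directly on $G/P$. Similarly, your alternative route through Peterson's presentation $qH^*(G^\vee/P^\vee)\cong\C[\YP]$ would be circular in spirit for the $P=B$ case (Peterson's Chevalley formula underlies that theory) and in any case replaces the missing geometry by an unproven Lie-theoretic matching. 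So: right skeleton, but the degree-restriction lemma and the $G/B\to G/P$ transfer are genuine gaps, and the second, as formulated, would fail.
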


\subsection{The Lie-theoretic LG model construction}

We recall how the mirror Landau-Ginzburg models are defined in \cite{rietsch}. Let us fix a parabolic $P$. We consider the open Richardson variety $\RR := R_{w_P,w_0} \subset G / B_-$, namely 
\[ 
\RR:=R_{w_P,w_0} = (B_+\dot w_P B_-  \cap B_-\dot w_0 B_- )/ B_-.
\]
Instead of the whole stratum $\YP$ of the Peterson variety the LG-model is related to the open dense subset $\YP^*:=\Y\cap\RR$, whose coordinate ring in Peterson's theory encodes the quantum cohomology ring $qH^*(G^\vee/P^\vee)$ with quantum parameters made invertible. We note that in this setting if  $g=u_1 d \dot w_P \bar u_2=b_-\dot w_0$ represents an element $gB_-\in\RR$ lying in  $\YP^*$, then the values of the functions on $\YP^*$ corresponding to the quantum parameters are just the values $\alpha_j(d)$ for the simple roots $\alpha_j\in \Pi^P$. Indeed, fixing $d\in T^{W_P}$ determines a finite subscheme of $\YP^*= \Y\cap \RR$ which we denote by $\YPd^*=\YP^*\times_{T^{W_P}}\{d\}$ and for which the non-reduced coordinate ring $\C[\YPd^*]$ becomes identified with the quantum cohomology ring of $G^\vee/P^\vee$ with quantum parameters fixed to the values $\alpha_j(d)$ in Peterson's theory.
 
Now let us define
$$
Z=Z_{G^\vee/P^\vee} := B_- \dot w_0 \cap U_+ T^{W_P} \dot w_P U_- .
$$ 
There is an isomorphism
\[
 \begin{array}{ccc}
  Z & \rightarrow & \RR \times T^{W_P}, \\
 g= u_1 d \dot w_P  \bar u_2 = b_-\dot w_0& \mapsto &(gB_-,d).
 \end{array}
\]
Observe that $ gB_-=b_- \dot w_0 B_-=u_1 \dot w_P B_-$. Note that our conventions differ from \cite{rietsch} in that we have translated the original definition of the variety $Z$ by $\dot w_0$. The mirror superpotential to $X=G^\vee/P^\vee $ is now defined to be the regular function $\mathcal F: Z \to \C$ defined by

\begin{equation}\label{e:oldLGnoh}
      \mathcal F(u_1 d \dot w_P  \bar u_2 ) = \sum_{i=1}^m e_i^* (u_1) + \sum_{i=1}^m f_i^* (\bar u_2).
\end{equation}
Although $u_1$ and $\bar u_2$ are not uniquely determined for $g\in Z$, the function $\mathcal F$ is well-defined, as was shown in \cite{rietsch}. Actually, there is another small difference with \cite{rietsch}, in that in \cite{rietsch} the group on the mirror side is assumed adjoint, whereas here we have assumed $G$ to be simply connected. However we could have carried out the above definitions  for $G/\mathrm{Center}(G)$, and in the following it will not matter. 

The superpotential $\mathcal F$ may also be interpreted as a family of functions $\mathcal F_h:\RR\to\C$ depending holomorphically  on a parameter $h\in \mathfrak h^{W_P}$, by setting
\begin{equation}\label{e:oldLG}
\   \mathcal F_h 
       (u_1 \dot w_P B_-) =   \sum_{i=1}^m e_i^* (u_1) + \sum_{i=1}^m f_i^* (\bar u_2)
\end{equation}
where $u_1\in U_+$ and $u_1\dot w_PB_-\in\RR$, and where $\bar u_2\in U_-$ is related to $u_1$ by $u_1 e^h \dot w_P \bar u_2\in Z$. Equivalently the relationship between $u_1$ and $\bar u_2$ can be expressed as 
\[ 
 \bar u_2 \cdot B_+= e^{-h}\dot w_P\inv u_1\inv \cdot  B_-. 
\]  
where $g\cdot B$ denotes the conjugation action of $g\in G$ on a Borel subgroup $B$. 

The main result in \cite{rietsch} describes the critical point scheme of $\mathcal F_h$ as subscheme of $\RR$ lying inside the Peterson variety. We denote by $Y_{P, e^h}^*$ the (non-reduced) fiber over $e^h$ of the Peterson variety, namely
\[
 Y_{P, e^h}^*=\YP^*\times_{T^{W_P}}\{e^h\}.
\]

\begin{theorem}
[\cite{rietsch}]
The critical point scheme of $\mathcal F_h$ agrees with $Y_{P, e^h}^*$.
\end{theorem}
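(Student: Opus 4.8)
The plan is to compute the differential of $\mathcal F_h$ in Lie-theoretic coordinates and to match its vanishing, as a scheme, with the Peterson condition cutting out $Y_{P,e^h}^*$ inside $\RR$. It is convenient to prove at once the relative statement over $T^{W_P}$, namely that the relative critical scheme of $\mathcal F\colon Z\to\C$ along the fibres of $Z\to T^{W_P}$ equals $\YP^*$; specialising to the fibre over $e^h$ then gives the theorem, since on that fibre $\mathcal F$ restricts to $\mathcal F_h$.

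First I would fix coordinates. Let $g=u_1 d\dot w_P\bar u_2=b_-\dot w_0\in Z$ with image $gB_-\in\RR$, and consider a curve $u_1(s)=u_1\exp(sA)$, $\bar u_2(s)=\bar u_2\exp(sC)$ with $A\in\mathfrak n_+$, $C\in\mathfrak n_-$ and $d$ held fixed. Its left-invariant velocity is $\eta:=\tfrac{d}{ds}\big|_{0}g(s)\,g\inv=\mathrm{Ad}(u_1)A+\mathrm{Ad}(g)C$, and the condition $g(s)\in B_-\dot w_0$ forces $\eta\in\mathfrak b_-$; the residual freedom in the factorisation $g=u_1 d\dot w_P\bar u_2$ (a torsor under the unipotent radical of the Borel of the Levi) is the origin of the relations that make $\mathcal F$ well-defined. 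Since $\chi_+:=\sum_i e_i^*$ on $U_+$ and $\chi_-:=\sum_i f_i^*$ on $U_-$ are homomorphisms to $(\C,+)$ killing $[\mathfrak n_\pm,\mathfrak n_\pm]$, differentiating $\mathcal F(g(s))=\chi_+(u_1(s))+\chi_-(\bar u_2(s))$ yields $d_{\mathrm{rel}}\mathcal F(\eta)=\langle F,A\rangle+\langle F^-,C\rangle$, where $F=\sum_i e_i^*$ and $F^-=\sum_i f_i^*\in\mathfrak g^*$. Now $F$ vanishes on $\mathfrak b_-$, and $\langle F,\mathrm{Ad}(u)\zeta\rangle=\langle F,\zeta\rangle$ for $u\in U_+$, $\zeta\in\mathfrak n_+$ (only the height-one part of $\zeta$ contributes, and $\mathrm{Ad}(U_+)$ preserves it). So, using $\mathrm{Ad}(u_1)A=\eta-\mathrm{Ad}(g)C$, we get $\langle F,A\rangle=-\langle F,\mathrm{Ad}(g)C\rangle=-\langle g\inv\cdot F,C\rangle$ and hence
\[
 d_{\mathrm{rel}}\mathcal F(\eta)=\langle\,F^- - g\inv\cdot F,\ C\,\rangle .
\]
As $\eta$ runs over the fibre tangent space, $C$ runs over all of $\mathfrak n_-$ modulo the well-definedness relations, so a point of $\RR$ is a relative critical point of $\mathcal F$ exactly when $(g\inv\cdot F)|_{\mathfrak n_-}=F^-|_{\mathfrak n_-}$.

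Next I would identify this locus with $\YP^*=\Y\cap\RR$, defined by $g\inv\cdot F\in[\mathfrak n_-,\mathfrak n_-]^\perp$. One inclusion is immediate, because $F^-$ itself annihilates $[\mathfrak n_-,\mathfrak n_-]$. For the converse, note that for $g\in Z\subseteq B_-\dot w_0$ the element $g\inv\cdot F$ automatically annihilates $\mathfrak b_+$ (since $\mathrm{Ad}(g\inv)$ carries a principal nilpotent of $\mathfrak n_-$ into $\mathfrak n_+$); hence the Peterson condition amounts to the vanishing of the components of $g\inv\cdot F$ along the non-simple root spaces, leaving only the simple-root components free, while the critical condition $(g\inv\cdot F)|_{\mathfrak n_-}=F^-|_{\mathfrak n_-}$ in addition prescribes those simple-root components. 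One must therefore show that on $Z$ these components are already pinned to the required values: along $\Pi_P$ to their ``classical'' values by the presence of $\dot w_P$ (using $\mathrm{Ad}(\dot w_P)\mathfrak n_-=(\mathfrak n_+\cap\mathfrak l)\oplus\mathfrak u_P^-$, where $\mathfrak l$ is the Levi subalgebra and $\mathfrak u_P^-\subset\mathfrak n_-$ the nilradical of the opposite parabolic, together with the $W_P$-invariance of $d$), and along $\Pi^P$ they are precisely the quantum parameters, so prescribing them to their values at $d=e^h$ is exactly the condition defining $Y_{P,e^h}^*=\YP^*\times_{T^{W_P}}\{e^h\}$. Carrying out this matching — verifying that the critical ideal and the Peterson ideal (together with $\{d=e^h\}$) generate the same ideal, with nothing lost or gained — is the step I expect to be the main obstacle.

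Finally, to obtain the statement about schemes and not just about points, the computation of $d_{\mathrm{rel}}\mathcal F$ should be performed with regular functions rather than at a point: a choice of basis of the relative cotangent sheaf writes the Jacobian ideal of $\mathcal F$ in terms of the functions $g\mapsto\langle F^- - g\inv\cdot F,\,\cdot\,\rangle$ on $\mathfrak n_-$, and the identification just sketched must be promoted to an equality of ideals. This is delicate because the fixed-$q$ quantum cohomology rings, hence the fibres $Y_{P,d}^*$, are in general not reduced, so one genuinely has to compare ideals rather than supports; this is the second point that requires care. Granting both, restricting to the fibre over $e^h$ identifies the critical point scheme of $\mathcal F_h$ with $Y_{P,e^h}^*$.
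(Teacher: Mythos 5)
This theorem is quoted in the paper from \cite{rietsch} and no proof is given here, so there is no in-paper argument to compare yours against; judged on its own terms, what you have written is a plan rather than a proof, and the plan stops exactly where the theorem's content begins. Your first-order computation of $d_{\mathrm{rel}}\mathcal F(\eta)=\langle F^--g\inv\cdot F, C\rangle$ is fine (the character property of $\chi_\pm$, the vanishing of $F$ on $\mathfrak b_-$, and the invariance $\langle F,\mathrm{Ad}(u)\zeta\rangle=\langle F,\zeta\rangle$ are all correct), but already the next assertion --- that as $\eta$ ranges over the tangent space of the fibre, $C$ ranges over all of $\mathfrak n_-$ modulo the well-definedness relations --- is not justified: the admissible pairs $(A,C)$ are coupled by the constraint $\eta\in\mathfrak b_-$, and what you actually need is that every such first-order pair is tangent to $Z_h=B_-\dot w_0\cap U_+e^{h}\dot w_P U_-$, i.e.\ a cleanness/transversality statement for this intersection (equivalently, control of $T_gZ_h$ via the isomorphism $Z_h\cong\RR$). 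Without it you only get one inclusion between the critical locus and the locus cut out by $(g\inv\cdot F)|_{\mathfrak n_-}=F^-|_{\mathfrak n_-}$.

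The deeper gaps are the two you yourself flag and defer. First, the converse inclusion requires showing that for a point of $\YP^*$ in the fibre over $e^h$, written with the specific representative $g=u_1e^{h}\dot w_P\bar u_2\in Z_h$, the simple-root components $(g\inv\cdot F)(f_j)$ are automatically equal to $1$: this is sensitive to the choice of representative (they are not invariant under the right $B_-$-action), to how $d=e^h$ rescales the components indexed by $\Pi^P$ (these are the quantum parameters $\alpha_j(e^h)$, not $1$, in Peterson's normalisation), and to a separate argument for the components indexed by $\Pi_P$ using $\dot w_P$. You gesture at the relevant decompositions but do not carry out the matching, and this is precisely the substance of the theorem in \cite{rietsch}. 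Second, the statement is scheme-theoretic and the fibres $Y_{P,e^h}^*$ are in general non-reduced, so one must prove an equality of ideals (the Jacobian ideal of $\mathcal F_h$ versus Peterson's ideal coming from the $qH^*(G^\vee/P^\vee)$ presentation), not an equality of zero sets; nothing in your outline engages with how that comparison of ideals would be made. As it stands, then, the proposal correctly identifies the shape of the problem and performs the easy differential computation, but the two steps that constitute the actual proof are missing.
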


Putting this together with Peterson's presentation this result can be interpreted as follows. Suppose $h\in\mathfrak h^{W_P}$ represents a Kaehler class $[\omega_h]$ under the identification $\mathfrak h^{W_P}=H^2(G^\vee/P^\vee)$.
\begin{cor}
\label{theo:LGmodel-G/P} 
 The Jacobi ring $\C[Z_h]/(\partial \mathcal F_h)$ of  $\mathcal F_h: Z_h \rightarrow  \C$ is isomorphic to the quantum  cohomology ring of the Kaehler manifold $(G^\vee/P^\vee,[\omega_h])$ in its  presentation due to Dale Peterson\cite{peterson}. 
\end{cor}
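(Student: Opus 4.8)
The plan is to obtain the statement formally by chaining the two theorems just recalled --- Peterson's presentation of $qH^*(G^\vee/P^\vee)$ as $\C[\YP]$, and the identification of the critical scheme of $\mathcal F_h$ with $Y_{P,e^h}^*$ from \cite{rietsch} --- together with the bookkeeping of quantum parameters.

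First I would note that, by definition, the Jacobi ring $\C[Z_h]/(\partial\mathcal F_h)$ is the coordinate ring of the critical point scheme of $\mathcal F_h$, taken in the scheme-theoretic (possibly non-reduced) sense. Here $Z_h$ is the fiber over $e^h$ of the projection $Z\cong\RR\times T^{W_P}\to T^{W_P}$; it maps isomorphically onto $\RR$, and under this isomorphism the function $\mathcal F_h$ on $Z_h$ becomes the function $\mathcal F_h\colon\RR\to\C$ of \eqref{e:oldLG}. By the theorem of \cite{rietsch} quoted above, this critical scheme equals $Y_{P,e^h}^*$, so $\C[Z_h]/(\partial\mathcal F_h)\cong\C[Y_{P,e^h}^*]$.

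Next I would unwind what $\C[Y_{P,e^h}^*]$ is. By construction $Y_{P,e^h}^*=\YP^*\times_{T^{W_P}}\{e^h\}$, i.e. $\YPd^*$ for $d=e^h$, and as recalled in the discussion of Peterson's theory its non-reduced coordinate ring is the quantum cohomology ring $qH^*(G^\vee/P^\vee)$ with the quantum parameters $q_j$ (for $\alpha_j\in\Pi^P$) specialised to the values $\alpha_j(e^h)$. Since these values are nonzero, taking the fiber of $\YP^*=\Y\cap\RR$ over $e^h$ is the same as taking the corresponding fiber of the full stratum $\YP$, so this ring is precisely the specialisation at $e^h$ of Peterson's ring $\C[\YP]$. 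Finally, under the identification $\mathfrak h^{W_P}=H^2(G^\vee/P^\vee)$ the parameter $h$ is the class $[\omega_h]$, and $\alpha_j(e^h)=e^{\alpha_j(h)}$ is exactly the value of $q_j$ attached to $[\omega_h]$ in Peterson's presentation; chaining these three identifications yields the asserted isomorphism.

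Since this is a corollary of two quoted theorems, I do not expect a serious obstacle; the only point requiring care is the compatibility of conventions --- the translation of $Z$ by $\dot w_0$ relative to \cite{rietsch}, the identification of $Z_h$ with $\RR$, and above all the normalisation of $\mathfrak h^{W_P}=H^2(G^\vee/P^\vee)$, which must be fixed so that the exponentials $\alpha_j(e^h)$ are the quantum parameters associated with $[\omega_h]$ rather than their inverses. Once these are pinned down, the statement follows.
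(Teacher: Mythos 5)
Your proposal is correct and follows exactly the route the paper intends: the corollary is obtained by combining the theorem of \cite{rietsch} identifying the critical scheme of $\mathcal F_h$ with $Y_{P,e^h}^*$ and Peterson's identification of $\C[Y^*_{P,d}]$ (for $d=e^h$) with the quantum cohomology ring at the corresponding values of the quantum parameters, with the bookkeeping $\mathfrak h^{W_P}=H^2(G^\vee/P^\vee)$, $h\leftrightarrow[\omega_h]$, exactly as you describe. The paper gives no further argument beyond this chaining, so there is nothing to add.
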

 
In \cite{MarshRietsch}, R. Marsh and the second author gave an expression of the Landau-Ginzburg model of the Grassmannian in terms of Pl\"ucker coordinates and then described the A-model connection. Here we will express the Landau-Ginzburg model of the Lagrangian Grassmannian in terms of \emph{generalized Pl\"ucker coordinates}, i.e the coordinates of its minimal embedding $\OG(m+1,2m+1) \hookrightarrow \P(V_{\Spin})$.

\section{The Lagrangian Grassmannian and its LG model}

Let $G^\vee=\PSp_{2m}(\C)$, the adjoint group of type  $C_m$, with Dynkin diagram 
\[
 \begin{tikzpicture}[start chain]
 \dnode{1}
 \dnode{2}
 \dydots
 \dnode{}
 \dnodenj{m}
 \path (chain-4) -- node[anchor=mid] {\(\Leftarrow\)} (chain-5);
 \end{tikzpicture}\quad .
\]
Let $P^\vee:=P_{\omega_m^\vee}$ be the parabolic subgroup associated to the $m$-th fundamental weight $\omega_m^\vee$ of $G^\vee$. The quotient $G^\vee/P^\vee$ is the homogeneous space called the \emph{Lagrangian Grassmannian}, which parametrizes Lagrangian subspaces in $\C^{2m}$. It is also denoted by $X=\LG(m)$ and will play the role of the $A$-model for us. 

Now the Langlands dual group $G$ is the simply connected group of type $B_m$, namely the spin group $\Spin_{2m+1}(\C)$, 
\[
 \begin{tikzpicture}[start chain]
 \dnode{1}
 \dnode{2}
 \dydots
 \dnode{\ }
 \dnodenj{m}
 \path (chain-4) -- node[anchor=mid] {\(\Rightarrow\)} (chain-5);
 \end{tikzpicture} \quad .
\]
The parabolic subgroup of $\Spin_{2m+1}(\C)$ associated to the $m$-th fundamental weight is denoted $P=P_{\omega_m}$. In this (B-model) setting we  consider the quotient from the left $\check{X} := P \backslash G $. This quotient may be interpreted as the co-isotropic Grassmannian $ \OG(m+1,2m+1)$ in a vector space of row vectors. We consider it in its minimal embedding, namely the homogeneous space $\check{X} := P \backslash G$ is embedded in $\mathbb P(V_{\omega_m}^*)$ as right $G$-orbit of the highest weight vector $w_\emptyset^*$. We will express the mirror Landau-Ginzburg model to $LG(m)$  as a rational function on the orthogonal Grassmannian~$\check X$ in the homogeneous coordinates of this embedding.

\begin{remark} Note that the Lagrangian Grassmannian $X=LG(m)$ is a cominiscule homogeneous space of type $C_m$, and therefore its cohomology appears in geometric Satake correspondence \cite{lusztig,MV,ginzburg} as 
\[
 H^*(LG(m))=IH^*(\overline{Gr_G^{\omega_m}})=V_{\omega_m}^{\Spin_{2m+1}}.
\]
In other words it is canonically identified with the unique miniscule representation, the spin representation $V_{\omega_m}^{\Spin_{2m+1}}$ also denoted $V_{\Spin}$, of the Langlands dual group, $G=\Spin(2m+1)$. Therefore, essentially tautologically, $\mathbb P(V_{\Spin}^*)$ has homogeneous coordinates given by the Schubert basis of $H^*(LG(m))$. 
\end{remark}

\subsection{Notations and conventions}\label{s:notations}

Let $v_1,\dotsc, v_{2m+1}$ be the standard basis of $V=\C^{2m+1}$, and fix the symmetric non-degenerate bilinear form 
\[
\left<v_i,v_{2m+2-j}\right>=2\Phi(v_i,v_{2m+2-j})=(-1)^{m+1-i}\delta_{i,j}.
\]
We may also use the notation $\bar v_{j}=v_{2m+2-j}$ (with decreasing $j$) for the basis elements $v_{m+2},\dotsc, v_{2m+1}$ and set $\epsilon(i):=(-1)^{m+1-i}$ so that $\Phi(v_i,\bar v_i)=\epsilon(i)$. The subspace of $V$ spanned by the first $m$ basis vectors $v_1,\dotsc, v_m$ is maximal isotropic and denoted by $W$. 

We let $G=\Spin(V)=\Spin(V,\Phi)$, which is the universal covering group of $SO(V,\Phi)$. The Lie algebra of $G=\Spin(V)$ is therefore $\so(V)=\so(V,\Phi)$  which we view as lying in $\mathfrak gl(V)$. We have explicit Chevalley generators $e_i,f_i$ given by 
\begin{eqnarray*}
 e_i&=&E_{i,i+1}+E_{2m+1-i,2m+2-i}\quad \text{for $i=1,\dotsc, m-1$,}\\
 e_m&=&\sqrt{2}E_{m,m+1}+\sqrt{2}E_{m+1,m+2},\\
 f_i&=& e_i^T \quad \text{for $i=1,\dotsc, m$.}
\end{eqnarray*}
Here $E_{i,j}$ is the $(2m+1)\times (2m+1)$-matrix with $(i,j)$-entry $1$ and all other entries $0$. We also define the corresponding group homomorphisms $x_i:\C\to G$ and $y_i:\C\to G$, namely $x_i(a):=\exp(ae_i)$ and $y_i(a):=\exp(a f_i)$.

Next we introduce notations for the  Clifford algebra $\Cl(V)$ and the Spin representation $V_{\Spin}$, see also \cite{Varadarajan} whose conventions we follow for the most part. The Clifford algebra $\Cl(V)$ is the algebra quotient of the tensor algebra $T(V)$ by the ideal generated by the expressions
\[
 v\otimes v' + v'\otimes v - 2\Phi(v,v').
\]
So it is the algebra with generators $ v_{m+1}$ and $v_i,\bar v_i$ for $i=1,\dotsc, m$, with relations
\[
 v_i\bar v_i+\bar v_i v_i=\epsilon(i),\quad v_{m+1}^2=\frac 12,
\]
and where all other generators anti-commute. The Clifford algebra is $\Z / 2\Z$-graded, as the relations are in even degrees only, and the even part of $\Cl(V)$ is denoted by $\Cl^+(V)$.  

Since $\Spin(V)$ acts on $V$, it acts on  $\bigwedge^\bullet V$, and because it preserves the bilinear form $\Phi$, it also acts on $\Cl(V)$. The anti-symmetrization map 
\begin{eqnarray*}
 {\bigwedge}^k V&\to &\Cl(V)\\
 v_{i_1}\wedge\dotsc \wedge v_{i_k} &\mapsto & \frac{1}{k!}(\sum_{\sigma\in S_k}v_{i_{\sigma(1)}} v_{i_{\sigma(2)}}\cdots
 v_{i_{\sigma(k)}}).
\end{eqnarray*}
is an embedding of representations, and we will usually identify elements of ${\bigwedge}^k V$ with their images, as we are mainly interested in the algebra structure of the Clifford algebra. The representation $ {\bigwedge}^2 V$ is isomorphic to the adjoint representation. Moreover the image of $ {\bigwedge}^2 V$ in $\Cl(V)$ is indeed a Lie algebra under the commutator Lie bracket of $\Cl(V)$, and it is isomorphic to $\so(V)$ as such. In particular our generators $e_i, f_i$ can be identified with elements of $\bigwedge^2V$ and their images in $\Cl(V)$. Under this identification they are given by
\begin{eqnarray*}
  e_i=\epsilon(i+1) v_{i}\wedge \bar v_{i+1}&=&\epsilon(i+1) v_{i} \bar v_{i+1} \text{ for $i=1,\dotsc, m-1$}\\ 
 e_m={\sqrt 2}v_m\wedge v_{m+1}&=&{\sqrt 2}v_m v_{m+1},\\
  f_i=\epsilon(i) v_{i+1}\wedge \bar v_{i}&=&\epsilon(i)  v_{i+1}\bar v_{i} \text{ for $i=1,\dotsc, m-1$},\\ 
 f_m={\sqrt 2} \bar v_{m}\wedge v_{m+1}&=&{\sqrt 2}\bar v_m v_{m+1}. 
\end{eqnarray*}
Putting all of the anti-symmetrization maps together gives an isomorphism of $\so(V)$-modules
\[
\bigwedge{}^\bullet\, V\longrightarrow \Cl(V).
\]
Moreover the even wedge powers map to the even part $\Cl^+(V)$ of the Clifford algebra and odd ones to the odd part, $\Cl^-(V)$. Therefore we have two isomorphisms of $\so(V)$-modules
\begin{eqnarray}
 \label{e:evenPartsIso}
 \alpha_+:\bigwedge{}^{\text {even}} \ V &\longrightarrow& \Cl^+(V),\\
 \label{e:oddPartsIso}
 \alpha_-: \bigwedge{}^{\text {odd}} \ V &\longrightarrow& \Cl^-(V).
\end{eqnarray}

The Spin representation, as a vector space, is $V_{\Spin}=\bigwedge^\bullet W$. Its standard basis elements are the elements $\w_I:=v_{i_1}\wedge\dotsc \wedge v_{i_k}$ with $i_1<i_2<\cdots <i_k$, where $I=\{i_1,\dotsc, i_k\}$ is any subset of $\{1,\dotsc, m\}$. We sometimes write $[v_{i_1}\wedge\dotsc \wedge v_{i_k}]$ instead of $v_{i_1}\wedge\dotsc \wedge v_{i_k}$ when we mean the element of $V_{\Spin}$. Note that if  $I=\emptyset$ then $w_\emptyset=[1]$.  

The subsets $I$ are also in one-to-one correspondence with strict partitions $\lambda$ contained in an $m \times m$ square, by sending the empty set to the empty partition, and 
\[
 I=\{i_1,\dotsc, i_k\}\quad \mapsto\quad \lambda=(m+1-i_1,m+1-i_2,\dotsc, m+1-i_k).
\]
In this correspondence the $k$-row partitions correspond to the basis elements in the $k$-th graded component, $\bigwedge^k W$, of $V_\Spin$. We may denote $w_I$ also by $w_\lambda$.  If $\lambda$ is a strict partition contained in an $m\times m$ rectangle, then we denote by $|\lambda|$ the sum of all its parts and by $\PD(\lambda)$ the Poincar\'e dual partition. 

The Spin representation of $\so(V)$ extends to a representation of the Clifford algebra, which can be defined on generators by 
\[
 v_i\cdot w_{I}=v_i\wedge w_{I}, \quad v_{m+1}\cdot w_{I}=\frac{(-1)^{|I|}}{\sqrt 2} w_{I}, \quad  \bar v_j\cdot w_{I}=i_{\bar v_j}(w_I),
\] 
where $i_{\bar v_i}$ is the insertion operator on $\bigwedge^\bullet W$, for $\bar v_i$ identified with the linear form $2\Phi(\bar v_i, \quad )$ on $W$. 

We recall the important fact that the even subalgebra  $\Cl^+(V)$ of the Clifford algebra is isomorphic to $\End(V_{\Spin})$ via the action just defined. Combined with the map \eqref{e:evenPartsIso} we obtain an isomorphism of $\so(V)$-modules
\begin{equation}\label{e:EvenToEnd}
 \kappa_+:\bigwedge{}^{\text {even}}\ V\longrightarrow  \End(V_\Spin).
\end{equation}
Moreover there is also an isomorphism of $\mathfrak{so}(V)$-modules,
\begin{equation}\label{e:OddToEnd}
 \kappa_-:\bigwedge{}^{\text {odd}}\ V\longrightarrow  \End(V_\Spin)
\end{equation}
given by antisymmetrization, $\alpha_-: \bigwedge{}^{\text {odd}}\to\Cl^-(V)$ followed by the action of $\Cl^-(V)$ on $V_{\Spin}$.

The standard basis $\{w_I\}$ of $V_{\Spin}$ defined above is also precisely the integral weight basis obtained by successively applying generators $e_i$ to the lowest weight vector $\w_\emptyset=[1]$, and it agrees with the $MV$-basis of $V_\Spin$, which in this case is one and the same as the Schubert basis of $H^*(LG(m))$. We will use the notation $\sigma_\lambda$ for the Schubert basis element naturally identified with $\w_\lambda$.  

The generalized Pl\"ucker coordinates on our $\OG(m+1,2m+1)=P\backslash G$ are the sections of $\mathcal O(1)$ in the embedding $P\backslash G\hookrightarrow \P(V_\Spin^*)$ which are given by the basis elements $w_\lambda$ of $V_{\Spin}$ described above. Explicitly, we define   
\[
 \p_\lambda(g) := \langle \w_{\emptyset}^*\cdot g , \w_\lambda \rangle= \w_{\emptyset}^* (g \cdot \w_\lambda) ,
\]
where $\w_\emptyset^*$ is the dual basis vector to $\w_\emptyset$, which is therefore a highest weight vector of $V_{\omega_m}^*$, and where $\w_\lambda$ is as above. We may think of an element $Pg\in \OG(m+1,V^*)=P\backslash G$ as specified by its homogeneous coordinates $(\p_{\lambda_1}(g)\colon \p_{\lambda_2}(g)\colon \dotsc\colon \p_{\lambda_{2^m}}(g))$, where $\lambda_1,\dotsc, \lambda_{2^m}$ are the strict partitions in $m\times m$ in some ordering.

To summarize, associated to strict partitions $\lambda \subset m \times m$, or equivalently subsets $I$ of $\{1,\dotsc, m\}$, we have elements
\[
 \sigma_\lambda\in H^*(LG(m)), \quad w_\lambda\in V_{\Spin}, \quad\text{and}\quad \p_\lambda\in \Gamma[\mathcal O_{\OG(m+1,V^*)}(1)], 
\]
all canonically identified. We may also denote them by $\sigma_I,\w_I$ and $\p_I$, respectively.

For a later section we will also require an explicit isomorphism $V\cong V^*$. Since $V$ has on it a quadratic form, we have that $V\cong V^*$ by $v\mapsto \left<v,\ \right>$ and $V^*$ has basis $v_1^*,\dotsc, v_{m+1}^*,  v_{m+2}^*,\dotsc, v_{2m+1}^*.$
Under the isomorphism with $V$ this basis corresponds to 
\begin{equation*}
\begin{matrix}v_1^*=\epsilon(1)\bar v_1 &&v_{2m+1}^*=\bar v_1^*=\epsilon(1) v_1\\
 v_2^*=\epsilon(2)\bar v_2&&v_{2m}^*=\bar v_2^*=\epsilon(2) v_2\\
  \vdots  &&  \vdots \\
  v_m^*=-\bar v_m && v_{m+2}^*=\bar v_{m}^*=- v_m  \\
   &  v_{m+1}^*=v_{m+1}.&
\end{matrix}
\end{equation*}

\subsection{Definition of $W_t$}

We will now explain our formula for $W_t:\OG(m+1,V^*)\to \C$ in terms of the coordinates $\p_\lambda$. Here are some particular partitions which will play an important role. Let $\rho_l := (l,l-1,\dots,2,1)$ be the length $l$ staircase partition and let $\mu_l := (m,m-1,\dots,m+1-l)$ be the maximal strict partition with $l$ lines contained in an $m\times m$ rectangle. For $\rho_l$ with $l<m$ there is a unique strict partition obtained by adding a single box to the Young diagram. It is obtained by adding one box to the first line, and we denote it by $\rho_{l,+}$. 
If $J$ is any subset of $\left\{1,\dots,l\right\}$, we denote by $\rho_l^{J}$ the partition obtained after removing for every $j\in  J$ the $j$-th line from the Young diagram of $\rho_l$ (and similarly for $\rho_{l,+}^{J}$). On the other hand we denote by $\mu_l^J$ the partition obtained by adding for each $j\in J$ a row of $l+1-j$ boxes to the bottom of $\mu_l$. Similarly, $\mu_{l,+}^{J}$ is obtained by adding for each $j\in J$ a row of $l+1-j+\delta_{j,1}$ boxes to the bottom of $\mu_l$. If the resulting Young diagram does not give a strict partition, then we set $\mu_l^{J}=0$, respectively $\mu_{l,+}^{J}=0$. Finally, set  $s(J):= \sum_{j \in J} j$ for any subset $J$ of $\{1,\dotsc, m\}$. 

Using the above notations, we define $W_t:\OG(m+1,V^*)\to \C$ by
\begin{equation}\label{e:Wt}
 W_t := \frac{\p_{\rho_{0,+}}}{\p_{\rho_0}} + \sum_{l=1}^{m-1} \frac{\underset{J\subset\left\{1,\dots,l\right\}}{\sum}(-1)^{s(J)}\p_{\rho_{l,+}^{J}}\p_{\mu_{l,+}^{J}}}{\underset{J\subset\left\{1,\dots,l\right\}}{\sum}(-1)^{s(J)}\p_{\rho_l^{J}}\p_{\mu_l^J}} + e^t \, \frac{\p_{\rho_{m-1}}}{\p_{\rho_m}}. 
\end{equation}
This is a rational function on $\check X=\OG(m+1,V^*)$. Inside $\check X$ the denominators in $W_t$ give rise to divisors
\[
 D_0:=\{\p_{\emptyset}=0\} ,\quad
 D_m:=\{\p_{\rho_m}=0\}
\]
and
\[
 D_l:= \left\{ \underset{J\subset\left\{1,\dots,l\right\}}{\sum}(-1)^{s(J)}\p_{\rho_l^{J}}\p_{\mu_l^J}=0\right \},\quad \text{where $l=1,\dotsc, m-1$.}
\]
Then 
\[
 D:=D_0+D_1+\dotsc + D_{m-1}+ D_m
\]
is an anti-canonical divisor. Indeed, the index of $\check X=\OG(m+1,V^*)$ is $2m$. We define $\check X^\circ:=\check X\setminus D$. The restriction of our rational function $W_t$ to $\check X^\circ$ is regular, and is again denoted $W_t$.

We would like to compare $W_t:\check X^\circ\to \C$ with the known super-potential of $X=LG(m)$ defined as a special case of \eqref{e:oldLG}. Explicitly recall that $LG(m)=G^\vee/P^\vee$ for $ G^\vee=PSp(2m) $ with $P^\vee$ the parabolic corresponding to the $m$-th node of the Dynkin diagram $C_m$. The function $\mathcal F_h$ for $h\in \mathfrak h^{W_P}$ is  therefore defined on the open Richardson variety $\RR=B_+w_P B_-\cap B_- \dot w_0 B_-/B_-$ inside the full flag variety of $G=\Spin(V)$, where $P$ is the parabolic corresponding to the $m$-th node of $B_m$. So we would like to relate our variety  $\check X=P\backslash G=\OG(m+1,V^*)$, or rather its open part $\check X^\circ$, with this open Richardson variety. 
The parameter $t$ in $W_t$  and the $h\in \mathfrak h^{W_P}$ appearing in $\mathcal F_h$ should be thought of as equivalent,  by the relation $h=t \omega_m^\vee$. 

For fixed parameter $t$ we define the following maps
\begin{eqnarray*}\label{e:DomainComparison}
 \OG(m+1,V^*)=P\backslash G\ \overset{\Psi_L}  \longleftarrow   &B_-\dot w_0\cap U_+ e^{t\omega_m^\vee}\dot w_P\dot U_- &  \overset{\Psi_R}{\longrightarrow}\  \RR,\\
 P g\ \longleftarrow\ & g & \rightarrow g B_-.
\end{eqnarray*}
given by taking left and right cosets, respectively. Note that $g=b_-\dot w_0$ in our previous notation and factorizes as
\[
 g=u_1e^{t\omega_m^\vee}\dot w_P\bar u_2,
\]
Moreover $\Psi_R$ is an isomorphism, so we have $\Psi:=\Psi_L\circ\Psi_R\inv:\RR\to \OG(m+1,V^*)$.
 Our main goal here is to prove the theorem. 

\begin{theorem}
\label{theo:W}
 Let $X=LG(m)$ and $t\in \C$. The rational function $W_t$ on  $\OG(m+1,V^*)$ defined in \eqref{e:Wt} pulls back under $\Psi=\Psi_L\circ\Psi_R\inv$ to the Landau-Ginzburg model $\mathcal F_h$ from Theorem \ref{theo:LGmodel-G/P}, where $h$ and $t$ are related by  $h=t\omega^\vee_m$. 
\end{theorem}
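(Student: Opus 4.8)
The plan is to reduce the statement to an explicit computation of $\mathcal F_h$ in suitable coordinates on $\RR$, matching it term by term with the formula \eqref{e:Wt}. First I would parametrize the open Richardson variety $\RR$ concretely: an element $u_1\dot w_P B_-\in\RR$ is determined by $u_1\in U_+$ modulo the stabilizer of $\dot w_P B_-$, and since $P$ corresponds to the $m$-th node of $B_m$, the coset space $W^P$ is indexed by strict partitions $\lambda\subset m\times m$, so a reduced expression for $w^P$ gives a distinguished minimal factorization $u_1 = x_{i_1}(a_1)\cdots x_{i_N}(a_N)$ with $N=\dim\RR = \binom{m+1}{2}$. I would pin down such a reduced word for $w^P$ — most naturally the one refining the chain $\rho_0\subset\rho_1\subset\cdots\subset\rho_m$ of staircases — because the partitions $\rho_l$, $\rho_{l,+}$, $\mu_l$, $\mu_{l,+}$ appearing in $W_t$ are exactly the ones that show up along this chain, and the factors $e_i^*(u_1)$, $f_i^*(\bar u_2)$ will then be read off against Plücker coordinates indexed by these partitions.

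Next I would translate both sides into the Clifford-algebra / spin-representation picture set up in Section \ref{s:notations}. The key computational identities are (i) $\p_\lambda(g) = \w_\emptyset^*(g\cdot\w_\lambda)$ and the action of the Chevalley generators $e_i,f_i$ (realized as $v_i\bar v_{i+1}$ etc.\ in $\Cl(V)$) on the standard basis $\{\w_I\}$ of $V_\Spin=\bigwedge^\bullet W$; and (ii) the defining relation $\bar u_2\cdot B_+ = e^{-h}\dot w_P\inv u_1\inv\cdot B_-$ linking $u_1$ and $\bar u_2$, together with the fact that $h = t\omega_m^\vee$ only feeds into the single simple root $\alpha_m\in\Pi^P$, hence produces the lone factor $e^t$ in the last summand. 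Using these I would express $e_i^*(u_1)$ as a ratio of two polynomials in the $\p_\lambda(g)$: the numerator and denominator are matrix coefficients $\w_\emptyset^*\big(g\cdot(\text{something})\big)$ of $g$ acting on explicit vectors of $\bigwedge^\bullet W$, and the combinatorics of "removing/adding the $j$-th row" encoded by the subsets $J\subset\{1,\dots,l\}$ is precisely the combinatorics of expanding $u_1^{-1}$ (a product of $x_i(a)$'s) acting on $\w_{\rho_l}$ or $\w_{\mu_l}$ and collecting terms by which factors actually contribute. The signs $(-1)^{s(J)}$ should come out of the Clifford-algebra anticommutation relations (each $\bar v_j$ past-commuted through $j-1$ wedge factors contributes a sign), which is the natural source of $\sum_{j\in J} j$.

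The middle summands of $W_t$ — one for each $l=1,\dots,m-1$, with quadratic numerator and denominator — are the genuinely new phenomenon, and matching them is where I expect the real work to be. Concretely, I would show that for the chunk of the reduced word of $u_1$ sitting "between level $l$ and level $l+1$" of the staircase chain, the two simple-reflection contributions $e_{i}^*(u_1)$ coming from that chunk combine (after clearing the common denominator coming from the Toda/$Z$-factorization constraint) into the single ratio $\big(\sum_J (-1)^{s(J)}\p_{\rho_{l,+}^J}\p_{\mu_{l,+}^J}\big)/\big(\sum_J (-1)^{s(J)}\p_{\rho_l^J}\p_{\mu_l^J}\big)$; the appearance of a \emph{product} of two Plücker coordinates, rather than a single one as in the type-$A$ Grassmannian case of \cite{MarshRietsch}, is forced by the index mismatch between $X=LG(m)$ and $\check X=\OG(m+1,2m+1)$ noted in the introduction, and I would track it through the fact that $V_\Spin$ is not self-dual so $\Psi_L$ and $\Psi_R$ land on genuinely different-looking coordinates. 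I would organize this by induction on $m$ (or on $l$), peeling off the $l=0$ term $\p_{\rho_{0,+}}/\p_{\rho_0}$ and the $e^t$-term first, and reducing the middle block to a statement about $\OG$ in one lower rank. The main obstacle, then, is the bookkeeping lemma identifying the Clifford-algebra matrix coefficient $\w_\emptyset^*\big(g\cdot(\text{wedge of basis vectors})\big)$ with the signed sums over $J$; once that dictionary is established, well-definedness of $\mathcal F_h$ (Theorem of \cite{rietsch}) guarantees the answer is independent of the chosen reduced word, and comparing the critical-point schemes via Corollary \ref{theo:LGmodel-G/P} gives a consistency check on the signs and normalizations (in particular the $\sqrt 2$'s in $e_m,f_m$, which must cancel in the final rational function).
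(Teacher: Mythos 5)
Your overall frame (pull back to $Z_h$, write $\mathcal F_h=\sum_i e_i^*(u_1)+\sum_i f_i^*(\bar u_2)$, factor along a reduced word of $w^P$, get the lone $e^t$ from $\alpha_m(e^h)$) agrees with the paper's first step, but the term-by-term matching you plan to carry out is misidentified, and the decisive step is missing. On $Z_h$ one has $e_i^*(u_1)=0$ for all $i\neq m$ (equation \eqref{eq:ei2first}, proved by the weight argument of Lemma \ref{l:ei} using $\bar u_2\in B_+(\dot w^P)^{-1}B_+$), so there are no ``two $e_i^*(u_1)$ contributions from the chunk between level $l$ and $l+1$'' to combine: each middle summand of \eqref{e:Wt} is a single term $f_j^*(\bar u_2)$, $1\leq j\leq m-1$, while the first summand is $f_m^*(\bar u_2)$ and the $e^t$-summand is $e_m^*(u_1)$. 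Your plan for what you yourself call the real work therefore starts from a wrong identification and cannot be executed as stated. Relatedly, since $u_1e^{h}\dot w_P\in P$, the coset $\Psi_L(g)$ equals $P\bar u_2$, so the factorization one should take along the reduced word of $w^P$ is that of $\bar u_2\in U_-\cap B_+(\dot w^P)^{-1}B_+$ as in \eqref{e:u2barfactb}, not of $u_1$.

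The second, more serious gap is that your ``bookkeeping lemma'' identifying the signed quadratic sums with matrix coefficients is exactly where the content of the theorem lies, and the proposal offers only sign heuristics and an unspecified induction for it. The paper's route is: (i) show that $f_j^*(\bar u_2)$ is a ratio of $(m+1)\times(m+1)$ minors of $\bar u_2$ in the vector representation (equation \eqref{eq:fj}, proved via a vanishing-minor argument and expansion along the $(j+1)$-st row); and (ii) prove Proposition \ref{p:SymToMinor} (via Proposition \ref{p:DenomProj}): the elements $\D_{(j)},\N_{(j)}\in\Sym^2(V_\Spin)$ map to $v^{\wedge}_{(j)},v^{\wedge}_{(j),+}$ under the explicitly constructed surjection \eqref{e:symtowedge}, built from the Clifford-algebra isomorphisms $\kappa_\pm$, the contraction $c$ and the duality $d$, so that the quadratic numerators and denominators of $W_t$ evaluate to precisely those minors. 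Note also that your explanation of the quadratic terms via ``$V_\Spin$ is not self-dual'' is incorrect: $V_\Spin$ is self-dual (Lemma \ref{l:dual}); the quadratic terms occur because $\bigwedge^{m+1}V\cong V_{2\omega_m}$ sits inside $\Sym^2(V_\Spin)$, i.e.\ the relevant minors are sections of $\mathcal O(2)$ in the minimal embedding, which is the index mismatch phenomenon correctly alluded to in the introduction. Without (i) and (ii), or a worked-out substitute for them, the proposal does not prove the theorem.
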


The theorem implies that $\Psi$ maps $\RR$ to $\check X^\circ$. We also expect the following
Claim which we aim to prove in a future version of this paper. 
\begin{cl}
\label{conj:iso}
 $\Psi$ defines an isomorphism from $\RR$ to $\check X^\circ$.
\end{cl}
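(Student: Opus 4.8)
The plan is to reduce Claim~\ref{conj:iso} to the bijectivity of $\Psi$ and then establish bijectivity by an explicit analysis in the Langlands dual group $G=\Spin(V)$, following \cite{MarshRietsch} in type~$A$. As noted in the excerpt, $\Psi_R$ is an isomorphism, so $\Psi=\Psi_L\circ\Psi_R\inv$ is a morphism, and its image lies in $\check X^\circ$ as noted just after Theorem~\ref{theo:W}. Now $\RR=R_{w_P,w_0}$ is a smooth irreducible variety of dimension $\ell(w_0)-\ell(w_P)=m^2-\binom m2=\binom{m+1}2$, while $\check X^\circ$ is dense open in the smooth projective variety $\OG(m+1,2m+1)$ of the same dimension, hence is itself smooth, irreducible, and in particular normal. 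It therefore suffices to prove that $\Psi$ is bijective: any bijective morphism $f$ from an irreducible variety to a normal variety over $\C$ is an isomorphism, since $f$ is quasi-finite and, being bijective between irreducible varieties of equal dimension in characteristic $0$, dominant and birational, so by Zariski's Main Theorem $f$ is an open immersion, and being surjective it is an isomorphism.

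For injectivity, let $g,g'\in Z_h$ with $Pg=Pg'$, and write $g=u_1e^{t\omega_m^\vee}\dot w_P\bar u_2=b_-\dot w_0$ (and similarly for $g'$). Then $p:=g(g')\inv\in P$ while also $p=b_-(b_-')\inv\in B_-$; since $U_P\cap B_-=\{e\}$ for the unipotent radical $U_P$ of $P=P_{\omega_m}$, this forces $p\in L\cap B_-$ with $L$ the standard Levi factor of $P$, a group of type $A_{m-1}$. Writing $v:=\bar u_2(\bar u_2')\inv\in U_-$ we have $p=u_1\,e^{t\omega_m^\vee}(\dot w_Pv\dot w_P\inv)e^{-t\omega_m^\vee}(u_1')\inv$, and since $w_P\in W_P$ normalises $L$, $U_P$, the opposite radical $U_P^-$ and the torus, with $w_P(R^-\setminus R_P^-)=R^-\setminus R_P^-$ and $w_P(R_P^-)=R_P^+$, the factor $\dot w_Pv\dot w_P\inv$ splits into a $U_P^-$-part and a $U_+^L$-part; working in the big cell $U_P^-LU_P$ and imposing $p\in L\cap B_-$ kills the $U_P^-$-part, so that $v\in U_-^L$. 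Feeding $v\in U_-^L$ into the relation $\bar u_2\cdot B_+=e^{-h}\dot w_P\inv u_1\inv\cdot B_-$ of the excerpt gives $(u_1')\inv\cdot B_-=\tilde v\,u_1\inv\cdot B_-$ with $\tilde v\in U_+^L$; since $u_1,u_1',\tilde v$ all lie in $U_+$, the ambiguity here is an element of $U_+\cap B_-=\{e\}$, whence $u_1\inv u_1'\in U_+^L$ and therefore $u_1\dot w_PB_-=u_1'\dot w_PB_-$. As $gB_-=b_-\dot w_0B_-=u_1\dot w_PB_-$ (and likewise for $g'$), this is exactly the injectivity of $\Psi$; combined with the first paragraph, $\Psi$ is then an open immersion of $\RR$ onto an open subvariety of $\check X^\circ$.

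Surjectivity onto $\check X^\circ$ is the main obstacle. Given $\xi\in\check X^\circ$ one must exhibit $\tilde g\in G$ with $P\tilde g=\xi$, $\tilde g\in B_-\dot w_0$ and $\tilde g\in U_+e^{t\omega_m^\vee}\dot w_PU_-$, for then $\xi=\Psi_L(\tilde g)=\Psi(\tilde gB_-)$. The non-vanishing $\p_{\rho_m}(\xi)\neq0$ (that is, $\xi\notin D_m$) puts $\xi$ in the big cell of $\check X=P\backslash G$, on which there is an affine chart $\cong\mathbb A^{\binom{m+1}2}$ adapted to the Plücker coordinates; I expect the remaining conditions $\xi\notin D_0,\dots,D_{m-1}$ to be precisely what makes it possible to solve for a representative of $\xi$ of the form $\tilde g=u_1e^{t\omega_m^\vee}\dot w_P\bar u_2$ that at the same time lies in $B_-\dot w_0$. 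Concretely I would work in the Clifford-algebra model of Section~\ref{s:notations}, expand $\p_\lambda(\tilde g)$ in the entries of $u_1$ and $\bar u_2$, and solve for those entries by induction on the stair-length $l$: at step $l$ the polynomial $\sum_{J\subset\{1,\dots,l\}}(-1)^{s(J)}\p_{\rho_l^J}\p_{\mu_l^J}$ defining $D_l$ should emerge as the determinant controlling the solvability of the $l$-th block of equations, so that $\xi\notin D_l$ yields a unique solution, which one checks automatically satisfies the $Z_h$-constraints, giving $\tilde g\in Z_h$. Identifying the divisors $D_l$ exactly as these obstruction loci is where the real work lies; unlike the Grassmannian case of \cite{MarshRietsch}, here the elimination has to be carried out inside the spin representation, and the appearance of the partitions $\rho_l^J,\mu_l^J$ reflects its genuinely quadratic, two-step nature, matching the way the index $2m$ of $\check X$ is distributed as $1+2+\dots+2+1$ over the divisors $D_0,\dots,D_m$. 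Together with the injectivity above, solvability and uniqueness complete the proof that $\Psi\colon\RR\to\check X^\circ$ is an isomorphism.
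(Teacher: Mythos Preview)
The paper does not prove Claim~\ref{conj:iso}. Immediately before the claim the authors write ``We also expect the following Claim which we aim to prove in a future version of this paper,'' and no argument is given. So there is no proof in the paper to compare your proposal against.

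As for your proposal itself, it is a reasonable outline but not a proof. The reduction in your first paragraph (bijectivity plus Zariski's Main Theorem) is fine, and the fact that $\Psi$ lands in $\check X^\circ$ is indeed recorded in the paper as a consequence of Theorem~\ref{theo:W}. Your injectivity argument is plausible but several steps are only gestured at: the passage from $p\in L\cap B_-$ and the factorisation $p=u_1\,e^h(\dot w_Pv\dot w_P^{-1})e^{-h}(u_1')^{-1}$ to the conclusion $v\in U_-^L$ needs a clean uniqueness argument in the big cell (you write ``working in the big cell \dots kills the $U_P^-$-part'' without actually carrying this out), and the subsequent step invoking the conjugation relation for $\bar u_2$ and $u_1$ to deduce $u_1\dot w_PB_-=u_1'\dot w_PB_-$ is asserted rather than checked.

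The real gap is surjectivity. Your third paragraph is explicitly programmatic: ``I expect the remaining conditions \dots to be precisely what makes it possible to solve,'' ``I would work in the Clifford-algebra model \dots and solve for those entries by induction,'' and ``Identifying the divisors $D_l$ exactly as these obstruction loci is where the real work lies.'' None of this is carried out. Showing that the nonvanishing of each quadratic expression $\sum_J(-1)^{s(J)}\p_{\rho_l^J}\p_{\mu_l^J}$ is exactly the obstruction at step $l$ of an inductive construction of $\tilde g\in Z_h$ is the entire content of the claim, and it is precisely what the authors defer to a future version. Your proposal therefore does not go beyond the paper: it restates the expectation and sketches a strategy, but the substantive identification of the $D_l$ with the solvability conditions remains to be done.
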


Let $h=t\omega_m^\vee $ as in the theorem, and define $Z_h:=B_-\dot w_0\cap U_+ e^{h}\dot w_P\dot w_0\inv U_- $.
The super-potential $\mathcal F_h$ pulls back under $Z_h\to G/B_-$ to $\tilde{\mathcal F}_h:Z_h\to \C$ where
\[
 \tilde{\mathcal F}_h(u_1e^{h}\dot w_P \bar u_2)=\sum_{i=1}^m e_i^*(u_1)+\sum_{i=1}^m f_i^*(\bar u_2).
\]
To prove the theorem we need to show  that  $W_t$ pulls back to $\tilde{\mathcal F}_h$ under $Z_h\dot w_0 \overset{\Psi_L}  \longrightarrow P\backslash G=\OG(m+1,2m+1)$. We will do this in two steps.  

We consider two related projective embeddings of $\check X=\OG(m+1,V^*)$, the standard one corresponding to $\bigwedge^{m+1}V^*=V^*_{2\omega_m}$, and the minimal one corresponding to the (right) representation $V^*_\Spin=V^*_{\omega_m}$ of $G=\Spin(V)$ composed with its Veronese embedding. So  
\begin{eqnarray*}
 \pi_1:P\backslash G&\hookrightarrow &\P(\bigwedge{}^{m+1}\ V^*),\\
 Pg &\mapsto & \left <v_{m+1}^*\wedge v_{m+2}^*\wedge\cdots\wedge v_{2m+1}^*\cdot g\right>,\\
 \pi_2:P\backslash G&\hookrightarrow &\P(\Sym^2(V_\Spin^*)), \\
 Pg &\mapsto & \left <(w_\emptyset^*\cdot w_\emptyset^*) \cdot g\right>.
\end{eqnarray*}
The interesting numerators and denominators in $W_t$ are made up of sections in $\Gamma[\mathcal O_{\P(\Sym^2 (V_\Spin^*))}(1)]=\Sym^2 (V_\Spin)$. However the pullback of $\tilde{\mathcal F}_h$ to $\check X$ is not easy to reformulate directly in those terms. It can be  more easily expressed in terms of sections in $ \Gamma[\mathcal O_{\P(\bigwedge^{m+1}V^*)}(1)]=\bigwedge^{m+1}V$, which correspond to $(m+1)\times  (m+1)$-minors. The two embeddings are however related by an embedding of projective spaces coming from the inclusion of representations
\[
 \bigwedge{}^{m+1}\ V^*\hookrightarrow \Sym^2(V_\Spin^*),
\]
Therefore dually we have a surjection of representations
\begin{equation}\label{e:symtowedge}
 \Sym^2 (V_\Spin) \twoheadrightarrow \bigwedge{}^{m+1}V,
\end{equation}
which is the restriction map  $\Gamma[\mathcal O_{\P(\Sym^2(V_\Spin^*))}(1)] \to \Gamma[\mathcal O_{\P(\bigwedge^{m+1}V^*)}(1)]$.

The first step of the proof of the theorem is to express $\tilde{\mathcal F}_h$  in terms of $(m+1)\times  (m+1)$-minors. The second step involves the explicit construction of the above restriction map, and showing that the degree $2$ numerators and denominators in our formula for $W_t$ go to the minors appearing in the first step. From this we go on to deduce that $\psi_L^*W_t$ agrees with $\tilde{\mathcal F}_{h}$.  

\subsection{A formula for $\tilde{\mathcal F}_h$ in terms of minors}\label{s:minors}

\begin{defn} 
 If $g\in \Spin(V)$ we consider it as  acting from the right on $\bigwedge^n V^*$ and from the left on $\bigwedge^n V$ for any $n=1,\dotsc, 2m+1$. The bases $\{v_i^*\}$ and $\{v_i\}$ give rise to bases of $\bigwedge^n V^*$ and $\bigwedge^n V$, and we use the following notation for the matrix coefficients (minors of $g$ acting in the representation $V$). Let $I=\{i_1<\dotsc< i_r\}$ be a set indexing rows, and $J=\{j_1<\dots < j_r\}$ a set indexing columns, then
 \[
  \Delta^{I}_J(g):=\langle v^*_{i_1}\wedge \cdots \wedge v_{i_r}^* \cdot g\ ,\ v_{j_1}\wedge\cdots \wedge v_{j_r} \rangle.
 \]
\end{defn}

We begin by arguing that $\bar u_2$ appearing in  $u_1 e^h \dot{w_P}  \bar u_2 \in Z_h$ can be assumed to lie in $U_-\cap B_+(\dot w^P)\inv B_+ $. This is because we have two birational maps
\begin{eqnarray*}
 \Psi_1: U_-\cap B_+(\dot w^P)\inv B_+ \to P\backslash G: & \bar u_2\mapsto P\bar u_2, &\\
 \Psi_2: B_-\cap U^+e^h\dot w^P U_- \to P\backslash G: & b_-=u_1e^h\dot w_P\bar u_2\mapsto P b_-, &
\end{eqnarray*}
which compose to give $\Psi_1\inv\circ \Psi_2:b_-\mapsto \bar u_2$. This gives a birational map 
\[
 \Psi_1\inv\circ \Psi_2: Z_h\to  U_-\cap B_+(\dot w^P)\inv B_+ .
\] 
Now a generic element $\bar u_2$ in $U_-\cap B_+(\dot w^P)\inv B_+$ can be assumed to have a particular factorisation. 
Let $N:=\binom{m+1}{2}$. The smallest representative $w^P$ in $W$ of $[w_0] \in W/W_P$ has the following reduced expression :
\[
 w^P = (s_m)(s_{m-1} s_m) \dots (s_1 s_2 \dots s_m)=s_{i_1}\dots s_{i_{N}},
\]
It follows that as a generic element of $U_-\cap B_+(\dot w^P)\inv B_+$, the element $\bar u_2$ can be assumed to be written as:
\[
 \big (y_{m}(a_{m,m})    y_{m-1}(a_{m-1,m}) \dots y_{1}(a_{1,m})\big ) \dotsc 
 \big(y_{m}(a_{m,2}) y_{m-1}(a_{m-1,2})\big)y_m(a_{m,1}).
\]
where $a_{i,j}\ne 0$, or equivalently as
\begin{equation}\label{e:u2barfactb}
 \bar u_2=y_{m}(b_{N})    \dots y_{2}(b_{N-m+2}) y_{1}(b_{N-m+1})\dotsc 
 y_{m}(b_3)y_{m-1}(b_2)y_m(b_1).
\end{equation}
with nonzero $b_i$. Note that the $k$-th factor here is $y_{i_{N-k+1}}(b_{N-k+1})$.

We may think of the Pl\"ucker coordinate $\p_\lambda$ as a function on $G$. Then we have the following standard expression for the $\p_{\lambda}$ on factorized elements. 
%
\begin{lemma} \label{l:bislemma} 
 Fix $\lambda$  a strict partition in an $m\times  m$ square, and $w\in W^P$ the corresponding Weyl group element. Note that the length $\ell(w)$ equals $|\lambda|$.  
 Then if $\bar u_2$ is of the form \eqref{e:u2barfactb} we have
 \[
  \p_{\lambda}(\bar u_2)=\sum_{J} b_{{j_1}}\dotsc b_{{j_m}}.
 \]
 where the sum is over subsets $J=\{j_1<j_2<\dotsc < j_m\}$ of $\{1,\dotsc, N\}$  for which $s_{i_{j_1}}\dotsc s_{i_{j_m}}$ is a reduced expression of $w$.
\end{lemma}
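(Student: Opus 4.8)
The statement is a standard expression for a generalized Plücker coordinate evaluated on a factored unipotent element, of the kind that appears (for ordinary Grassmannians) in \cite{MarshRietsch}. I would prove it by reducing the computation of $\p_\lambda(\bar u_2)=\w_\emptyset^*(\bar u_2\cdot\w_\lambda)$ to a combinatorial expansion of the action of the product of root subgroups on the highest weight vector $\w_\lambda$ of the spin representation $V_\Spin$. The key point is that $\bar u_2$ acts on $V_\Spin$ by a product of the operators $y_{i_{N-k+1}}(b_{N-k+1})=\exp(b_{N-k+1}f_{i_{N-k+1}})$, and since $f_i$ is locally nilpotent on $V_\Spin$ we may expand each exponential. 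The vector $\w_\lambda$ has weight $w\cdot(\text{lowest weight})$, and $\w_\emptyset$ is the lowest weight vector; so in the product of power series only the terms dropping the weight all the way down to the lowest weight can contribute, and each individual $\exp(bf_i)$ can contribute at most a single power of $f_i$ without overshooting. This forces the contributing monomials to be exactly products $b_{j_1}\cdots b_{j_m}$ with $j_1<\dots<j_m$ such that $f_{i_{j_m}}\cdots f_{i_{j_1}}$ (applied in the appropriate order) carries $\w_\lambda$ to a nonzero multiple of $\w_\emptyset$; and the length count $\ell(w)=|\lambda|=m$...

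Wait, let me reconsider — $\ell(w)=|\lambda|$, not $m$; the sum is over $m$-element subsets, which is consistent because $w^P$ has length $N=\binom{m+1}{2}$ and $|\lambda|$ ranges appropriately. Let me restate more carefully.

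\textbf{Plan.} This is a standard type of formula, computing a generalized Plücker coordinate on a factored unipotent element, analogous to the Plücker-coordinate expansions in \cite{MarshRietsch}. I would compute $\p_\lambda(\bar u_2)=\w_\emptyset^*\bigl(\bar u_2\cdot \w_\lambda\bigr)$ by expanding the action of $\bar u_2$ on the spin representation $V_\Spin$. Since each factor $y_{i}(b)=\exp(bf_i)$ acts on $V_\Spin$ by an operator that is a finite sum $\sum_{k\ge0}\frac{b^k}{k!}f_i^k$, we get $\bar u_2\cdot\w_\lambda$ as a sum of monomials $b_{j_1}\cdots b_{j_r}\cdot\bigl(\text{products of }f_i\text{'s applied to }\w_\lambda\bigr)$, where the multiset $\{j_1,\dots\}$ records which exponential factors contribute and with what multiplicity, read in the correct order dictated by \eqref{e:u2barfactb}. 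The first reduction is that in $V_\Spin$ every simple $f_i$ acts with $f_i^2=0$ on any weight vector (because the $i$-th $\mathfrak{sl}_2$-string through any weight of a minuscule representation has length at most $2$), so no factor contributes a power higher than $1$, and the contributing subsets $J$ are honest sets $j_1<\dots<j_r$.

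\textbf{Key steps.} First, using that $V_\Spin$ is minuscule, pin down that $f_{i_{j_r}}\cdots f_{i_{j_1}}\cdot\w_\lambda$ (applied in the order in which the factors appear acting on $\w_\lambda$) is nonzero if and only if the corresponding product of simple reflections $s_{i_{j_1}}\cdots s_{i_{j_r}}$ sends the weight of $\w_\lambda$ to the lowest weight, and in that case it equals $\pm\w_\emptyset$; for the coefficient to survive after pairing with $\w_\emptyset^*$ we need it to land exactly on $\w_\emptyset$. Second, match this up with the Weyl group combinatorics: $\w_\lambda$ has weight $w^{-1}w_0\cdot(\text{highest weight})$ with $w\in W^P$ the element attached to $\lambda$, so reaching the lowest weight using simple reflections $s_{i_{j_1}},\dots,s_{i_{j_r}}$ (in order) is possible iff $r=\ell(w)=|\lambda|$ and $s_{i_{j_1}}\cdots s_{i_{j_r}}$ is a reduced word for $w$ — here I would invoke the standard fact that in a minuscule representation the weights are in bijection with $W/W_P$ and the $\mathfrak{sl}_2$-action along a reduced word realizes the corresponding Bruhat chain, with each step nonzero and each nonzero step contributing coefficient $\pm1$... but the lemma has no signs.

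\textbf{Main obstacle.} The crux, and the part I expect to require the most care, is the \emph{sign/normalization} bookkeeping: showing that each surviving monomial contributes coefficient exactly $+1$ (not $\pm1$), given the specific Chevalley generators and the $\sqrt2$-normalizations at the $m$-th node, the explicit Clifford-algebra action $v_i\cdot\w_I=v_i\wedge\w_I$, $\bar v_j\cdot\w_I=i_{\bar v_j}\w_I$, and the chosen reduced word $w^P=(s_m)(s_{m-1}s_m)\cdots(s_1\cdots s_m)$. This amounts to checking that for a reduced word of $w\in W^P$ the telescoping product $\exp(b_{j_r}f_{i_{j_r}})\cdots\exp(b_{j_1}f_{i_{j_1}})\cdot\w_\lambda$ picks up coefficient $b_{j_1}\cdots b_{j_r}$ with no sign — which I would verify by an induction on $\ell(w)$, peeling off the last letter and tracking how insertion/wedging against the ordered basis of $W$ interacts with the $\epsilon(i)$'s; the claim is precisely that the particular bilinear form and Clifford conventions fixed in Section~\ref{s:notations} are set up so that all these signs cancel. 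A convenient alternative for the sign-free conclusion is to note that the left-hand side $\p_\lambda(\bar u_2)$ is, by the theory of total positivity / the parametrization of the big cell, a subtraction-free polynomial in the $b_i$ with all coefficients in $\{0,1\}$, so once one knows which monomials appear the coefficients must be $1$; but I would still want the direct check to make the proof self-contained.
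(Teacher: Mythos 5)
Your proposal is correct and follows essentially the same route as the paper: the paper also writes $\p_\lambda(\bar u_2)=\w_\emptyset^*(\bar u_2\cdot \w_\lambda)$, expands the factorized $\bar u_2$, and uses that $\w_\lambda=e_{i_{j_1}}\cdots e_{i_{j_m}}\cdot \w_\emptyset$ (coefficient exactly $1$) for any reduced word of $w$, so precisely the monomials $b_{j_1}\cdots b_{j_m}$ you describe survive. The sign/normalization check you flag as the main obstacle is exactly the fact the paper has already built into its conventions in Section~\ref{s:notations} (the standard basis $\{\w_I\}$ is by construction the basis obtained by applying the $e_i$ to $\w_\emptyset$ along reduced words), which is why the paper's proof can be so short.
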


\begin{proof} 
 Recall that by definition $\p_{\lambda}(\bar u_2)=\left<w_\emptyset^* \cdot  \bar u_2, w_{\lambda}\right>=w_\emptyset^* ( \bar u_2\cdot  w_{\lambda})$ and $w_\lambda=e_{i_{j_1}}\dotsc e_{i_{j_m}}\cdot w_\emptyset$ if $w=s_{i_{j_1}}\dotsc s_{i_{j_m}}$ is a reduced expression. So in an expansion for $\bar u_2$ the coefficients of  $f_{i_{j_m}}\dotsc f_{i_{j_1}}$ will contribute a summand of $b_{j_1}\dotsc b_{j_m}$ to $\p_{\lambda}(\bar u_2)$.
\end{proof}

\begin{prop}\label{p:eis}
 If $u_1$ and $\bar u_2$ are as above then we have the following identities  
 \begin{equation}\label{eq:ei1first}
  f_m^*(\bar u_2) =\frac{\p_{\rho_{0,+}}(\bar u_2)}{\p_{\rho_0}(\bar u_2)}, 
 \end{equation}
 \begin{equation}
  \label{eq:ei2first}
  e_i^* (u_1) = 0 \text{ for all } 1 \leq i \leq m-1,
 \end{equation}
 \begin{equation}
  \label{eq:ei3first}
  e_m^* (u_1) = e^t \frac{\p_{\rho_{m-1}}(\bar u_2)}{\p_{\rho_m}(\bar u_2)},
 \end{equation}
 where $\rho_0={\emptyset}$ and $\rho_{0,+}=\ydiagram{1}$.
\end{prop}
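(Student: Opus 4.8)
\textbf{Proof proposal for Proposition~\ref{p:eis}.}

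The plan is to compute the three quantities $f_m^*(\bar u_2)$, $e_i^*(u_1)$ and $e_m^*(u_1)$ directly, using the factorization \eqref{e:u2barfactb} of $\bar u_2$ together with the defining relation $\bar u_2\cdot B_+ = e^{-h}\dot w_P\inv u_1\inv\cdot B_-$ that links $u_1$ and $\bar u_2$ in $Z_h$, and then to recognize the resulting expressions as the Pl\"ucker ratios via Lemma~\ref{l:bislemma}. First I would handle $f_m^*(\bar u_2)$: since $\bar u_2$ is a product of the generators $y_i(b_k)$, its image in $U_-$ inside $\hat{\mathcal U}_-$ has a well-defined coefficient of $f_m$, and this coefficient is the sum of $b_k$ over those factors $y_{i_{N-k+1}}(b_{N-k+1})$ with $i_{N-k+1}=m$ that can appear as the \emph{first} generator in a reduced word — concretely, the coefficient of $f_m$ picks out $b_1$ (the rightmost factor $y_m(b_1)$) plus possibly other $y_m$-factors that commute past everything to the right. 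By Lemma~\ref{l:bislemma}, $\p_{\rho_0}(\bar u_2)=\p_\emptyset(\bar u_2)$ is the coefficient of $1$ (i.e.\ the constant term), which equals $1$ for a unipotent element, and $\p_{\rho_{0,+}}(\bar u_2)=\p_{\ydiagram{1}}(\bar u_2)$ is the sum $\sum_J b_{j_1}\cdots b_{j_m}$ over one-letter reduced words for $s_m$, i.e.\ exactly the coefficient of $f_m$ in $\bar u_2$. This gives \eqref{eq:ei1first} essentially immediately, with $\rho_0$ and $\rho_{0,+}$ as stated.

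Next I would address the statements \eqref{eq:ei2first} and \eqref{eq:ei3first} about $u_1$, which require extracting information about $u_1$ from the relation $\bar u_2\cdot B_+ = e^{-h}\dot w_P\inv u_1\inv\cdot B_-$, equivalently $u_1 e^h\dot w_P\bar u_2 = b_-\dot w_0\in B_-\dot w_0$. The key observation is that $u_1\in U_+$ is determined by the lowest-order (Borel-theoretic) data of this product, and that $e_i^*(u_1)$ can be read off from the action of $u_1 e^h \dot w_P\bar u_2$ on an appropriate weight vector in a small representation — most efficiently in $V_\Spin$ or in $\bigwedge^2 V$ (the adjoint representation) where the coefficient of a simple coroot direction isolates $e_i^*(u_1)$. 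Since $w_P$ is the longest element of the maximal parabolic $W_P$ and $\Pi^P=\{\alpha_m\}$, conjugating $e_i$ for $i<m$ by $\dot w_P$ keeps it inside $U_+\cap \dot w_P U_+\dot w_P\inv$, and the only simple-root direction that survives into $U_+$ outside $P$ is the $\alpha_m$-direction; this is the structural reason \eqref{eq:ei2first} should hold, and why only $e_m^*(u_1)$ is nonzero. I would make this precise by pairing $u_1 e^h\dot w_P\bar u_2 = b_-\dot w_0$ against $w_\emptyset^*$ and $w_{\ydiagram{1}}^*$ (or the corresponding $(m+1)$-minors), so that the $e^t$ factor emerges from $e^h=e^{t\omega_m^\vee}$ acting with weight $\langle\omega_m^\vee,\cdot\rangle$, and then identify the remaining $\bar u_2$-dependent factor as $\p_{\rho_{m-1}}(\bar u_2)/\p_{\rho_m}(\bar u_2)$ using Lemma~\ref{l:bislemma} — here $\rho_m$ corresponds to the top-degree basis vector $w_{\rho_m}$ (the full staircase, so $\ell(w)=\binom{m+1}{2}=N$, forcing the unique reduced word of $w^P$) and $\rho_{m-1}$ to the staircase of one less row.

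The main obstacle I anticipate is the second part: carefully controlling $u_1$ from the factorization $b_-\dot w_0 = u_1 e^h\dot w_P\bar u_2$. Unlike $\bar u_2$, which is handed to us in an explicit factorized form, $u_1$ is only implicitly defined, so extracting $e_i^*(u_1)$ requires either a clean choice of representation in which $u_1$'s simple-root coefficients appear transparently, or an inductive/bookkeeping argument tracking how the explicit $y_i(b_k)$ in $\bar u_2$, together with $e^h$ and $\dot w_P$, force the $U_+$-part on the left. Getting the normalization exactly right — the power of $e^t$, the signs, and the precise staircase partitions $\rho_{m-1}, \rho_m$ rather than off-by-one or Poincar\'e-dual variants — is where the care is needed; I expect that working in $V_\Spin$ and pairing against $w_\emptyset^*$ and its neighbours, while invoking the reduced expression $w^P=(s_m)(s_{m-1}s_m)\cdots(s_1\cdots s_m)$ to pin down which monomials in the $b_i$ occur, will be the cleanest route, with Lemma~\ref{l:bislemma} doing the final translation into Pl\"ucker coordinates.
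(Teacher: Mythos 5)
Your treatment of \eqref{eq:ei1first} is correct and is essentially the paper's own argument: $\p_{\rho_0}(\bar u_2)=\p_\emptyset(\bar u_2)=1$ and $\p_{\ydiagram{1}}(\bar u_2)=w_\emptyset^*(\bar u_2\cdot w_{\ydiagram{1}})$ is exactly the coefficient of $f_m$ in $\bar u_2$, since $f_m\cdot w_{\ydiagram{1}}=w_\emptyset$ (and note every $y_m$-factor contributes to this coefficient, not just those that ``commute past everything''; the degree-one term of a product of exponentials is the sum of the degree-one terms, which is what Lemma~\ref{l:bislemma} gives for one-letter reduced words).

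The gap is in \eqref{eq:ei2first} and \eqref{eq:ei3first}, which you only outline, and the heuristic you offer in place of the missing computation is flawed. The assertion that conjugating $e_i$ ($i<m$) by $\dot w_P$ ``keeps it inside $U_+\cap\dot w_P U_+\dot w_P\inv$'' is false: for $\alpha_i\in\Pi_P$ one has $w_P(\alpha_i)\in R_P^-$, so $\dot w_P e_i\dot w_P\inv$ is a \emph{negative} root vector of the Levi; and in any case the vanishing of $e_i^*(u_1)$ is not a purely structural fact about $Z_h$ — it uses the genericity hypothesis on $\bar u_2$. The paper's route (Lemmas~\ref{l:ei} and \ref{l:em}, invoked in the proof of the Proposition) starts from the ratio formula of \cite{rietsch}, $e_i^*(u_1)=\langle u_1\inv v_{\omega_i}^-, e_i\cdot v_{\omega_i}^-\rangle/\langle u_1\inv v_{\omega_i}^-, v_{\omega_i}^-\rangle$, evaluated on the \emph{lowest} weight vector of the $i$-th fundamental representation precisely so that the unknown $b_-$ in $u_1\inv=e^h\dot w_P\bar u_2\dot w_0\inv b_-\inv$ acts by a scalar that cancels; this is exactly the obstacle you flag but do not resolve. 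Then for $i\neq m$ the vanishing follows from a weight comparison that genuinely needs $\bar u_2\in B_+(\dot w^P)\inv B_+$: the weights occurring in $\bar u_2 v_{\omega_i}^+$ are bounded below by $w_P\inv(-\omega_i)$, whereas $\dot w_P\inv e_i\cdot v_{\omega_i}^-$ sits in the strictly lower weight $w_P\inv s_i(-\omega_i)$ exactly when $i\neq m$. Note also that pairing in $V_\Spin$ against $w_\emptyset^*$ and $w_{\ydiagram{1}}^*$ only detects the $\alpha_m$-direction, so it cannot establish \eqref{eq:ei2first}; the paper must work in each $V_{\omega_i}$ separately. For \eqref{eq:ei3first} your plan points in the right direction (the factor $e^t$ is $\alpha_m(e^h)$, the denominator $\prod_j b_j=\p_{\rho_m}(\bar u_2)$ arises because reaching $v_{\omega_m}^+$ forces the full reduced word of $w^P$, and the numerator is identified with the reduced words of $w'=s_m(s_{m-1}s_m)\cdots(s_2\cdots s_m)$, i.e.\ with $\p_{\rho_{m-1}}(\bar u_2)$ via Lemma~\ref{l:bislemma}), but it again presupposes an expression for $e_m^*(u_1)$ purely in terms of $\bar u_2$, which is the step your proposal leaves open.
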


\begin{proof}
 For \eqref{eq:ei1first} notice that in fact  ${\p_{\emptyset}(\bar u_2)} =1$ and 
 \[
  \p_{{\ydiagram{1}}}(\bar u_2)=\left<w_\emptyset^* \cdot  \bar u_2, w_{\ydiagram{1}}\right>=
  w_\emptyset^* ( \bar u_2 \cdot w_{\ydiagram{1}}).
 \]
 Then \eqref{eq:ei1first} is apparent since $f_m\cdot w_{\ydiagram{1}}= w_{\emptyset} $. In fact \eqref{eq:ei1first} does not depend on the special form of $u_1$ and $\bar u_2$. The equations~\eqref{eq:ei2first} and \eqref{eq:ei3first} are consequences of the Lemmas~\ref{l:ei} and \ref{l:em}, respectively, as well as the Lemma~\ref{l:bislemma}.
\end{proof}

\begin{prop}
 \begin{equation}
  \label{eq:fj}
  f_j^* (\bar u_2) = \frac{\Delta_{j,j+2,\dotsc,j+ m+1}^{m+1,\dotsc, 2m+1}(\bar u_2)}{\Delta_{j+1,\dotsc, j+m+1}^{m+1,\dotsc, 2m+1}(\bar u_2)} 
  \   \text{ for all } 1 \leq j \leq m-1
 \end{equation}
\end{prop}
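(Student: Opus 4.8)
The plan is to compute both sides of \eqref{eq:fj} directly using the explicit matrices representing the Chevalley generators $f_j$ acting on $V=\C^{2m+1}$, together with the factorisation \eqref{e:u2barfactb} of $\bar u_2$. First I would read off $f_j^*(\bar u_2)$: since $\bar u_2 \in U_-$, write $\bar u_2 = \exp(n)$ with $n \in \mathfrak n_-$, so $f_j^*(\bar u_2)$ is the coefficient of $f_j$ in $n$, equivalently the $(j+1,j)$-entry of $\bar u_2$ acting in the defining representation $V$ (for $j \leq m-1$, where $f_j = E_{j+1,j} + E_{2m+1-j,2m+2-j}$ has its "first" matrix entry in position $(j+1,j)$, the others contributing to higher roots or being determined by it). So the left side is a single matrix entry of the $(2m+1)\times(2m+1)$ matrix $\bar u_2$.

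Next I would identify the right side. The minor $\Delta^{m+1,\dots,2m+1}_{j+1,\dots,j+m+1}(\bar u_2)$ is the $(m+1)\times(m+1)$ minor of $\bar u_2$ on rows $m+1,\dots,2m+1$ and columns $j+1,\dots,j+m+1$; this is (up to the identification $V\cong V^*$) essentially the Plücker-type coordinate $\p_\lambda$ for the appropriate $\lambda$, and by Lemma~\ref{l:bislemma} it can be written as a sum over reduced subwords. The key structural observation is that because $\bar u_2$ is a product of $y_i$'s following the specific reduced word $w^P = (s_m)(s_{m-1}s_m)\cdots(s_1\cdots s_m)$, the matrix $\bar u_2$ is \emph{lower triangular} with a very controlled shape, and in fact the relevant submatrices on rows $m+1,\dots,2m+1$ are themselves nearly triangular, so that the two minors differ by precisely one row/column. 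Concretely, $\Delta^{m+1,\dots,2m+1}_{j+1,\dots,j+m+1}$ and $\Delta^{m+1,\dots,2m+1}_{j,j+2,\dots,j+m+1}$ are adjacent minors differing only in whether the first column is $j+1$ or $j$; expanding along that differing column and using triangularity should collapse the ratio to a single matrix entry.

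The cleanest route is probably a Cramer-type / Gauss-elimination argument: viewing the rows $m+1,\dots,2m+1$ of $\bar u_2$ as a $(m+1)\times(2m+1)$ matrix in reduced column-echelon-like form determined by the factorisation, the quantity $\Delta^{m+1,\dots,2m+1}_{j,j+2,\dots,j+m+1}/\Delta^{m+1,\dots,2m+1}_{j+1,\dots,j+m+1}$ is exactly a ratio of two consecutive "leading minors" which computes a pivot entry, and one checks this pivot entry equals the $(j+1,j)$-matrix coefficient that is $f_j^*(\bar u_2)$. Alternatively, one can argue representation-theoretically: both sides are regular functions on the cell $U_-\cap B_+(\dot w^P)\inv B_+$, both are ratios of matrix coefficients in $\bigwedge^\bullet V$, and one identifies them by evaluating on the one-parameter subgroups $y_i(t)$ generating the cell and checking that the induction on the reduced word goes through — i.e. verify the identity when $\bar u_2$ is multiplied on the right by a single further $y_{i}(b)$, using how such multiplication changes the relevant minors by a linear shift.

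The main obstacle I expect is bookkeeping: getting the indices in the minors to line up correctly with the root data, i.e. verifying that the minor $\Delta^{m+1,\dots,2m+1}_{j+1,\dots,j+m+1}(\bar u_2)$ really is (a nonzero scalar multiple of) the Plücker coordinate $\p_\lambda(\bar u_2)$ for the right $\lambda$, and that the "add one box" combinatorics $\rho_l \leadsto \rho_{l,+}$ matches "shift column index $j+1 \leadsto j$". Dealing with the doubled structure of $f_j = E_{j+1,j}+E_{2m+1-j,2m+2-j}$ inside $\so(V)$, and with the factor of $\sqrt 2$ in $f_m$ (though here only $j \leq m-1$ is treated), also requires care. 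Once the dictionary between minors, Plücker coordinates, and matrix entries is pinned down, the identity \eqref{eq:fj} should follow from a short determinantal manipulation together with the triangularity forced by the chosen reduced word, and this will also set up the comparison, in the next step, between these minors and the quadratic expressions in the $\p_\lambda$ appearing in the denominators $D_l$ of $W_t$.
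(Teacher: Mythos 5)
There is a genuine gap: the mechanism you propose cannot by itself produce the identity, because the two minors on the right-hand side only involve rows $m+1,\dotsc,2m+1$ of $\bar u_2$, whereas $f_j^*(\bar u_2)$ is the $(j+1,j)$-entry, sitting in row $j+1\leq m$. For a general lower unitriangular matrix these data are independent, so no Cramer/column-echelon or ``adjacent minors differ by one column'' manipulation of the bottom rows can collapse the ratio to that entry; indeed the columns $j$ and $j+1$ restricted to rows $m+1,\dotsc,2m+1$ lie entirely below the diagonal, so there is no triangularity to exploit there, and a Laplace expansion along the differing column gives a full sum over rows, not a single term. The identity \eqref{eq:fj} is \emph{not} true for arbitrary $\bar u_2\in U_-$; it genuinely uses that $\bar u_2$ lies in $U_-\cap B_+(\dot w^P)\inv B_+$ with the factorisation \eqref{e:u2barfactb}, and your sketch never isolates where that hypothesis enters.

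The paper supplies exactly this missing ingredient: it first proves the vanishing of the $(m+2)\times(m+2)$ minor $\Delta_{j,j+1,\dotsc,j+m+1}^{j+1,m+1,\dotsc,2m+1}(\bar u_2)$, and this is the real content of the proof --- it is established by a careful analysis of which monomials $\underline{e}$ in the $e_i$'s, compatible with the chosen reduced word of $w^P$, could move $v^*_{j+1}\wedge v^*_{m+1}\wedge\cdots\wedge v^*_{2m+1}$ to have a nonzero $v^*_{j}\wedge\cdots\wedge v^*_{j+m+1}$-component (none can). Only then does a determinantal step enter: expanding this vanishing minor along the row $j+1$, which within the columns $j,\dotsc,j+m+1$ has just two nonzero entries ($1$ in column $j+1$ and $f_j^*(\bar u_2)$ in column $j$) by lower-triangularity, gives the two-term relation
\begin{equation*}
0=\pm\Bigl(\Delta_{j,j+2,\dotsc,j+m+1}^{m+1,\dotsc,2m+1}(\bar u_2)-f_j^*(\bar u_2)\,\Delta_{j+1,\dotsc,j+m+1}^{m+1,\dotsc,2m+1}(\bar u_2)\Bigr),
\end{equation*}
which is \eqref{eq:fj}. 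Your alternative suggestion (induction on the reduced word, tracking how right multiplication by a single $y_i(b)$ changes the minors) could in principle be made to work, but as written it is only a plan, and the inductive verification is precisely where the cell condition must be used; without identifying a relation tying row $j+1$ to the rows $m+1,\dotsc,2m+1$ --- such as the vanishing minor above --- the argument does not close.
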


\begin{proof}
 The result is a consequence of the vanishing of the following minor of $\bar u_2$ :
 \[
  \Delta_{j,j+1,\dotsc,j+ m+1}^{j+1,m+1,\dotsc, 2m+1}(\bar u_2),
 \]
 which is equal to
 \[
  \langle v^*_{j+1}\wedge v^*_{m+1} \cdots \wedge v_{2m+1}^* \cdot g\ ,\ v_{j}\wedge v_{j+1} \cdots \wedge v_{j+m+1} \rangle.
 \]
 Define an element in the enveloping algebra 
 \[
  \underline{e} := \left( e_m^{(a_{1,m})} e_{m-1}^{(a_{1,m-1})} \dots e_1^{(a_{1,1})} \right) \dots \left( e_m^{(a_{m-1,m})} e_{m-1}^{(a_{m-1,m-1})} \right) e_m^{(a_{m,m})},
 \]
 where $a_{i,j} \in \{ 0,1,2 \}$ if $j=m$ and $a_{i,j} \in \{ 0,1 \}$ otherwise. Here $e_i^{(a)}=\frac{1}{a!} e_i^a$. Due to the shape of $\bar u_2$, the minor is zero if for any such $\underline{e}$, $v^*_{j+1}\wedge v^*_{m+1} \cdots \wedge v_{2m+1}^* \cdot \underline{e}$ has zero $v_{j}^*\wedge v_{j+1}^* \cdots \wedge v_{j+m+1}^*$-component. Assume by contradiction that there exists an $\overline{e}$ such that this component is nonzero.
 
 First suppose $j=m-1$. Then since $v^*_{m}\wedge v^*_{m+1} \cdots \wedge v_{2m+1}^* \cdot e_m=0$, the exponent $a_{1,m}$ in $\underline{e}$ has to be zero. Now the $v_{2m+1}^*$ has to be moved to $v_{2m}^*$, which means that $v_{m}^*$ needs to be moved \emph{before} to $v_{m-1}^*$ by an $e_{m-1}$. Since only one $e_1$ appears in the expression of $\underline{e}$, it means that $a_{1,m-1}=1$. Hence $v^*_{m}\wedge v^*_{m+1} \cdots \wedge v_{2m+1}^* \cdot \underline{e}$ is equal to
 \begin{align*}
  v_{m-1}^* \wedge v_{m+1}^* \wedge \dots \wedge v_{2m+1}^* \cdot \left( e_{m-2}^{a_{1,m-2}} \dots e_1^{a_{1,1}} \right) \dots \left( e_m^{a_{m-1,m}} e_{m-1}^{a_{m-1,m-1}} \right) e_m^{a_{m,m}}.
 \end{align*}
 Since $v_{m-1}^* \wedge v_{m+1}^* \wedge \dots \wedge v_{2m+1}^* \cdot e_i = 0$ for all $1 \leq i \leq m-2$, it follows that $a_{1,m-2}=\dots=a_{1,2}=a_{1,1}=0$, which means that the $v_{2m+1}^*$ can never be moved to $v_{2m}^*$. Hence there exists no $\underline{e}$ such that $v^*_{j+1}\wedge v^*_{m+1} \cdots \wedge v_{2m+1}^* \cdot \underline{e}$ has nonzero $v_{j}^*\wedge v_{j+1}^* \cdots \wedge v_{j+m+1}^*$-component.
 
 Now suppose $j<m-1$. $v_{2m+1}^*$ has to be moved to $v_{2m}^*$ by the only $e_1$ in the expression of $\underline{e}$, hence $a_{1,1}=1$. But $v_{m+1}^*, \dots, v_{2m}^*$ need to be moved before, hence $a_{1,i}=1$ for $1 \leq i \leq m-1$ and $a_{1,m}=2$. It follows that $v^*_{j+1}\wedge v^*_{m+1} \cdots \wedge v_{2m+1}^* \cdot \underline{e}$ is equal to
 \begin{align*}
  \left(v^*_{j+1}\wedge v^*_{m} \cdots \wedge v_{2m}^* + v^*_{1}\wedge v^*_{m} \cdots \wedge v_{m-j}^* \wedge v_{m+2-j}^* \wedge \dots \wedge v_{2m+1}^* \right) \cdot \underline{e}',
 \end{align*}
 where 
 \[
  \underline{e}' := \left( e_m^{a_{2,m}} e_{m-1}^{a_{2,m-1}} \dots e_2^{a_{2,2}} \right) \dots \left( e_m^{a_{m-1,m}} e_{m-1}^{a_{m-1,m-1}} \right) e_m^{a_{m,m}}.
 \]
 Then
 \begin{align*}
  v^*_{1}\wedge v^*_{m} \cdots \wedge v_{m-j}^* \wedge v_{m+2-j}^* \wedge \dots \wedge v_{2m+1}^* \cdot \underline{e}'
 \end{align*}
 has clearly no non-zero $v_{j}^*\wedge v_{j+1}^* \cdots \wedge v_{j+m+1}^*$-component, hence we focus on $v^*_{j+1}\wedge v^*_{m} \cdots \wedge v_{2m}^* \cdot \underline{e}'$.
 
 If $j=m-2$, then $v_{2m}^*$ has to be moved to $v_{2m-1}^*$ by the only $e_2$ in $\underline{e}'$. Hence $a_{2,2}=1$. But $v^*_{m-1}\wedge v^*_{m} \cdots \wedge v_{2m}^* \cdot e_m=0$, which means that $a_{2,m}=0$. It follows that $v_{m+1}^*$ cannot be moved to $v_m^*$ before having to move the $v_{2m}^*$, and hence that a suitable $\underline{e}$ does not exist.
 
 Finally if $j \leq m-3$, then $v^*_{j+1}\wedge v^*_{m} \cdots \wedge v_{2m}^* \cdot e_i=0$ for all $j+1 \leq i \leq m$, hence $a_{2,j+1}=\dots=a_{2,m}=0$. It follows that the $v_{m+1-j}^*$ cannot be moved before the $v_{2m}^*$ has to be by the only remaining $e_2$ in $\underline{e}'$. This concludes the proof of the minor vanishing.
 
 To prove the proposition, we only need to expand this vanishing minor with respect to the $(j+1)$-st row. Indeed, due to $\bar u_2$ being lower triangular, this row has only two non-zero entries :  $1$ on the $(j+1)$-st column and $f_j^*(\bar u_2)$ on the $j$-th column.
\end{proof}



\subsection{The Clifford Algebra and  homogeneous coordinates}

\subsubsection{Setting}

In this section we study the surjection of representations from \eqref{e:symtowedge}, that is
\[
 \pi: \Sym^2 (V_\Spin)\to \bigwedge{}^{m+1}V,
\]
which is also interpreted as the restriction map of homogeneous coordinates 
\[
 \Gamma[\mathcal O_{\P(\Sym^2(V_\Spin^*))}(1)] \to \Gamma[\mathcal O_{\P(\bigwedge^{m+1}V^*)}(1)].
\]

Of course in representation-theoretic terms  the map $\pi$ exists just because $\bigwedge{}^{m+1}V$ is irreducible with highest weight $2\omega_m$ and this highest weight also occurs in  $\Sym^2 (V_\Spin)$ with multiplicity one. But in order to compute with this map we will need to use a more intrinsic construction. We first note the following auxiliary lemma, whose proof is straightforward.
\begin{lemma}
 \label{l:dual}
  The isomorphism 
  \[
   \begin{array}{ccc}
    \delta: \Vspin & \rightarrow & \Vspin^* \\
    v_\lambda & \mapsto & (-1)^{|\lambda|} v_{\PD(\lambda)}
   \end{array}
  \]
  is $\mathfrak{so}(V)$-equivariant.
\end{lemma}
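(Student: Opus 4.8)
The claim is that the linear map $\delta:V_{\Spin}\to V_{\Spin}^*$ sending $w_\lambda\mapsto (-1)^{|\lambda|}w_{\PD(\lambda)}^*$ is $\so(V)$-equivariant. Since $\so(V)$ is generated by the Chevalley generators $e_i,f_i$, it suffices to check equivariance on these, i.e. to verify
\[
 \delta(e_i\cdot w_\lambda)=e_i\cdot\delta(w_\lambda),\qquad \delta(f_i\cdot w_\lambda)=f_i\cdot\delta(w_\lambda)
\]
for all $1\le i\le m$ and all strict $\lambda\subset m\times m$, where on the right-hand side $e_i$ acts on $V_{\Spin}^*$ by the dual (contragredient) action. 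Here it is cleaner to use the subset description $I\subset\{1,\dots,m\}$ rather than partitions: Poincaré duality $\PD$ corresponds to complementation $I\mapsto I^c=\{1,\dots,m\}\setminus I$, and $|\lambda|=\sum_{i\in I}(m+1-i)$.

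The main computation is then bookkeeping with the explicit Clifford action recalled in Section \ref{s:notations}: $v_i\cdot w_I=v_i\wedge w_I$ (wedging, i.e. adding $i$ to $I$ with a sign), $\bar v_j\cdot w_I=i_{\bar v_j}(w_I)$ (contraction, i.e. removing $j$ from $I$ with a sign), and $v_{m+1}\cdot w_I=\tfrac{(-1)^{|I|}}{\sqrt2}w_I$. Recall from the excerpt that $e_i=\epsilon(i+1)v_i\bar v_{i+1}$ and $f_i=\epsilon(i)v_{i+1}\bar v_i$ for $i<m$, while $e_m=\sqrt2\,v_mv_{m+1}$ and $f_m=\sqrt2\,\bar v_mv_{m+1}$. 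So for $i<m$, $e_i\cdot w_I$ is nonzero exactly when $i+1\in I$ and $i\notin I$, producing $\pm w_{I'}$ with $I'=(I\setminus\{i+1\})\cup\{i\}$; on subsets this is the "move a box" operation, and correspondingly it moves a box on the complement $I^c$ too. One checks that the signs arising from (a) the two contraction/wedge operations on $w_I$, (b) the factor $(-1)^{|I|}$ vs $(-1)^{|I'|}$ in the $e_m,f_m$ case from the $v_{m+1}$ action, (c) the prescribed prefactors $(-1)^{|\lambda|}$ vs $(-1)^{|\PD(\lambda)|}$ in the definition of $\delta$, and (d) the sign in the dual action (which swaps $e_i\leftrightarrow$ its transpose against the pairing), all conspire to match. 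The identity $|\lambda|+|\PD(\lambda)|$ being constant on each graded piece shift, and more precisely the change in $|\lambda|$ under adding/moving a box versus the change in $|\PD(\lambda)|$, is the arithmetic fact that makes the $(-1)^{|\lambda|}$ normalization the correct one.

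The only real obstacle is getting the sign conventions consistent: there are four independent sign sources listed above, plus the $\epsilon(i)=(-1)^{m+1-i}$ factors in the generators and in the bilinear form, and a careless sign will make $\delta$ equivariant only up to a sign that depends on the degree. The clean way to organize this, which I would adopt, is to observe that $\delta$ is (up to the claimed scalars) the pairing $V_{\Spin}\otimes V_{\Spin}\to\C$ coming from the top exterior power $\bigwedge^m W$, i.e. $w_I\otimes w_J\mapsto$ coefficient of $w_{\{1,\dots,m\}}$ in $w_I\wedge w_J$ (nonzero iff $J=I^c$); this pairing is manifestly $\Cl(V)$-invariant up to the main anti-automorphism of the Clifford algebra, hence $\so(V)$-invariant in the appropriate sense since $\so(V)\subset\bigwedge^2V$ is fixed (up to sign) by that anti-automorphism. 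From invariance of a nondegenerate pairing one gets $\so(V)$-equivariance of the associated isomorphism $V_{\Spin}\to V_{\Spin}^*$ for free, and the only remaining task is to identify that isomorphism with $\delta$ on the standard basis, which pins down the scalar $(-1)^{|\lambda|}$ by a single direct check (e.g. on $w_\emptyset$ and one neighbor). Because the excerpt says "whose proof is straightforward," I would keep the write-up to this structural argument plus one explicit basis computation rather than the full generator-by-generator sign chase.
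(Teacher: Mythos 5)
Your reduction to the Chevalley generators, the dictionary $\PD\leftrightarrow$ complementation $I\mapsto I^c=\{1,\dots,m\}\setminus I$, and the inventory of sign sources are all correct, and the generator-by-generator verification you sketch first is exactly the ``straightforward'' argument the paper has in mind (the paper gives no proof at all). The problem is with the structural shortcut you say you would actually write up: the naive top-wedge pairing $\beta(\w_I,\w_J):=$ coefficient of $\w_{\{1,\dots,m\}}$ in $\w_I\wedge \w_J$ is \emph{not} ``manifestly $\Cl(V)$-invariant up to the main anti-automorphism,'' and in fact it is not $\so(V)$-invariant. Moving a generator $v\in V$ across $\w_I$ costs $(-1)^{|I|}$, and these parities do not cancel in the quadratic elements spanning $\so(V)$: for $m=1$, with $e_1=\sqrt2\,v_1v_2$ one gets $\beta(e_1\cdot \w_\emptyset,\w_\emptyset)+\beta(\w_\emptyset,e_1\cdot \w_\emptyset)=2\neq 0$; equivalently, the naive pairing is symmetric while the invariant form on the spin representation of $\so(3)\cong\mathfrak{sl}_2$ is antisymmetric, and for $m=2$ the naive pairing is neither symmetric nor antisymmetric, which an invariant form on an irreducible module must be. (For odd $m$ the middle generator $v_{m+1}$, acting by $(-1)^{|I|}/\sqrt2$, already breaks the claimed compatibility.) Concretely, comparing $\delta(\w_I)=(-1)^{|\lambda(I)|}\w_{I^c}^*$ with the shuffle sign $\epsilon(I,I^c)=(-1)^{\sum_{i\in I}i-\binom{|I|+1}{2}}$ produced by the naive pairing, the two differ by $(-1)^{|I|(m+1)-\binom{|I|+1}{2}}$, a nonconstant degree-dependent factor; so at most one of the two maps $\Vspin\to\Vspin^*$ can be equivariant, and it is $\delta$, not the one induced by your pairing.

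Because of this, the plan ``invariance for free, then pin down the scalar by one basis check via Schur'' does not go through: you would be normalizing $\delta$ against a comparison map that is itself not equivariant, and the degree-dependent sign $(-1)^{|\lambda|}$ is precisely the content of the lemma rather than an after-the-fact normalization. You can rescue the structural route by first twisting the pairing by the appropriate degree-dependent sign and then proving compatibility with the main anti-automorphism on the generators $e_i=\epsilon(i+1)v_i\bar v_{i+1}$, $e_m=\sqrt2\,v_mv_{m+1}$, $f_i$, $f_m$ — but that verification is exactly the same sign bookkeeping as the direct check in your first route, so nothing is saved. As written, then, the proposal is incomplete: either carry out the explicit check that $\delta(e_i\cdot \w_I)=e_i\cdot\delta(\w_I)$ and $\delta(f_i\cdot \w_I)=f_i\cdot\delta(\w_I)$ in the cases $i<m$ and $i=m$ (using $|\lambda(I)|+|\lambda(I^c)|=\tfrac{m(m+1)}{2}$ and the box-adding/removing arithmetic you describe), or prove invariance of the correctly twisted pairing; both amount to the computation you deferred.
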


For the construction of the map $\pi$ first we define an equivariant embedding
\[
 \iota_{V_\Spin}:\Sym^2(V_\Spin)\hookrightarrow V_\Spin \otimes V_\Spin\overset{ \delta\otimes\id_{V_\Spin} } \longrightarrow V_\Spin^* \otimes V_\Spin = \End(V_\Spin). 
\]
Then there are two subtly different cases to distinguish.

\paragraph{Case 1:} If $m$ is odd then we construct $\pi$ as follows. 
 Applying the constructions from Section~\ref{s:notations} 
 we have an isomorphism  of representations \eqref{e:OddToEnd},
 \[
  \kappa_-\inv: \End(V_\Spin)\to\Cl^-(V)\to\bigoplus_{k=0}^{m}\bigwedge{}^{2k+1}\ V.
 \] 
 Because $m$ is odd we have a projection onto the summand with $k=\frac{m-1}2$,
 \[
  \pr_{\wedge^{m}}: \bigoplus_{k=0}^{m}\bigwedge{}^{2k+1}\ V\to \bigwedge{}^{m}\ V.
 \]
 By contracting with $(-1)^{\frac{m(m+1)}{2}} v_1^*\wedge\cdots \wedge v_{2m+1}^*$ we get an equivariant isomorphism
 \[
  c:~\bigwedge{}^{m}\ V\to\bigwedge{}^{m+1}\ V^*.
 \]
 Finally, we have an equivariant isomorphism
 \[
  d:~\bigwedge{}^{m+1}\ V^* \to \bigwedge{}^{m+1}\ V
 \]
 defined using the isomorphism $V \cong V^*$ given by the quadratic form and made explicit in the end of Subsection \ref{s:notations}. Composing $\iota_{V_\Spin}$ with these four maps gives us our homomorphism of representations 
 \[
  \pi: \Sym^2(V_\Spin)\longrightarrow  \bigwedge{}^{m+1}\ V.
 \]

\paragraph{Case 2:} Suppose $m$ is even. In this case we use the even part of the Clifford algebra of $V$, namely
 we use the inverse of the isomorphism from \eqref{e:EvenToEnd}
 \[
  \kappa_+\inv: \End(V_\Spin)\to\Cl^+(V)\to\bigoplus_{k=0}^{m}\bigwedge{}^{2k}\ V.
 \] 
 Since $m$ is even we have a projection onto the middle summand, $k=\frac{m}2$,
 \[
  \pr_{\wedge^{m}}: \bigoplus_{k=0}^{m}\bigwedge{}^{2k}\ V^*\to \bigwedge{}^{m}\ V.
 \]
 Finally we use the isomorphism of representations $c$ as in Case~1,
 \[
  c: \bigwedge{}^{m}\ V\overset\sim\longrightarrow \bigwedge{}^{m+1}\ V^*
 \]
 as well as the map
 \[
  d:~\bigwedge{}^{m+1}\ V^* \to \bigwedge{}^{m+1}\ V.
 \]

 Composing $\iota_{V_{\Spin}}$ with these four maps gives us our homomorphism of representations 
 \[
  \pi:\Sym^2(V_\Spin)\longrightarrow  \bigwedge{}^{m+1}\ V
 \]
 in the case where $m$ is even.

\subsubsection{Statement}

\begin{defn}
 Corresponding to the quadratic denominators in $W_t$ we define elements of $\Sym^{2}(\Vspin)$ by
 \[
  \D_{(j)} := \sum_{I } (-1)^{s(I)} w_{\rho_{m+1-j}^I} w_{\mu_{m+1-j}^I}
 \]
 and
 \[
  \N_{(j)} := \sum_{I } (-1)^{s(I)} w_{\rho_{m+1-j,+}^I} w_{\mu_{m+1-j,+}^I}
 \]
 where the sums are over all subsets $I\subset \{1,\dots,m+1-j\}$ and $j=2,\dotsc, m$.
\end{defn}

We will prove that for all $j=2,\dotsc, m$ :
\begin{prop}
\label{p:SymToMinor}
 \[
  \sum_{I } (-1)^{s(I)} \p_{\rho_{m+1-j}^I}(\bar u_2) \p_{\mu_{m+1-j}^I}(\bar u_2) = \Delta_{m+2-j,\dotsc, 2m+2-j}^{m+1,\dotsc, 2m+1}(\bar u_2)
 \]
 and
 \[
  \sum_{I } (-1)^{s(I)} \p_{\rho_{m+1-j,+}^I}(\bar u_2) \p_{\mu_{m+1-j,+}^I}(\bar u_2) = \Delta_{m+1-j,m+3-j\dotsc, 2m+2-j}^{m+1,\dotsc, 2m+1}(\bar u_2)
 \]
 where the sums are over all subsets $I\subset \{1,\dots,m+1-j\}$.
\end{prop}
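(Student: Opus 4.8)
The plan is to use the construction of $\pi\colon \Sym^2(\Vspin)\to\bigwedge^{m+1}V$ from the previous subsection, together with the fact that $\pi$ is the restriction map of homogeneous coordinates. Evaluating the section $\D_{(j)}\in\Sym^2(\Vspin)$ at the point $\pi_2(P\bar u_2)=\langle(w_\emptyset^*\cdot w_\emptyset^*)\cdot\bar u_2\rangle$ produces, term by term, exactly the left-hand side $\sum_{I}(-1)^{s(I)}\p_{\rho_{m+1-j}^{I}}(\bar u_2)\,\p_{\mu_{m+1-j}^{I}}(\bar u_2)$, while evaluating $\pi(\D_{(j)})$ at $\pi_1(P\bar u_2)=\langle v_{m+1}^*\wedge\cdots\wedge v_{2m+1}^*\cdot\bar u_2\rangle$ produces a linear combination of the minors $\Delta_J^{m+1,\dotsc,2m+1}(\bar u_2)$; since $\pi$ is the coordinate restriction map these two values agree, up to one fixed scalar coming from the chosen identification $\bigwedge^{m+1}V^*\hookrightarrow\Sym^2(\Vspin^*)$. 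Thus the proposition is equivalent to the two identities in $\bigwedge^{m+1}V$
\[
 \pi(\D_{(j)})=v_{m+2-j}\wedge v_{m+3-j}\wedge\cdots\wedge v_{2m+2-j},\qquad
 \pi(\N_{(j)})=v_{m+1-j}\wedge v_{m+3-j}\wedge\cdots\wedge v_{2m+2-j},
\]
together with the verification that the fixed scalar equals $1$ --- which is exactly what the normalizations built into the maps $c$ and $d$ (notably the sign $(-1)^{m(m+1)/2}$ in the top form) are meant to arrange.

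To establish these identities I would unwind $\pi$ on a single monomial $w_\mu w_\nu\in\Sym^2(\Vspin)$. Under $\iota_{\Vspin}$ it maps to $\delta(w_\mu)\otimes w_\nu+\delta(w_\nu)\otimes w_\mu\in\End(\Vspin)$, and by Lemma~\ref{l:dual} $\delta(w_\lambda)$ is $(-1)^{|\lambda|}$ times the basis covector dual to $w_{\PD(\lambda)}$, where $\PD$ is complementation of the indexing subset of $\{1,\dotsc,m\}$. The next step is to compute the image of a rank-one operator $w_A^*\otimes w_B$ under the relevant inverse Clifford isomorphism ($\kappa_-\inv$ for $m$ odd, $\kappa_+\inv$ for $m$ even) and then to project its antisymmetrization onto $\bigwedge^{m}V$: using the trace form on $\Cl(V)$ one finds $\pr_{\wedge^{m}}\bigl(\kappa^{-1}(w_A^*\otimes w_B)\bigr)=\sum_{|S|=m}c_S\,v_S$, where, up to one universal power of $\tfrac1{\sqrt2}$, the coefficient $c_S$ is the scalar by which the Clifford monomial $v_{s_1}\cdots v_{s_m}$ (with $S=\{s_1<\cdots<s_m\}$) carries $w_B$ to $w_A$ --- a signed power of $\tfrac1{\sqrt2}$, and $0$ unless such a ``routing'' by wedgings, contractions and $v_{m+1}$-scalars exists. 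Contracting with the chosen top form sends $v_S$ to $\pm\bigwedge_{i\in S^{c}}v_i^*$ in $\bigwedge^{m+1}V^*$, and $d$ converts this into the corresponding wedge monomial in the $v_i$, indexed by $S^{c}=\{1,\dotsc,2m+1\}\setminus S$. So $\pi(w_\mu w_\nu)$ becomes an explicit signed sum of wedge monomials, indexed by the length-$m$ Clifford routings from $w_\nu$ to $w_{\PD(\mu)}$ and from $w_\mu$ to $w_{\PD(\nu)}$.

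The heart of the argument is the combinatorial collapse of the full sum. Substituting $\mu=\rho_{l}^{I}$, $\nu=\mu_{l}^{I}$ with $l:=m+1-j$, summing over $I\subseteq\{1,\dotsc,l\}$ with sign $(-1)^{s(I)}$, and assembling the monomials above, one wants all of them to cancel except $v_{m+2-j}\wedge\cdots\wedge v_{2m+2-j}$ (and, in the $\N_{(j)}$ case, its shift). I would prove this by exhibiting a sign-reversing involution on the set of triples --- (subset $I$, a routing from $w_\nu$ to $w_{\PD(\mu)}$, a routing from $w_\mu$ to $w_{\PD(\nu)}$) --- whose associated wedge monomial is not the target one: the staircase shape of $\rho_l$, whose $k$-th row has the same size $l+1-k$ as the row one adjoins to the bottom of $\mu_l$ when $k\in I$, is precisely the feature that lets one toggle the membership of a row in $I$ against a matching move inside a routing, and $(-1)^{s(I)}$ is exactly the sign picked up by this toggle. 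For $\N_{(j)}$ the same mechanism applies with $\rho_{l,+},\mu_{l,+}$, the single extra box accounting for the shifted column index $m+1-j$ of the target minor. Finally one checks that the coefficient of the surviving monomial, once all powers of $\tfrac1{\sqrt2}$, anti-commutation signs and the top-form sign are collected, equals $1$.

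The step I expect to cost the most effort is this last collapse: arranging the double sum (over $I$ and over routings) so that the cancellation is transparent and produces exactly one reduced wedge monomial with coefficient $1$. Keeping all anti-commutation signs straight in the degree-$m$ symbol of a rank-one Clifford operator is the other delicate point, and the parity dichotomy in the definition of $\pi$ forces one to run it once in $\Cl^-(V)$ and once in $\Cl^+(V)$. A simplification I would try first is induction on $m$: discarding the top row of the staircase should identify $\D_{(j)}$ and $\N_{(j)}$ for $\Spin(V)$ with the analogous elements for a $\Spin(V')$ of rank $m-1$ sitting inside $\Spin(V)$, under which the relevant $(m+1)\times(m+1)$ minor restricts to an $m\times m$ minor; this could replace the involution by a short recursion anchored at $m=1$.
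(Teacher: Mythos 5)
Your first reduction is exactly the one the paper makes: interpret $\sum_I(-1)^{s(I)}\p_{\rho^I}\p_{\mu^I}$ as the evaluation of the section $\D_{(j)}\in\Sym^2(V_\Spin)$ under the embedding $\pi_2$, the minors as evaluations of elements of $\bigwedge^{m+1}V$ under $\pi_1$, and reduce everything to computing $\pi(\D_{(j)})$ and $\pi(\N_{(j)})$ (this is Proposition \ref{p:DenomProj}), with the overall scalar normalization of the equivariant map $\pi$ pinned down once. Up to that point your plan and the paper agree.

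The genuine gap is in how you propose to compute $\pi(\D_{(j)})$. Your plan is a forward expansion: write each rank-one operator $\delta(w_\mu)\otimes w_\nu$ in the Clifford monomial basis via a trace-form extraction, describe the degree-$m$ coefficients by ``routings'', and then cancel everything except the target monomial by a sign-reversing involution on triples $(I,\text{routing},\text{routing})$ in which toggling a row of $I$ is matched against a move inside a routing. That involution is the entire content of the statement, and you never construct it: no explicit toggle rule is given, no argument that it is well defined away from the surviving monomial, no verification that $(-1)^{s(I)}$ is precisely the sign it reverses, and all of the $\sqrt2$-powers, anticommutation signs and the restriction-map scalar are deferred to a final ``check''. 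As written this is a program rather than a proof, and the delicate part is exactly the part left open (your fallback suggestion of induction on $m$ is likewise only named, not carried out). For comparison, the paper avoids the forward expansion altogether: it exhibits an explicit candidate $A_{(j)}\in\Cl(V)$ (Proposition \ref{p:DinCl}) whose only monomial of degree at least $m$ is $v_{1,\dots,m+1-j,2m+3-j,\dots,2m+1}$, all remaining terms having strictly smaller Clifford degree and hence dying under $\pr_{\wedge^{m}}$, and then verifies that $A_{(j)}$ acts on every basis vector $w_L$ exactly as $\iota_{V_\Spin}(\D_{(j)})$ does, using the two elementary computations of Lemmas \ref{l:I-Ibar} and \ref{l:tj}; the cancellation you want from an involution there reduces to alternating sums $\sum_{I}(-1)^{|I|}=0$ over subsets, and Corollary \ref{coro:SymToWedge} plus the maps $c$ and $d$ finish the argument. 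If you want to salvage your route, the most economical fix is to adopt this ``guess the Clifford element and check its action on the $w_L$'' step in place of the involution; otherwise you must actually define the involution and track the normalizations, which is where all the real work of the proposition lies.
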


\begin{remark} Note that this proposition gives us an alternative definition of $\check X^\circ$ in terms of  non-vanishing of minors.   
\end{remark}
\subsubsection{Proof}

To prove Proposition \ref{p:SymToMinor}, we will need to compare $\D_{(j)},\N_{(j)} \in\Sym^{2}(\Vspin)$ to the elements of $\bigwedge^{m+1}V$ defined below.
\begin{defn}\label{d:jwedge} 
 Inside the exterior power $\bigwedge^{m+1}V$, if $2 \leq j \leq m$ we consider the elements
 \begin{align*}
  v_{(j)}^{\wedge} &:= v_j \wedge \dots \wedge v_{j+m} \\
  v_{(j),+}^{\wedge} &:= v_{j-1} \wedge v_{j+1} \wedge \dots \wedge v_{j+m}
 \end{align*}
 of $\bigwedge^{m+1} V$.
\end{defn}

We will show :
\begin{prop}\label{p:DenomProj}
 The projection map $\pi:\Sym^2(V_\Spin)\longrightarrow  \bigwedge^{m+1}\ V$ takes $\D_{(j)}$ to $v^\wedge_{(j)}$ and $\N_{(j)}$ to $v_{(j),+}^{\wedge}$.
\end{prop}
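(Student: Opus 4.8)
\textbf{Proof proposal for Proposition \ref{p:DenomProj}.}

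The plan is to compute the image of $\D_{(j)}$ and $\N_{(j)}$ under $\pi$ by tracing them through the composite $\iota_{V_\Spin}$, $\kappa_\pm\inv$, $\pr_{\wedge^m}$, $c$, $d$ explicitly on the standard basis. The first step is to understand $\iota_{V_\Spin}$ on a symmetric product $w_\lambda w_\mu$: by Lemma \ref{l:dual} its image in $\End(V_\Spin)$ is (a symmetrization of) the rank-one operator $(-1)^{|\lambda|}w_{\PD(\lambda)}^*\otimes w_\mu$. So $\D_{(j)}$ maps to $\sum_I (-1)^{s(I)}(-1)^{|\rho_{m+1-j}^I|}\,w^*_{\PD(\rho_{m+1-j}^I)}\otimes w_{\mu_{m+1-j}^I}$ (plus the symmetric term with the roles swapped), and similarly for $\N_{(j)}$. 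The key observation to set up is that the partitions $\rho_l^I$ and $\mu_l^I$ are "Poincar\'e-dual-complementary" in a precise combinatorial sense: removing lines from the staircase $\rho_l$ and adding the corresponding rows to the maximal partition $\mu_l$ are dual operations, so that $\PD(\rho_{m+1-j}^I)$ and $\mu_{m+1-j}^I$ have index sets (subsets of $\{1,\dots,m\}$) that together assemble into the index set of the target wedge vector $v^\wedge_{(j)}=v_j\wedge\dots\wedge v_{j+m}$. This is the combinatorial heart of the argument and I expect it to require careful bookkeeping with the bijection between strict partitions in $m\times m$ and subsets of $\{1,\dots,m\}$.

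Next I would identify, under $\kappa_\pm\inv:\End(V_\Spin)\to\Cl^\pm(V)\cong\bigoplus\bigwedge^{2k+\varepsilon}V$, the image of a rank-one operator $w_K^*\otimes w_L$ in terms of products of Clifford generators $v_i$ and $\bar v_i$. The point is that $w_K^*\otimes w_L$ — realised in the Clifford algebra — is, up to scalar, a product of creation operators $v_i$ ($i\in L$) and annihilation operators $\bar v_i$ ($i\in K$) times the appropriate power of $v_{m+1}$, and after anti-symmetrization this lands in $\bigwedge^\bullet V$ as (a sum of terms including) $v_L\wedge \bar v_K$ up to sign, where $\bar v_i=v_{2m+2-i}$. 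Then $\pr_{\wedge^m}$ keeps only the degree-$m$ part; for the terms coming from $\D_{(j)}$ the "creation part" has size $|L|=|\mu_{m+1-j}^I|$-rows and the "annihilation part" has size $|K|$-rows, and one checks these always total $m$ on the nose for the surviving terms (this is where the index $2,\dots,m$ for $j$ and the shape of $\rho_l,\mu_l$ matter), while all off-diagonal cancellations among the $2^{m+1-j}$ terms in the sum $\sum_I(-1)^{s(I)}$ conspire — via the signs $(-1)^{s(I)}$ — to leave exactly one monomial $v_K\wedge\dots$ in $\bigwedge^m V$. Applying $c$ (contraction against the top form) and $d$ (the quadratic-form identification $V^*\cong V$ made explicit at the end of Subsection \ref{s:notations}) then turns this surviving wedge monomial into $v_j\wedge\dots\wedge v_{j+m}\in\bigwedge^{m+1}V$, with all $\epsilon(i)$ and sign factors accounted for. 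The same computation with $\rho_{l,+},\mu_{l,+}$ in place of $\rho_l,\mu_l$ produces $v_{(j),+}^\wedge$.

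I expect the main obstacle to be the sign and cancellation analysis: verifying that the alternating sum $\sum_I(-1)^{s(I)}$ of $2^{m+1-j}$ rank-one operators, after passing through anti-symmetrization and projecting to the middle exterior power, collapses to a single clean wedge monomial rather than a messy combination. A clean way to organise this is to argue by equivariance and a highest-weight reduction: $\pi$ is the unique (up to scalar) $\so(V)$-map $\Sym^2(V_\Spin)\to\bigwedge^{m+1}V$, and $\D_{(j)},\N_{(j)}$ can be obtained from the extreme-weight vector $\D_{(2)}$-type expression (or from $w_\emptyset^2\mapsto$ highest weight vector) by applying lowering operators, so it suffices to (i) normalise the scalar by checking $\pi$ on one convenient vector — e.g. compute $\pi(w_{\rho_m} w_{\mu_m})$ or $\pi(w_\emptyset w_\emptyset)$ directly — and (ii) show that $\D_{(j)}$ and $\N_{(j)}$ are themselves weight vectors of the correct weight that are not in the kernel. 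Step (ii), that $\D_{(j)}$ has the weight of $v^\wedge_{(j)}$, is immediate from the combinatorial duality of $\rho_l^I$ and $\mu_l^I$ noted above (the weights of $w_{\rho_l^I}$ and $w_{\mu_l^I}$ add up to a fixed value independent of $I$); the non-vanishing then follows because the $I=\emptyset$ term already supplies a nonzero contribution to that weight space and no other term of $\D_{(j)}$ can cancel the leading monomial. This equivariance shortcut avoids grinding through all $2^{m+1-j}$ terms, reducing the problem to a single explicit Clifford-algebra computation plus a weight count.
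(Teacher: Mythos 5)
Your first two paragraphs sketch essentially the route the paper itself takes --- push $\D_{(j)}$ into $\End(V_\Spin)$ via Lemma \ref{l:dual}, identify the resulting endomorphism as an explicit element of $\Cl(V)$, extract the degree-$m$ component, and then apply $c$ and $d$ --- but the entire content of the proposition is the cancellation and sign analysis that you explicitly defer. In the paper this is carried out by exhibiting a candidate Clifford element (Proposition \ref{p:DinCl}) and verifying that it acts on every basis vector $w_L$ of $V_\Spin$ exactly as $\iota_{V_\Spin}(\D_{(j)})$ does (Lemmas \ref{l:I-Ibar} and \ref{l:tj}); its degree-$m$ part is then a single monomial (Corollary \ref{coro:SymToWedge}), and the remaining signs are tracked through $c$ and $d$.

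The shortcut you lean on to avoid this computation has a genuine gap. Writing $\epsilon_1,\dotsc,\epsilon_m$ for the standard weights of the torus (not the signs $\epsilon(i)$), the weight of $v^\wedge_{(j)}=v_j\wedge\dots\wedge v_{j+m}$ is $(\epsilon_j+\dots+\epsilon_m)-(\epsilon_{m+2-j}+\dots+\epsilon_m)$, which for $2\le j\le m$ has vanishing coordinates and hence is \emph{not} an extreme weight of $\bigwedge^{m+1}V\cong V_{2\omega_m}$ (the extreme weights are $\pm\epsilon_1\pm\dots\pm\epsilon_m$). The corresponding weight space therefore has dimension greater than one: for $j=2$, both $v_2\wedge\dots\wedge v_{m+2}$ and $v_1\wedge v_2\wedge\dots\wedge v_{m-1}\wedge v_{m+1}\wedge \bar v_1$ have weight $\epsilon_2+\dots+\epsilon_{m-1}$. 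Consequently, knowing that $\D_{(j)}$ is a weight vector of the correct weight with $\pi(\D_{(j)})\neq 0$ only places $\pi(\D_{(j)})$ somewhere in a multi-dimensional weight space; it does not identify it with $v^\wedge_{(j)}$, let alone with coefficient exactly $1$, which is what the proposition asserts and what the application to Proposition \ref{p:SymToMinor} requires. The claim that $\D_{(j)}$ is obtained from an extreme-weight vector by applying lowering operators is likewise unproved, and because of these weight multiplicities making it precise would require a computation of essentially the same size as the direct one; even the asserted non-vanishing is not immediate, since the projection onto the degree-$m$ component is precisely where the $2^{m+1-j}$ terms of the alternating sum could cancel. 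As it stands, your argument shows at most that $\pi(\D_{(j)})$ lies in the right weight space, so the explicit Clifford-algebra computation cannot be bypassed.
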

We will in fact prove this proposition only for the denominators $\D_{(j)}$, the case of the numerators $\N_{(j)}$ being extremely similar. 

\begin{defn} 
 If $I= \{ 1 \leq i_1 < \dots < i_r \leq 2m+1 \}$, we define $v_I$ to be the product $v_{i_1} \cdot \dots \cdot v_{i_r}$ in $\Cl(V)$. For $I=\{j,j+1,\dotsc, j+m\}$ we also denote $v_I$ by $v_{(j)}$, so $v_{(j)}=v_j v_{j+1}\cdots  v_{j+m}$. Moreover, if $L$ is a subset of $\{ j, \dots, m \}$, we write $v_{(j)}^L$ for the Clifford algebra element obtained from the product $v_{(j)}$
 by removing all of the factors $v_l$ and $\bar v_l=v_{2m+2-l}$ for which $l\in L$.
\end{defn}

\begin{lemma} 
 The map $\iota_{V_\Spin}: \Sym^2(V_\Spin)\hookrightarrow \End(V_\Spin)$ maps $\D_{(j)}$ to
 \begin{equation}
  \label{eq:denom2end}
  \beta_{m,j} \cdot \sum_{I } \left[ w_{\mu_{j-1}^I}^* \otimes w_{\mu_{m+1-j}^I} + (-1)^{(m+1-j)(j-1)} w_{\rho_{j-1}^I}^* \otimes w_{\rho_{m+1-j}^I} \right]
 \end{equation}
 where 
 \[
  \beta_{m,j}:=\frac{(-1)^{\frac{(m+1-j)(m+2-j)}{2}}}{2}
 \]
 and the sum is over all subsets $I$ of $\{1,\dots,m+1-j\}$.
\end{lemma}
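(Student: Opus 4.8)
The plan is to compute the image $\iota_{V_\Spin}(\D_{(j)})$ directly by unwinding the definition of $\iota_{V_\Spin}$ on each term of $\D_{(j)}$, and then recognising the answer as the stated endomorphism. Recall that $\iota_{V_\Spin}$ sends a symmetric tensor $w_\mu w_\nu \in \Sym^2(V_\Spin)$ to $\delta(w_\mu)\otimes w_\nu + \delta(w_\nu)\otimes w_\mu \in V_\Spin^*\otimes V_\Spin = \End(V_\Spin)$, and that by Lemma~\ref{l:dual} we have $\delta(w_\kappa) = (-1)^{|\kappa|} w_{\PD(\kappa)}^*$. So the first step is to apply this termwise to
\[
 \D_{(j)} = \sum_I (-1)^{s(I)} w_{\rho_{m+1-j}^I} w_{\mu_{m+1-j}^I},
\]
obtaining a sum of terms $(-1)^{s(I)}\bigl[(-1)^{|\rho_{m+1-j}^I|} w_{\PD(\rho_{m+1-j}^I)}^*\otimes w_{\mu_{m+1-j}^I} + (-1)^{|\mu_{m+1-j}^I|} w_{\PD(\mu_{m+1-j}^I)}^*\otimes w_{\rho_{m+1-j}^I}\bigr]$.

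The heart of the argument is then a combinatorial identity on strict partitions: one must show that Poincar\'e duality inside the $m\times m$ box interchanges the two families of partitions appearing here in a controlled way. Concretely, I expect that $\PD(\rho_{m+1-j}^I) = \mu_{j-1}^{I'}$ and $\PD(\mu_{m+1-j}^I) = \rho_{j-1}^{I''}$ for suitable re-indexings $I\mapsto I', I\mapsto I''$ of the subsets of $\{1,\dots,m+1-j\}$ by subsets of $\{1,\dots,j-1\}$ (or by subsets of the same index set, with an appropriate relabelling of rows). One then has to track how the sign $(-1)^{s(I)}$ transforms under this relabelling and how the exponents $|\rho_{m+1-j}^I|$ and $|\mu_{m+1-j}^I|$ combine with it. The claim is that after re-summing, the first family of terms collects into $\sum_I w_{\mu_{j-1}^I}^*\otimes w_{\mu_{m+1-j}^I}$ up to the global scalar $\beta_{m,j}$, and the second family into $(-1)^{(m+1-j)(j-1)}\sum_I w_{\rho_{j-1}^I}^*\otimes w_{\rho_{m+1-j}^I}$ up to the same scalar; the prefactor $\beta_{m,j} = \tfrac12 (-1)^{(m+1-j)(m+2-j)/2}$ should emerge as $\tfrac12$ times the sign $(-1)^{|\rho_{m+1-j}|}$ coming from $\delta$ on the ``base'' term (with $I=\emptyset$), since $|\rho_{m+1-j}| = \binom{m+1-j+1}{2} = \tfrac{(m+1-j)(m+2-j)}{2}$, and the extra factor $\tfrac12$ presumably from a normalisation in the embedding $\Sym^2\hookrightarrow V_\Spin\otimes V_\Spin$ or from $\delta$ itself.

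The main obstacle is precisely this sign bookkeeping. The subsets $I$ index rows that are \emph{removed} from $\rho_{m+1-j}$ but rows that are \emph{added} to $\mu_{m+1-j}$, and under Poincar\'e duality ``removing row $j$ from the staircase'' should correspond to ``adding a row to the dual staircase'' — so the two descriptions $\rho^I_\bullet$ and $\mu^I_\bullet$ are genuinely dual to one another, which is exactly why the same index set $I$ appears on both sides of each term. Verifying that $\PD$ really does swap $\rho^I$ with $\mu^I$ (with matching index set, not just matching cardinality) and that the parity $s(I)$ is preserved (so that $(-1)^{s(I)}$ can be pulled through) is the step that requires care; once that is done, the remaining signs are a finite computation with $|\lambda| + |\PD(\lambda)| = m^2 - (\text{something})$ type relations and the identity is forced. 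I would set up the $\PD$-correspondence on subsets first as a standalone sublemma, check it on a small example ($m=3$ or $m=4$) to fix conventions, and only then assemble the signed sum.
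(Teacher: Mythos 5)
Your proposal follows the paper's proof exactly: the paper sends $w_{\rho_{m+1-j}^I} w_{\mu_{m+1-j}^I}$ to $\tfrac12\bigl(w_{\rho_{m+1-j}^I}\otimes w_{\mu_{m+1-j}^I}+w_{\mu_{m+1-j}^I}\otimes w_{\rho_{m+1-j}^I}\bigr)$ (so the $\tfrac12$ in $\beta_{m,j}$ is indeed the symmetrisation normalisation you suspected) and then applies $\delta\otimes\id$ termwise via Lemma~\ref{l:dual}. The bookkeeping you defer is precisely the content of the paper's two displayed formulas, $\delta(w_{\rho_{m+1-j}^I})=(-1)^{\frac{(m+1-j)(m+2-j)}{2}-s(I)}w_{\mu_{j-1}^I}^*$ and $\delta(w_{\mu_{m+1-j}^I})=(-1)^{\frac{m(m+1)}{2}-\frac{j(j-1)}{2}+s(I)}w_{\rho_{j-1}^I}^*$; that is, $\PD$ interchanges the two families with the \emph{same} index set $I$ (no re-indexing), the $s(I)$-dependence cancels the prefactor $(-1)^{s(I)}$, and what remains is the uniform constant $\beta_{m,j}$ together with the relative sign $(-1)^{(m+1-j)(j-1)}$, exactly as you anticipated.
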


\begin{proof}
 First $w_{\rho_{m+1-j}^I} w_{\mu_{m+1-j}^I}$ maps to 
 \[
  \frac{1}{2}(w_{\rho_{m+1-j}^I} \otimes w_{\mu_{m+1-j}^I} + w_{\mu_{m+1-j}^I} \otimes w_{\rho_{m+1-j}^I}) \in V_\Spin \otimes V_\Spin.
 \]
 Then according to Lemma \ref{l:dual} :
 \begin{align*}
  w_{\rho_{m+1-j}^I} &\mapsto (-1)^{\frac{(m+1-j)(m+2-j)}{2}-s(I)} w_{\mu_{j-1}^I}^* \in V_\Spin^* \\
  w_{\mu_{m+1-j}^I} &\mapsto (-1)^{\frac{m(m+1)}{2}-\frac{j(j-1)}{2}+s(I)} w_{\rho_{j-1}^I}^* \in V_\Spin^*,
 \end{align*}
 hence the result.
\end{proof}

We now need to map the element \eqref{eq:denom2end} to the Clifford algebra of $V$. 

\begin{proposition} 
\label{p:DinCl}
\begin{multline}
\D_{(j)} \mapsto 
\frac{(-1)^{\frac{m(m+1)}{2}}}{2} \left[ 2 v_{1,\dots,m+1-j,2m+3-j,\dots,2m+1} \right. + \\ \left . \sum_{I \subsetneq \{ 1,\dots,m+1-j \}} \left( \prod_{l\in \{ 1,\dots,m+1-j \} \setminus I} (-1)^l \right) v_{I\cup\{ 2m+3-j,\dots,m+j\} \cup \overline{I}}\right] \in \Cl(V).
\end{multline}
\end{proposition}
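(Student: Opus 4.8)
The plan is to compute the image of $\D_{(j)}$ under the composite
\[
\Sym^2(V_\Spin) \xhookrightarrow{\iota_{V_\Spin}} \End(V_\Spin) \xrightarrow{\kappa_{\pm}\inv} \Cl(V)
\]
directly, starting from the description of $\iota_{V_\Spin}(\D_{(j)})$ in the previous lemma, namely
\[
\beta_{m,j}\cdot\sum_{I}\left[ w_{\mu_{j-1}^I}^* \otimes w_{\mu_{m+1-j}^I} + (-1)^{(m+1-j)(j-1)} w_{\rho_{j-1}^I}^* \otimes w_{\rho_{m+1-j}^I}\right].
\]
The key point is that an elementary tensor $w_K^* \otimes w_L \in \End(V_\Spin)$, viewed as the rank-one operator sending $w_K \mapsto w_L$ and all other basis vectors to $0$, has an explicit preimage in $\Cl(V)$: it is (up to an explicit scalar depending only on $|K|$, coming from the normalization of the Clifford action and the $v_{m+1}$-eigenvalue) the product of the ``creation'' generators $v_\ell$ for $\ell \in L$ with the ``annihilation'' generators $\bar v_\ell$ for $\ell \in K$, suitably ordered, interleaved with a power of $v_{m+1}$ when $|K|+|L|$ is odd versus even. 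So first I would record this as an auxiliary identity: $\kappa_\pm(v_{L}\,\bar v_{K}\,v_{m+1}^{\varepsilon}) = (\pm\text{sign})\, w_L^*\otimes w_K^*$-type expression, pin down the exact signs by testing on basis vectors, and invert.

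Next I would substitute this into the sum. For the first family of terms, $w_{\mu_{j-1}^I}^*\otimes w_{\mu_{m+1-j}^I}$: the partition $\mu_{j-1}^I$ corresponds to a subset of $\{1,\dots,m\}$ obtained from the top rows $\{1,\dots,j-1\}$ together with the extra rows indexed by $I\subset\{1,\dots,m+1-j\}$, while $\mu_{m+1-j}^I$ is complementary in the relevant range; so the annihilation part $\bar v_K$ with $K$ the subset for $\mu_{j-1}^I$, combined with the creation part $v_L$ with $L$ the subset for $\mu_{m+1-j}^I$, produces a Clifford monomial supported on indices $\{1,\dots,m+1-j\}\cup\{2m+3-j,\dots,m+j\}$ plus an $I$-dependent ``$I\cup\overline I$'' piece — and this is exactly the shape of the claimed answer, where $I$ ranges over proper subsets and the product $\prod_{l\notin I}(-1)^l$ collects the reordering signs. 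The second family, $w_{\rho_{j-1}^I}^*\otimes w_{\rho_{m+1-j}^I}$, with the extra sign $(-1)^{(m+1-j)(j-1)}$, should either contribute the same monomials (doubling the $I=\emptyset$ or full-$I$ term, which is where the leading factor $2\,v_{1,\dots,m+1-j,2m+3-j,\dots,2m+1}$ comes from) or cancel, and I would check that the $\rho$-contributions and $\mu$-contributions match term-by-term after the Clifford identities $v_\ell\bar v_\ell + \bar v_\ell v_\ell = \epsilon(\ell)$ and $v_{m+1}^2=\tfrac12$ are used to reduce products to standard ordered monomials.

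The main obstacle will be bookkeeping the signs: there are the signs $(-1)^{|\lambda|}$ from $\delta$ in $\iota_{V_\Spin}$ (already absorbed into the stated Lemma), the $(-1)^{s(I)}$ from the definition of $\D_{(j)}$, the signs $\epsilon(\ell)=(-1)^{m+1-\ell}$ from the bilinear form, the reordering signs when moving $v_\ell$ and $\bar v_\ell = v_{2m+2-\ell}$ into increasing index order inside $\Cl(V)$, and the parity-dependent factors $\frac{(-1)^{|I|}}{\sqrt2}$ from the $v_{m+1}$-action — plus the $\beta_{m,j}$ and $(-1)^{\frac{m(m+1)}{2}}$ prefactors which must come out exactly as stated. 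My strategy for controlling this is to do the whole computation first for the single ``extreme'' term $I=\emptyset$ (and separately $I=\{1,\dots,m+1-j\}$) to fix all global constants, then argue that passing from $I$ to $I\cup\{l\}$ for $l\in\{1,\dots,m+1-j\}$ multiplies the Clifford monomial by exactly $(-1)^l$ and changes $(-1)^{s(I)}$ by $(-1)^l$, so the two cancel against the claimed $\prod_{l\notin I}(-1)^l$ up to the overall normalization — an inductive/telescoping argument that isolates the delicate sign-tracking into one clean step rather than spreading it across $2^{m+1-j}$ terms. The numerator case $\N_{(j)}\mapsto v_{(j),+}^\wedge$ is then handled by the identical argument with $\rho_{l,+}$, $\mu_{l,+}$ in place of $\rho_l$, $\mu_l$, as indicated.
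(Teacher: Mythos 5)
Your overall direction is the reverse of the paper's: you want to compute the Clifford preimage of $\iota_{V_\Spin}(\D_{(j)})$ forwards, by inverting $\kappa_\pm$ on each elementary tensor, whereas the paper simply names the right-hand side $A_{(j)}$, computes $A_{(j)}\cdot w_L$ for every basis vector $w_L$ (via its Lemmas on $v_{I\cup\overline I}$ and $t_{(j)}=v_{2m+3-j}\cdots v_{m+j}$, with the alternating sums $\sum_I(-1)^{|I|}$ collapsing), and matches the result against the endomorphism formula \eqref{eq:denom2end}, using injectivity of the Clifford action. A forward computation is in principle possible, but your version of it rests on a false key step.

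The gap is your ``auxiliary identity'': a single ordered monomial $v_L\,\bar v_K\,v_{m+1}^{\varepsilon}$ is \emph{not}, up to scalar, the rank-one operator $w_K^*\otimes w_L$. Acting on a basis vector $w_M$ one gets $\bar v_K\cdot w_M\neq 0$ whenever $K\subset M$ (not only for $M=K$), and then $v_L$ survives whenever $L\cap(M\setminus K)=\emptyset$; e.g.\ $v_1\cdot w_{\{2\}}=w_{\{1,2\}}\neq 0$ even though $K=\emptyset$, $L=\{1\}$. So such a monomial has large rank. The genuine Clifford preimage of $w_K^*\otimes w_L$ must include ``projector'' factors of the form $\bar v_i v_i$ (suitably normalized by $\epsilon(i)$) for the indices $i\notin K$, to kill all $w_M$ with $M\supsetneq K$; expanded out, it is a sum of roughly $2^{m-|K\cup L|}$ monomials, not one. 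Consequently the term-by-term dictionary you propose between the summands of \eqref{eq:denom2end} and the monomials $v_{I\cup\{2m+3-j,\dots,m+j\}\cup\overline I}$ does not exist: the content of Proposition \ref{p:DinCl} is precisely that after summing over $I$ with the signs $(-1)^{s(I)}$, the huge number of projector-expansion monomials recombines into the short expression on the right, and your extreme-case-plus-telescoping argument (which assumes each $I$ contributes a single monomial, picked up by a factor $(-1)^l$ when $l$ is added to $I$) never confronts this recombination. To repair the plan you would either have to carry out the full projector expansions and organize the cancellation—essentially redoing the paper's computation in the harder direction—or switch to the paper's strategy of verifying the claimed element by letting it act on all $w_L$.
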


\begin{proof}
 We assume $j > \frac{m+1}{2}$, the other case being symmetric. For convenience, let us denote the right-hand side as $A_{(j)} \in \Cl(V)$. Because of the definition of the Clifford algebra :
 \[
  v_{1,\dots,m+1-j,2m+3-j,\dots,2m+1} = (-1)^{m(m+1-j)} v_{\{1,\dots,m+1-j\} \cup \overline{\{ 1,\dots,m+1-j\}}}\, t_{(j)},
 \]
 where $t_{(j)} = v_{2m+3-j} \dots v_{m+j}$. Similarly 
 \[
  v_{I\cup\{ 2m+3-j,\dots,m+j\} \cup \overline{I}} = (-1)^{m|I|} v_{I \cup \overline{I}}\, t_{(j)}.
 \]
 We will use two lemmas :
 \begin{lemma}
 \label{l:I-Ibar}
  Let $I$ be a subset of $\left\{ 1, \dots, m \right\}$. Then
  \[
   v_{I\cup \overline{I}} \mapsto \left( \prod_{i \in I} \epsilon(i) \right) \sum_{L} w_L^* \otimes w_L \in \End(\Vspin),
  \]
  where the sum is over all subsets $L$ of $\left\{ 1, \dots, m \right\}$ containing $I$.
 \end{lemma}
 
 \begin{proof}[Proof of lemma \ref{l:I-Ibar}]
  First notice that 
  \begin{align*}
   v_{\overline{i}} \cdot w_L = \begin{cases}
                                 0 & \text{if $i\not\in L$} \\
                                 (-1)^{\#\{ l \in L \mid l < i \}} \epsilon(i) w_{L \setminus \{ i \}} & \text{otherwise,}
                                \end{cases}
  \end{align*}
  and
  \begin{align*}
   v_i v_{\overline{i}} \cdot w_L = \begin{cases}
                                     0 & \text{if $i\not\in L$} \\
                                     \epsilon(i) w_{L} & \text{otherwise}.
                                    \end{cases}
  \end{align*}
  Hence $v_{I\cup \overline{I}}$ is zero unless $L \supset I$. Now assume $L\supset I$ and write $I=\{ i_1 < i_2 < \dots < i_r \}$. From the definition of the Clifford algebra, it follows that $v_{I\cup \overline{I}}=\prod_{p=1}^r v_{i_p} v_{\overline{i_p}}$. Hence :
  \[
   v_{I\cup \overline{I}} \cdot w_L = \left(\prod_{p=1}^r \epsilon(i_p)\right) w_L. 
  \]
  The claim follows.
 \end{proof}

 \begin{lemma}
 \label{l:tj}
  The element $t_{(j)} = v_{2m+3-j} \dots v_{m+j}$ of $\Cl(V)$ maps to
  \[
   \left( \prod_{p=m+2-j}^{j-1} \epsilon(p) \right) \sum_{K_1,K_2} (-1)^{m|K_1|} w_{K_1 \cup \{ m+2-j, \dots, j-1 \} \cup K_2}^* \otimes w_{K_1 \cup K_2} \in \End(\Vspin),
  \]
  where $K_1$ is any subset of $\{1,\dots,m+1-j\}$ and $K_2$ is any subset of $\{ j,\dots,m \}$.
 \end{lemma}
 
 \begin{proof}[Proof of lemma \ref{l:tj}]
  As in the proof of Lemma \ref{l:I-Ibar}, we notice that $t_{(j)} \cdot w_L=0$ if $L \not \supset \{ m+2-j, \dots, j-1 \}$. Now write $L=L_1\cup \{ m+2-j, \dots, j-1 \} \cup L_2$, where $L_1 \subset \{1,\dots,m+1-j\}$ and $L_2 \subset \{ j,\dots,m \}$. We have 
  \[
   v_{m+j} \cdot w_L = (-1)^{m|L_1|} \epsilon(m+2-j) w_{L_1 \cup \{ m+3-j, \dots, j-1 \} \cup L_2}.
  \]
  Recursively, we obtain :
  \[
   t_{(j)} \cdot w_L = (-1)^{m|L_1|} \left( \prod_{p=m+2-j}^{j-1} \epsilon(p) \right) w_{L_1 \cup L_2},
  \]
  hence the lemma.
 \end{proof}
 
 Now to prove Proposition \ref{p:DinCl}, first assume $L=\{ 1,\dots, j-1 \} \cup L_2$, where $L_2 \subset \{ j, \dots, m\}$. Then
 \[
  v_{1,\dots,m+1-j,2m+3-j,\dots,2m+1} \cdot w_L = \left( \prod_{p=1}^{j-1} \epsilon(p) \right) w_{L_1 \cup L_2},
 \]
 and
 \[
  \left( \prod_{l \in \{ 1,\dots,m+1-j \} \setminus I} (-1)^l \right) v_{I\cup\{ 2m+3-j,\dots,m+j\} \cup \overline{I}} \cdot w_L 
 \]
 is equal to
 \[
  \left( \prod_{p=1}^{j-1} \epsilon(p) \right) (-1)^{|I|}(-1)^{m+1-j} w_{L_1 \cup L_2}.
 \]
 Hence 
 \begin{align*}
  A_{(j)} \cdot w_L &= \left( \prod_{p=1}^{j-1} \epsilon(p) \right) \left[ 2 + (-1)^{m+1-j}\sum_{I \subsetneq \{ 1,\dots,m+1-j\} } (-1)^{|I|} \right] w_{L_1 \cup L_2} \\
		    &= \left( \prod_{p=1}^{j-1} \epsilon(p) \right) w_{L_1 \cup L_2}.
 \end{align*}
 Now assume $L=L_1 \cup \{ m+2-j,\dots, j-1 \} \cup L_2$, where $L_1 \subsetneq \{1,\dots,m+1-j \}$ and $L_2 \subset \{ j, \dots, m\}$. Then
 \[
  A_{(j)} \cdot w_L =  \left( \prod_{p=1}^{j-1} \epsilon(p) \right) (-1)^{m|L_1|} (-1)^{(m+1)(m+1-j)} \sum_{I \subset L_1} (-1)^{|I|}  w_{L_1 \cup L_2}.
 \]
 Finally :
 \begin{equation*}
  A_{(j)} \cdot w_L = \begin{cases}
                       0 & \text{if $L_1 \neq \emptyset$} \\
                       \left( \prod_{p=1}^{j-1} \epsilon(p) \right) (-1)^{(m+1)(m+1-j)} w_{L_2} & \text{otherwise.}
                      \end{cases}
 \end{equation*}
 Looking precisely at the expression of $\D_{(j)}$ in $\End(\Vspin)$, this concludes the proof of the proposition.
\end{proof}

\begin{corollary}
\label{coro:SymToWedge}
 We have :
 \[
  \pr_{\wedge^{m}} \circ \kappa_{\pm}^{-1} \circ \iota_{\Vspin} (\D_{(j)}) = (-1)^{\frac{m(m+1)}{2}} v_1 \wedge \dots \wedge v_{m+1-j} \wedge v_{2m+3-j} \wedge \dots \wedge v_{2m+1}
 \]
 where $\kappa_{\pm}$ is $\kappa_-$ if $m$ is odd and $\kappa_+$ otherwise.
\end{corollary}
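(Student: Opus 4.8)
The plan is to derive Corollary \ref{coro:SymToWedge} as a direct consequence of Proposition \ref{p:DinCl}, by tracking the element $\D_{(j)}\in\Cl(V)$ displayed there through the remaining maps $\kappa_\pm^{-1}$ and $\pr_{\wedge^m}$. Recall that $\iota_{\Vspin}(\D_{(j)})\in\End(\Vspin)$ is, by Proposition \ref{p:DinCl}, the image of the Clifford algebra element
\[
 \frac{(-1)^{\frac{m(m+1)}{2}}}{2}\left[2\,v_{1,\dots,m+1-j,2m+3-j,\dots,2m+1}+\sum_{I\subsetneq\{1,\dots,m+1-j\}}\Bigl(\prod_{l\notin I}(-1)^l\Bigr)v_{I\cup\{2m+3-j,\dots,m+j\}\cup\overline I}\right].
\]
Since $\kappa_\pm$ is precisely the composition of the antisymmetrization isomorphism $\bigwedge^\bullet V\to\Cl(V)$ with the Clifford action, applying $\kappa_\pm^{-1}$ amounts to rewriting each Clifford monomial $v_K$ (for $K$ a set of indices) back in $\bigwedge^\bullet V$. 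First I would observe that for any \emph{set} $K=\{k_1<\dots<k_r\}$ of distinct indices with no pair $\{k,2m+2-k\}$ both present, the product $v_{k_1}\cdots v_{k_r}$ in $\Cl(V)$ already equals the image of $v_{k_1}\wedge\dots\wedge v_{k_r}$ under antisymmetrization — this is because distinct $v_k$'s anticommute unless they are dual under $\Phi$, so no lower-order correction terms appear. Hence $\kappa_\pm^{-1}(v_K)=v_{k_1}\wedge\dots\wedge v_{k_r}$ for such $K$.

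Next I would sort the Clifford monomials in the bracket by their degree, i.e. by $|K|$. The first term $v_{1,\dots,m+1-j,2m+3-j,\dots,2m+1}$ has degree $(m+1-j)+(j-1)=m$, so it lives in $\bigwedge^m V$ and survives $\pr_{\wedge^m}$ (with coefficient $1$ after dividing by $2$... more precisely the factor $2$ in front cancels the $\frac12$). Every other term $v_{I\cup\{2m+3-j,\dots,m+j\}\cup\overline I}$ has degree $2|I|+(j-1)$; since $I\subsetneq\{1,\dots,m+1-j\}$ we have $2|I|\le 2(m-j)$, and in fact $2|I|+(j-1)$ is always strictly less than $m$ when $|I|<m+1-j$ — wait, one must check parity: $2|I|+j-1\equiv m\pmod 2$ forces $m-j+1$ even, but regardless, $\pr_{\wedge^m}$ projects onto the single graded piece $\bigwedge^m V$, and since these extra terms all have degree $\ne m$ (they have degree $2|I|+j-1$ with $|I|\le m-j$, i.e. degree $\le 2m-j-1$, and the relevant degrees appearing are in the range $\{j-1,j+1,\dots\}$ — none equal to $m$ unless $|I|=\frac{m+1-j}{2}$... hmm). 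Here is the point requiring care: one needs that the \emph{only} term of the bracket whose Clifford degree equals $m$ is the first one. The Clifford degrees occurring are $m$ (first term) and $2|I|+j-1$ for $I\subsetneq\{1,\dots,m+1-j\}$. For $2|I|+j-1=m$ we would need $|I|=\frac{m+1-j}{2}$, which can happen when $m+1-j$ is even. However, those middle-degree terms must then also be killed, and this is where the combinatorial sum $\sum_{|I|=\text{fixed}}\prod_{l\notin I}(-1)^l$ vanishes — this is exactly the cancellation already exploited inside the proof of Proposition \ref{p:DinCl} (the identity $2+(-1)^{m+1-j}\sum_{I\subsetneq}(-1)^{|I|}=1$ and its refinements). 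I expect this cancellation of the degree-$m$ middle terms to be the main obstacle, and the cleanest way around it is to not re-derive it but rather to invoke that $\iota_{\Vspin}(\D_{(j)})$ has already been identified in Proposition \ref{p:DinCl} with a \emph{specific} endomorphism, and separately that $\kappa_\pm^{-1}$ applied to the specific Clifford element $v_{1,\dots,m+1-j,2m+3-j,\dots,2m+1}$ plus lower/other-degree junk, after $\pr_{\wedge^m}$, retains only $v_1\wedge\dots\wedge v_{m+1-j}\wedge v_{2m+3-j}\wedge\dots\wedge v_{2m+1}$.

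Concretely, the argument I would write is: apply $\kappa_\pm^{-1}$ to the Clifford-algebra expression in Proposition \ref{p:DinCl}, using that $\kappa_\pm^{-1}(v_K)=v_{k_1}\wedge\dots\wedge v_{k_r}$ for the simple-product monomials occurring (all indices distinct, and in each monomial $K=I\cup\{2m+3-j,\dots,m+j\}\cup\overline I$ no index coincides with its $\Phi$-dual since $I\subset\{1,\dots,m+1-j\}$, $\overline I\subset\{m+j,\dots,2m+1\}$, and the block $\{2m+3-j,\dots,m+j\}$ is self-dual as a set but contains no dual pair — each element $p$ has dual $2m+2-p$ which for $p$ in this block is again in this block only if $p=m+1$, excluded since $j\ge 2$). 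Then apply $\pr_{\wedge^m}$: the first term contributes $\tfrac12\cdot 2=1$ times $v_1\wedge\dots\wedge v_{m+1-j}\wedge v_{2m+3-j}\wedge\dots\wedge v_{2m+1}\in\bigwedge^m V$ up to the overall sign $(-1)^{m(m+1)/2}$; the terms indexed by $I\subsetneq\{1,\dots,m+1-j\}$ of Clifford degree $\ne m$ are annihilated by $\pr_{\wedge^m}$; and the terms of Clifford degree exactly $m$ (if any) sum to zero by the sign cancellation $\sum_{I:\,|I|=r}\prod_{l\notin I}(-1)^l$ over the relevant $r$, which is the same vanishing already established in the proof of Proposition \ref{p:DinCl}. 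Collecting the overall constant $(-1)^{m(m+1)/2}$ yields exactly the claimed formula. The numerator case $\N_{(j)}$, and the companion identities, follow by the identical bookkeeping with $\rho,\mu$ replaced by $\rho_+,\mu_+$ and $v_{(j)}$ replaced by $v_{(j),+}$.
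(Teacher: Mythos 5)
Your overall route is the paper's own (read off the $\bigwedge^m$-component of the Clifford expression in Proposition \ref{p:DinCl} using the antisymmetrization isomorphisms), but two of your key verifications are wrong as stated. First, the ``no dual pairs'' claim is false: every monomial $v_{I\cup\{2m+3-j,\dots,m+j\}\cup\overline I}$ contains the dual pair $v_i,\,v_{2m+2-i}$ for \emph{each} $i\in I$ --- that is precisely what $I\cup\overline I$ means --- and the leading monomial $v_{1,\dots,m+1-j,2m+3-j,\dots,2m+1}$ also contains the pairs $v_i,\bar v_i$ for $i\le\min(m+1-j,j-1)$, which is a nonempty range for all $2\le j\le m$ (for $m=2$, $j=2$ this monomial is just $v_1\bar v_1$). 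Consequently $\kappa_\pm^{-1}(v_K)$ is \emph{not} the pure wedge $v_{k_1}\wedge\dots\wedge v_{k_r}$; it is that wedge plus corrections in strictly lower wedge degrees of the same parity. What you actually need, and what is true, is only the weaker statement that for a product of $r$ distinct generators the component in $\bigwedge^r V$ is exactly $v_{k_1}\wedge\dots\wedge v_{k_r}$, all dual-pair corrections sitting in degrees $<r$ and hence being killed by $\pr_{\wedge^m}$ when $r=m$.

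Second, your degree bookkeeping for the remaining monomials is off, and this is where your argument genuinely fails to close. The block $\{2m+3-j,\dots,m+j\}$ consists of the $2j-m-2$ vectors $\bar v_{m+2-j},\dots,\bar v_{j-1}$, not of $j-1$ vectors; so in the regime $j>\tfrac{m+1}{2}$ in which Proposition \ref{p:DinCl} is proved, each monomial of the second family has $2|I|+2j-m-2\le m-2$ generators and therefore contributes nothing at all to $\bigwedge^m V$. No cancellation of ``middle terms of degree $m$'' is needed, and the cancellation you propose to import from the proof of Proposition \ref{p:DinCl} would not supply it anyway: the identities used there (such as $\sum_{I\subset L_1}(-1)^{|I|}=0$) concern the action on $\Vspin$, not the vanishing of degree-$m$ wedge components, so as written that step of your argument is an unproven assertion. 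With these two corrections the proof is immediate and coincides with the paper's: only the first monomial reaches wedge degree $m$, its degree-$m$ part is the full wedge with coefficient $1$, and the prefactor $\tfrac{(-1)^{m(m+1)/2}}{2}\cdot 2$ gives the stated sign. (For small $j$, say $j\le m/2$, one must use the symmetric form of Proposition \ref{p:DinCl} alluded to in its proof, since with the displayed formula read literally the terms $v_{I\cup\overline I}$ can reach degree $m$ without cancelling.)
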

 
\begin{proof}
 The result is a simple consequence of Proposition \ref{p:DinCl} and of the definition of the antisymmetrisation maps \eqref{e:evenPartsIso} and \eqref{e:oddPartsIso}.
\end{proof}
 
We can now prove Proposition \ref{p:DenomProj} :
\begin{proof}[Proof of Proposition \ref{p:DenomProj}]
 From Corollary \ref{coro:SymToWedge}, we know that $\D_{(j)}$ maps to $(-1)^{ \frac{m(m+1)}{2} } v_1 \wedge \dots \wedge v_{m+1-j} \wedge v_{2m+3-j} \wedge \dots \wedge v_{2m+1}$ in $\bigwedge^m V$. Now the latter element is mapped by the contraction $c$ to
 \[
  (-1)^{(m+1)(j-1)} v_{m+2-j}^* \wedge \dots \wedge v_{2m+2-j}^*.
 \]
 Then we map this to $\bigwedge^{m+1} V$ using the isomorphism $d$. We have
 \begin{align*}
  v_{m+2-j}^* \wedge \dots \wedge v_{2m+2-j}^* &\mapsto \left( \prod_{i=m+2-j}^m \epsilon(i) \right) \left( \prod_{k=j}^m \epsilon(k) \right) v_{j+m} \wedge v_{j+m+1} \wedge \dots \wedge v_j \\
   &\mapsto (-1)^{j^2+m^2-m j+1} v_{(j)}^\wedge.
 \end{align*}
 Now
 \[
  \D_{(j)} \mapsto v_{(j)}^\wedge,
 \]
 which concludes the proof.
\end{proof}

\section{Some relations in the quantum cohomology}
\label{s:relations}

In \cite{rietsch}, the second author proved an isomorphism between the quantum cohomology ring of $X=G^\vee/P^\vee$ and the Jacobi ring of the LG-model $(\mathcal{R},\mathcal F_h)$ (either at fixed quantum parameter $q=e^h$ as in Corollary~\ref{theo:LGmodel-G/P} or over the ring $\C[q,q\inv]$). By Theorem \ref{theo:W} together with Claim \ref{conj:iso} our LG-model $(\check{X},W_t)$ should be isomorphic to this one, and therefore related to the quantum cohomology ring of $LG(m)$ in the same way.
Therefore we expect the denominators appearing in the expression of $W_t$, once written with Schubert classes replacing the Pl\"ucker coordinates, to represent invertible elements in this quantum cohomology ring. 
We have a precise conjecture for which elements these are.

\begin{conjecture}
 The following relation holds in the quantum cohomology of $\LG(m)$ for all $1 \leq l \leq m-1$ :
 \begin{equation}
 \label{e:qrelation}
  \underset{J\subset\left\{1,\dots,l\right\}}{\sum}(-1)^{s(J)}\sigma_{\rho_l^{J}} \star \sigma_{\mu_l^J} = q^l.
 \end{equation}
\end{conjecture}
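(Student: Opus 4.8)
\textbf{Proof proposal for the conjectural relation \eqref{e:qrelation}.}

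The plan is to deduce this relation from the already-established identities by combining Theorem~\ref{theo:W}, Proposition~\ref{p:eis}, and Proposition~\ref{p:SymToMinor}, and then to interpret the resulting statement inside the Jacobi ring of $\mathcal F_h$, which by Corollary~\ref{theo:LGmodel-G/P} is the quantum cohomology ring of $\LG(m)$. Concretely, fix $1\le l\le m-1$ and set $j=m+1-l$, so that $\rho_l=\rho_{m+1-j}$ and $\mu_l=\mu_{m+1-j}$, and the left-hand side of \eqref{e:qrelation} is exactly the image of the element $\D_{(j)}\in\Sym^2(\Vspin)$ under the identification $\Vspin=H^*(\LG(m))$ followed by the quantum product. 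The first step is to observe, via Proposition~\ref{p:SymToMinor}, that the $l$-th denominator $\sum_J(-1)^{s(J)}\p_{\rho_l^J}\p_{\mu_l^J}$ evaluated on $\bar u_2$ equals the single minor $\Delta^{m+1,\dots,2m+1}_{m+2-j,\dots,2m+2-j}(\bar u_2)$, and similarly the numerator of the $l$-th summand of $W_t$ equals $\Delta^{m+1,\dots,2m+1}_{m+1-j,m+3-j,\dots,2m+2-j}(\bar u_2)$. Hence, after pulling back along $\Psi$, the $l$-th summand of $W_t$ becomes precisely the ratio of these two minors, which by the proof of Proposition in Section~\ref{s:minors} (the formula \eqref{eq:fj} with $j$ replaced by $m+1-l$, suitably shifted) is $f_{m+1-l}^*(\bar u_2)$ — i.e. the $(m+1-l)$-th "partial derivative coordinate" of the superpotential.

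The second step is to extract the relation from the critical-point description. By Theorem~\ref{theo:W} the pulled-back $W_t$ equals $\mathcal F_h=\sum_i e_i^*(u_1)+\sum_i f_i^*(\bar u_2)$, and by Proposition~\ref{p:eis} all $e_i^*(u_1)$ vanish for $i\le m-1$ while $e_m^*(u_1)$ and $f_m^*(\bar u_2)$ account for the two "linear" summands; thus the $m-1$ middle summands of $W_t$ are exactly $f_1^*(\bar u_2),\dots,f_{m-1}^*(\bar u_2)$ up to re-indexing. Now $f_{m+1-l}^*(\bar u_2)$ is, in the Lie-theoretic model, the coordinate conjugate to the quantum parameter direction in a precise sense: differentiating $\mathcal F_h$ and imposing the Peterson-variety relations, one learns from \cite{rietsch,peterson} that on $\YPd^*$ the function $f_k^*(\bar u_2)\cdot(\text{denominator})$ picks out the quantum parameter $q^{?}$. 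The cleanest route is: in the Jacobi ring, the rational function $f_{m+1-l}^*$ is regular (its denominator $\Delta^{m+1,\dots,2m+1}_{m+2-j,\dots,2m+2-j}$, i.e. $\sum_J(-1)^{s(J)}\sigma_{\rho_l^J}\star\sigma_{\mu_l^J}$, is shown in \cite{MarshRietsch}-style arguments to be a unit), and multiplying the $l$-th summand identity by that denominator and then using the quantum Chevalley formula (Theorem~\ref{th:quantChevalley}) to evaluate the numerator shows that $f_{m+1-l}^*\cdot(\text{denominator})$ represents $q^l$. Since $f_{m+1-l}^*$ itself represents a hyperplane-class-type element, canceling it forces $(\text{denominator})=q^l$, which is \eqref{e:qrelation}.

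The third step, needed to make the above rigorous, is to prove directly that each $\D_{(j)}$, read in $H^*(\LG(m))$ via $\Vspin\cong H^*$, really does quantum-multiply to $q^l$; one can attempt this purely combinatorially by induction on $l$ using Theorem~\ref{th:quantChevalley}: expand $\sigma_{\rho_l^J}\star\sigma_{\mu_l^J}$ via the quantum Pieri rule for $\LG(m)$ (known classically, e.g. Bertram's/Kresch–Tamvakis rules), and show that the alternating sum over $J\subset\{1,\dots,l\}$ telescopes so that all classical terms and all higher-degree quantum terms cancel, leaving only $q^l\sigma_\emptyset$. The signs $(-1)^{s(J)}$ are exactly what one expects from an inclusion–exclusion over which "rows" of the staircase $\rho_l$ are deleted, matched against rows added to the bottom of $\mu_l$, so the cancellation should be an involution on pairs $(J,\text{surviving term})$.

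\textbf{Main obstacle.} The hard part is the combinatorial cancellation in Step~3: one must control the quantum Pieri expansion of products $\sigma_{\rho_l^J}\star\sigma_{\mu_l^J}$ for \emph{all} $2^l$ subsets $J$ simultaneously and exhibit a sign-reversing involution on the non-$q^l$ terms. The strict-partition bookkeeping (ensuring $\mu_l^J$ is nonzero only for admissible $J$, tracking the quantum degree shift $n_\alpha$, and matching the $\delta_{j,1}$ anomaly in the "$+$" versions) is delicate, and it is conceivable that the cleanest proof instead goes entirely through the Lie-theoretic side — i.e. proving $f_k^*(\bar u_2)=q^{?}/(\text{minor})$ as an identity of regular functions on $\YPd^*$ using the defining equations of the Peterson variety — thereby bypassing the explicit quantum Schubert calculus. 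I would pursue both in parallel but expect the Lie-theoretic argument, leaning on Theorem~\ref{theo:W} and the structure of $\Y$, to be the one that closes cleanly; the combinatorial statement would then follow as a corollary and could be recorded as independent evidence for the quantum Pieri conjectures mentioned in the introduction.
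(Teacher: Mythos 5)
You should be aware that the paper does not prove this statement at all: it is recorded there as a conjecture, verified only for $l=1$ as an immediate consequence of the quantum Chevalley formula (Theorem \ref{th:quantChevalley}), with the explicit remark that for $l>1$ the relations \eqref{e:qrelation} are new. Your proposal does not close the gap either. Step 1 is fine up to an index slip (with $l=m+1-j$, Proposition \ref{p:SymToMinor} together with \eqref{eq:fj} identifies the $l$-th summand of $W_t$ with $f_{l}^*(\bar u_2)$, not $f_{m+1-l}^*$), but it only reproduces identities between \emph{functions} of $\bar u_2$ that the paper already establishes. The missing bridge, which you use silently in Step 2, is the statement that under Peterson's isomorphism (equivalently the Jacobi-ring presentation of Corollary \ref{theo:LGmodel-G/P} transported through $\Psi$) the Schubert class $\sigma_\lambda$ corresponds to the restriction of $\p_\lambda/\p_{\emptyset}$ to the critical scheme, so that products of Pl\"ucker coordinates evaluated there compute quantum products of Schubert classes. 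That is the Lagrangian analogue of the Marsh--Rietsch result for type $A$ Grassmannians; it is proved nowhere in this paper, and it moreover presupposes Claim \ref{conj:iso}, which the authors explicitly defer. Without it, identifying the left-hand side of \eqref{e:qrelation} with the value of the $l$-th denominator on the critical locus is unjustified.

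Beyond that, the internal logic of your Step 2 is circular and, as written, incoherent: the premise that ``$f^*\cdot(\text{denominator})$ represents $q^l$'' is itself an unproven statement of exactly the same kind as \eqref{e:qrelation} (quantum Chevalley alone does not give it for $l>1$), and even granting it, you cannot ``cancel'' a hyperplane-type class $f^*$ to conclude $(\text{denominator})=q^l$ --- at best you would get $(\text{denominator})=q^l\,\sigma_{\ydiagram{1}}^{-1}$, and invertibility of $\sigma_{\ydiagram{1}}$ in $qH^*(\LG(m))[q^{-1}]$ is itself something you would have to argue. Finally, Step 3 --- the sign-reversing involution in the quantum Pieri expansion of $\sum_J(-1)^{s(J)}\sigma_{\rho_l^J}\star\sigma_{\mu_l^J}$ --- is the actual mathematical content of the conjecture, and you leave it entirely open, as you acknowledge. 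So what you have is a reasonable strategy, consistent with how the authors motivate the conjecture, but not a proof; to turn it into one you would need either the combinatorial cancellation of Step 3 or the Peterson-variety identification of Schubert classes with Pl\"ucker coordinates, and neither is available in the paper.
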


\begin{remark}
 If $l=1$, the relation \eqref{e:qrelation} is a consequence of the quantum Chevalley formula \ref{th:quantChevalley}. Indeed, this formula implies that
 \[
  \sigma_1 \star \sigma_m = \sigma_{m,1} + q,
 \]
 which, rewritten as 
 \[
  \sigma_1 \star \sigma_m - \sigma_\emptyset \star \sigma_{m,1} = q,
 \]
 is exactly the relation \eqref{e:qrelation} with $l=1$.
 For $l>1$ however, to the best of the authors' knowledge, the relations \eqref{e:qrelation} are new.
\end{remark}

\section{The B-model connection}
In this section we briefly state an explicit mirror symmetry conjecture for our superpotential $W_t$. Namely the conjecture asserts that a Gauss-Manin connection associated to $W$ should recover connections defined on the $A$-model side by Dubrovin and Givental, see 
\cite{Dub:2DTFT, Givental:EqGW,CoxKatz}.

Let $X=LG(m)$. Consider  $H^*(X,\C[\hbar, e^t])$ as space of sections on a trivial
bundle with fiber $H^*(X)$ and let 
\begin{eqnarray}
{}^A\nabla_{\partial_t} S&:=& \frac{d S}{dt} -
 \frac{1}{\hbar}
 \sigma^{\ydiagram{1}}\star_{e^t}S\\
{}^A \nabla_{\hbar\partial_\hbar} S&:=& \hbar\frac{\partial S}{\partial\hbar} + \frac{1}{\hbar} c_1(TX)\star_{e^t}S
\end{eqnarray}
define a meromorphic flat connection on this bundle.\footnote{ We are using the convenient notation $e^t$ for $q$ and $\partial_t$ for 
$q\partial_q$. Also, for simplicity of the statement of the conjecture, we have removed the grading operator contained in Dubrovin's original definition of ${}^A \nabla_{\hbar\partial_\hbar} $.}  This is our $A$-model side.

For the $B$-model let $N=\frac{m(m+1)}2$ denote the dimension of $\check X$.  Recall that $\check X^\circ$ is $\OG(m+1,2m+1)$ with an anti-canonical divisor removed. Therefore there is an up to scalar unique non-vanishing holomorphic N-form on $\check X^\circ$ which we will fix and call $\omega$. Let $\Omega^k(\check X^\circ)$ denote the space of all holomorphic $k$-forms.
\begin{defn}
Define the $\C[\hbar,e^t]$-module
\[
G_0^{W_t}:=\Omega^N(X)[\hbar,e^t]/( \hbar d +  dW_t\wedge - ) \Omega^{N-1}(X)[\hbar,e^t].
\]
It has a meromorphic (Gauss-Manin) connection given by
\begin{eqnarray}
{}^B\nabla_{\partial_t} [\alpha]&=&\frac\partial{\partial t} [\alpha] - \frac{1}{\hbar}[\frac{\partial W_t}{\partial t}\alpha],\\
{}^B\nabla_{\partial_\hbar} [\alpha]&=&\frac\partial{\partial \hbar} [\alpha] +\frac{1}{\hbar^2} [ W_t\, \alpha].
\end{eqnarray}
\end{defn}

We conjecture that the function $W_t$ is cohomologically tame \cite{sabbahtame} and the elements 
$[\p_\lambda \omega]$ freely generate $G_0^{W_t}$, where the $\p_{\lambda}$'s are the  Pl\"ucker coordinate on $OG^{co}(m+1,V^*)$ and $\lambda$ runs through the strict partitions inside an $m\times m$ box.

Independently of this we conjecture
the following.

\begin{conjecture}\label{c:main}
The differential operators $\hbar{\, }^B\nabla_{\partial t}$ and $\hbar{\, }^B\nabla_{\hbar\partial{\hbar}}$
preserve the $\C[\hbar, e^t]$-submodule $\bar G_0^{W_t}$ of $G_0^{W_t}$ generated by the $[\, \p_\lambda \omega]$. Moreover the assignment  $\sigma^{\lambda}\mapsto[\p_{\lambda}\omega]$ defines an isomorphism of $H^*(X,\C[\hbar, e^t])$ with $\bar G_0^{W_t}$  under which ${}^A\nabla$ is identified with ${}^B\nabla$. 
\end{conjecture}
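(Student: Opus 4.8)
The plan is to transport the whole question to the Lie-theoretic model $(\RR,\mathcal F_h)$ of \cite{rietsch} by means of Theorem~\ref{theo:W} (together with Claim~\ref{conj:iso}), which identifies $(\check X^\circ,W_t)$ with $(\RR,\mathcal F_h)$ and hence $\mathrm{Jac}(W_t)$ with $\mathrm{Jac}(\mathcal F_h)$. Reducing modulo $\hbar$ turns $\hbar\,{}^B\nabla_{\partial_t}$ into multiplication by $-\tfrac{\partial W_t}{\partial t}=-e^t\,\p_{\rho_{m-1}}/\p_{\rho_m}$ and $\hbar\,{}^B\nabla_{\hbar\partial_\hbar}$ into multiplication by $W_t$ on the Jacobi ring $\mathrm{Jac}(W_t)\cdot\omega = G_0^{W_t}/\hbar G_0^{W_t}$; by Corollary~\ref{theo:LGmodel-G/P} this Jacobi ring is $qH^*(\LG(m))$, under which $\p_\lambda$ corresponds to $\sigma_\lambda$, the element $\tfrac{\partial W_t}{\partial t}$ (one of the $m+1$ summands of $W_t$, each of which represents a hyperplane class) goes to $\sigma_1$, and $W_t$ itself goes to $c_1(TX)$. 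Thus the $\hbar^0$-shadow of the conjecture is automatic, and all the work lies in controlling the full $\hbar$-dependence.

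The first technical step is cohomological tameness of $W_t$ and the freeness claim. Since by \cite{rietsch} the critical scheme of $\mathcal F_h$, hence of $W_t$, is the \emph{finite} scheme $Y_{P,e^h}^*$, the point is to rule out critical points escaping to infinity along each component $D_0,\dots,D_m$ of the anti-canonical divisor $D$ inside the compactification $\check X=\OG(m+1,2m+1)$; Sabbah's criterion \cite{sabbahtame} then makes $G_0^{W_t}$ a free $\C[\hbar,e^t]$-module of rank $\dim_\C H^*(\LG(m))=2^m$. That the classes $[\p_\lambda\omega]$ form a basis can be tested after reduction modulo $\hbar$ and after the degeneration $e^t\to 0$, where $W_t$ loses its last summand and the statement becomes one about the classical cohomology ring, and then propagated back by a Nakayama/semicontinuity argument; alternatively one adapts the Pl\"ucker-coordinate computations of \cite{MarshRietsch} from the type $A$ Grassmannian.

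For the identification of the connections themselves I see two routes. The first, in the spirit of \cite{MarshRietsch}, is to realise flat sections of ${}^B\nabla$ as oscillating integrals $\int_\Gamma e^{W_t/\hbar}\,\p_\lambda\,\omega$ over Lefschetz thimbles $\Gamma$ and to match their stationary-phase expansions with the flat sections of the quantum connection ${}^A\nabla$ of $\LG(m)$, i.e.\ with Givental's $J$-function and its descendants; this would at once show that $\hbar\,{}^B\nabla$ preserves $\bar G_0^{W_t}$ and that $\sigma^\lambda\mapsto[\p_\lambda\omega]$ intertwines ${}^A\nabla$ with ${}^B\nabla$. The second, more algebraic, route is to prove that $[\omega]=[\p_\emptyset\omega]$ is a cyclic vector for ${}^B\nabla_{\partial_t}$, to compute the scalar differential operator in $\partial_t$ annihilating it, and to check that it coincides with the quantum differential equation of $\LG(m)$; by the quantum Chevalley formula of Theorem~\ref{th:quantChevalley} together with the relations~\eqref{e:qrelation} this reduces to the combinatorics of the staircase and maximal partitions $\rho_l,\mu_l$ that already govern the denominators $D_l$ of $W_t$, while the $\hbar\partial_\hbar$-equation follows from $W_t\equiv c_1(TX)$ in the Jacobi ring by the recursion that also yields preservation of $\bar G_0^{W_t}$.

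The hard part will be cohomological tameness. In contrast with the toric mirrors, or even with the type $A$ Grassmannian, $W_t$ is a genuine rational function on the non-toric variety $\OG(m+1,2m+1)$, so verifying the needed finiteness and the absence of critical points at infinity simultaneously along all components $D_l$ of the anti-canonical divisor is a delicate global input on which the entire de Rham-theoretic machinery depends. A close second obstacle is the explicit expansion of ${}^B\nabla_{\partial_t}[\p_\lambda\omega]$ in the basis $\{[\p_\nu\omega]\}$ with coefficients equal to the quantum Chevalley structure constants, since this presupposes the quantum cohomology relations~\eqref{e:qrelation} of $\LG(m)$, which at present are established only for $l=1$.
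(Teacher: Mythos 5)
The statement you set out to prove is stated in the paper as Conjecture~\ref{c:main}; the paper offers no proof of it (it even states the tameness and freeness of $G_0^{W_t}$ only as a separate conjecture), so there is no argument of the authors to compare yours with. What you have written is, by your own framing, a programme rather than a proof, and every load-bearing step is deferred. Concretely: (i) you invoke Claim~\ref{conj:iso}, which the paper itself leaves unproved; (ii) cohomological tameness of $W_t$ and the statement that the $[\p_\lambda\omega]$ freely generate $G_0^{W_t}$ over $\C[\hbar,e^t]$ are exactly the inputs the paper only conjectures, and you do not establish them --- ruling out critical points escaping to infinity along the divisors $D_0,\dots,D_m$ is the hard global input, and your Nakayama/semicontinuity remark presupposes the finiteness and coherence that this tameness is supposed to deliver; (iii) reducing modulo $\hbar$ only recovers the Jacobi-ring statement of Theorem~\ref{theo:W} and Corollary~\ref{theo:LGmodel-G/P}, i.e.\ the $\hbar^0$ shadow, which is far weaker than the assertion that $\hbar\,{}^B\nabla_{\partial_t}$ and $\hbar\,{}^B\nabla_{\hbar\partial_\hbar}$ preserve $\bar G_0^{W_t}$ and that $\sigma^\lambda\mapsto[\p_\lambda\omega]$ intertwines ${}^A\nabla$ with ${}^B\nabla$ to all orders in $\hbar$.

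The two routes you sketch for the genuine content --- matching oscillating integrals $\int_\Gamma e^{W_t/\hbar}\p_\lambda\omega$ with Givental's $J$-function, or computing the scalar operator annihilating the cyclic vector $[\omega]$ and comparing with the quantum differential equation of $\LG(m)$ --- are named but not carried out, and the second explicitly depends on the relations \eqref{e:qrelation}, which the paper establishes only for $l=1$ via the quantum Chevalley formula. So the proposal is a reasonable strategy, close in spirit to what was done for type $A$ Grassmannians in \cite{MarshRietsch}, but it does not constitute a proof of any part of Conjecture~\ref{c:main} beyond the essentially tautological identification at $\hbar=0$; the conjecture remains open both in the paper and in your write-up.
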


\appendix
\section{Appendix}

We may give also a Laurent polynomial expression for $W_t$ restricted to a particular torus. Namely let us pull back $W_t$ to the open subset of $\check X$ defined as the image of the map $(\C^*)^N\hookrightarrow P\backslash G$ which sends $(b_1,\dotsc, b_N)$ to
$P\bar u_2$, where as in Section~\ref{s:minors}
\begin{equation}\label{e:u2barfactbb}
\bar u_2=y_{m}(b_{N})    \dots y_{2}(b_{N-m+2}) y_{1}(b_{N-m+1})\dotsc 
 y_{m}(b_3)y_{m-1}(b_2)y_m(b_1).
 \end{equation}

\begin{prop}[Laurent polynomial restriction of $W_t$]
 \label{prop:W1}
 The Landau-Ginzburg model $W_t$ of $X=\LG(m)$ defined in Theorem \ref{theo:LGmodel-G/P} restricts to the open torus defined above to give
 \[
\tilde W_t(b_1,\dotsc, b_N) = \sum_{j=1}^N b_j + e^t\ \frac{\mathcal N(b_1,\dotsc, b_N)}{\prod_{j=1}^{N}b_j},
 \]
 where 
 \[
 \mathcal N(b_1,\dotsc, b_N) := \sum b_{j_{i_1}} \dots b_{j_{i_{N-m}}},
 \]
 and the sum is over all subsets $\{ i_1 < \dots <i_{N-m}\}$ of $\{1,\dotsc, N\}$ such that 
 $ (s_{j_{i_1}} \dots s_{j_{i_N}})s_1\dotsc s_m $ is a reduced
 expression for $w^P$. 
\end{prop}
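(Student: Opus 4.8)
The plan is to evaluate $\tilde{\mathcal F}_h$ (equivalently the right-hand side of \eqref{e:Wt}) at the generic element $\bar u_2$ of \eqref{e:u2barfactbb} and to read off the two summands using Lemma~\ref{l:bislemma} and Proposition~\ref{p:eis}. By construction the open torus is the image of $(b_1,\dotsc,b_N)\mapsto P\bar u_2$, and by Theorem~\ref{theo:W} (more precisely the identity $\Psi_L^*W_t=\tilde{\mathcal F}_h$ established through Section~\ref{s:minors}) one has
\[
 \tilde W_t(b_1,\dotsc,b_N)=\tilde{\mathcal F}_h(u_1 e^h\dot w_P\bar u_2)=\sum_{i=1}^m e_i^*(u_1)+\sum_{i=1}^m f_i^*(\bar u_2),
\]
where $u_1$ is the generically unique completion of $\bar u_2$ to an element of $Z_h$. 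Proposition~\ref{p:eis} immediately reduces the first sum, namely $\sum_{i=1}^m e_i^*(u_1)=e_m^*(u_1)=e^t\,\p_{\rho_{m-1}}(\bar u_2)/\p_{\rho_m}(\bar u_2)$, so it remains to compute $\sum_{i=1}^m f_i^*(\bar u_2)$ and the two Pl\"ucker coordinates on the torus.

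For the linear part I would argue directly from $\bar u_2=\prod_k y_{i_k}(b_k)=\prod_k\exp(b_k f_{i_k})$, where $(i_1,\dotsc,i_N)$ is the fixed reduced word $\mathbf w^P=(s_m)(s_{m-1}s_m)\dotsb(s_1 s_2\dotsb s_m)$ of $w^P$ and each $y_{i_k}(b_k)$ occurs exactly once for $k=1,\dotsc,N$. Since $\hat{\mathcal U}_-$ is graded by the root lattice, the component of this product of multidegree $\alpha_i$ is $\bigl(\sum_{k:\,i_k=i}b_k\bigr)f_i$, every cross-term and every $f_{i_k}^2$-term having strictly larger degree; hence $f_i^*(\bar u_2)=\sum_{k:\,i_k=i}b_k$ and $\sum_{i=1}^m f_i^*(\bar u_2)=\sum_{k=1}^N b_k$, the first summand of $\tilde W_t$. (One could instead plug Proposition~\ref{p:SymToMinor} and \eqref{eq:fj} into \eqref{e:Wt}: the middle ratios collapse to $\sum_{j=1}^{m-1}f_j^*(\bar u_2)$ and the first term to $f_m^*(\bar u_2)$, giving the same value.)

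For the remaining term I would apply Lemma~\ref{l:bislemma}. Since $\rho_m$ is the unique strict partition of size $N=\dim\LG(m)$ in the $m\times m$ square, $w_{\rho_m}=w^P$, and the only length-$N$ subword of $\mathbf w^P$ is $\mathbf w^P$ itself, so $\p_{\rho_m}(\bar u_2)=b_1 b_2\dotsb b_N=\prod_{j=1}^N b_j$. For $\p_{\rho_{m-1}}(\bar u_2)$ the lemma gives $\sum_J b_{j_1}\dotsb b_{j_{N-m}}$, summed over increasing $J=\{j_1<\dotsb<j_{N-m}\}\subset\{1,\dotsc,N\}$ with $s_{i_{j_1}}\dotsb s_{i_{j_{N-m}}}$ a reduced word for $w_{\rho_{m-1}}$. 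To turn this into the stated expression for $\mathcal N$ I would use the length-additive factorisation
\[
 w^P=w_{\rho_{m-1}}\cdot(s_1 s_2\dotsb s_m),\qquad \ell(w^P)=\ell(w_{\rho_{m-1}})+m,
\]
which holds because deleting the final block $(s_1 s_2\dotsb s_m)$ from $\mathbf w^P$ leaves the reduced word $(s_m)(s_{m-1}s_m)\dotsb(s_2 s_3\dotsb s_m)$ of $w_{\rho_{m-1}}$; one checks the latter by acting with $e_m(e_{m-1}e_m)\dotsb(e_2\dotsb e_m)$ on the lowest weight vector and obtaining $v_2\wedge v_3\wedge\dotsb\wedge v_m=w_{\{2,\dotsc,m\}}=w_{\rho_{m-1}}$, exactly parallel to the fact that $\mathbf w^P$ itself sends $w_\emptyset$ to $v_1\wedge\dotsb\wedge v_m$. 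Granting this, an increasing subword $s_{i_{j_1}}\dotsb s_{i_{j_{N-m}}}$ of $\mathbf w^P$ is reduced for $w_{\rho_{m-1}}$ if and only if appending $s_1 s_2\dotsb s_m$ yields a word of length $N=\ell(w^P)$, i.e.\ a reduced expression for $w^P$, which is precisely the condition indexing the monomials of $\mathcal N$. Hence $\p_{\rho_{m-1}}(\bar u_2)=\mathcal N(b_1,\dotsc,b_N)$, and assembling the three computations gives $\tilde W_t(b)=\sum_{j=1}^N b_j+e^t\,\mathcal N(b)/\prod_j b_j$.

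The one step that is not a formal substitution is the length-additive factorisation $w^P=w_{\rho_{m-1}}(s_1 s_2\dotsb s_m)$ — equivalently, the identification of the truncated reduced word of $w^P$ with $w_{\rho_{m-1}}$ — and this is where I expect the (modest) work to lie; it can be handled either by the spin-representation computation sketched above or, by induction on $m$, by recognizing the truncated word as the $w^P$-word of $\LG(m-1)$ inside the parabolic generated by $s_2,\dotsc,s_m$. Everything else is a direct consequence of Theorem~\ref{theo:W}, Proposition~\ref{p:eis}, and Lemma~\ref{l:bislemma}.
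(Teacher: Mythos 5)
Your primary route has a circularity problem when measured against the paper's own logical structure. You discharge the two nontrivial facts about $u_1$ --- the vanishing $e_i^*(u_1)=0$ for $i<m$ and the identity $e_m^*(u_1)=e^t\,\p_{\rho_{m-1}}(\bar u_2)/\p_{\rho_m}(\bar u_2)$ --- by citing Proposition~\ref{p:eis}, but in the paper the equations \eqref{eq:ei2first} and \eqref{eq:ei3first} are proved only as consequences of Lemmas~\ref{l:ei} and~\ref{l:em}, which are stated and proved \emph{inside} the paper's proof of Proposition~\ref{prop:W1} itself (and Theorem~\ref{theo:W}, which you also invoke, depends on Proposition~\ref{p:eis} in turn). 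The paper's proof does exactly the work you skip: it writes $e_i^*(u_1)$ as a ratio of matrix coefficients $\langle \dot w_P\bar u_2\, v_{\omega_i}^+, e_i\cdot v_{\omega_i}^-\rangle/\langle \dot w_P\bar u_2\, v_{\omega_i}^+, v_{\omega_i}^-\rangle$, kills the cases $i<m$ by a weight estimate, and evaluates the $i=m$ case by the reduced-word/weight analysis built on the element $w'=s_m(s_{m-1}s_m)\cdots(s_2\cdots s_m)$. The genuinely new content you supply --- $\p_{\rho_m}(\bar u_2)=\prod_j b_j$ and $\p_{\rho_{m-1}}(\bar u_2)=\mathcal{N}$ via Lemma~\ref{l:bislemma} and the length-additive factorisation $w^P=w'\cdot(s_1\cdots s_m)$ (correct, since the truncated word is a prefix of the fixed reduced word of $w^P$, and $w'$ is the $W^P$-element of the partition $\rho_{m-1}$) --- is precisely the step the paper uses to pass from Lemma~\ref{l:em} to \eqref{eq:ei3first}, run in the opposite direction; using \eqref{eq:ei3first} to recover the content of Lemma~\ref{l:em} proves nothing new, and the vanishing \eqref{eq:ei2first} is nowhere argued independently in your write-up.

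The good news is that your parenthetical alternative is a complete and non-circular proof, and it is genuinely different from the paper's: evaluate the defining formula \eqref{e:Wt} at $P\bar u_2$ directly, use Proposition~\ref{p:SymToMinor} together with \eqref{eq:fj} (both established in the body of the paper, independently of the appendix) to collapse the first term and the middle terms to $\sum_{i=1}^m f_i^*(\bar u_2)$, which your grading argument correctly identifies with $\sum_{k=1}^N b_k$ in agreement with \eqref{eq:ei1}, and then handle the $e^t$-term by your Lemma~\ref{l:bislemma} computation of $\p_{\rho_{m-1}}$ and $\p_{\rho_m}$. This route never needs $u_1$, Proposition~\ref{p:eis} or Theorem~\ref{theo:W}, and trades the paper's weight-space arguments in the fundamental representations for the already-proved minor identities plus subword combinatorics; I would promote it to the main argument, or else keep your first route but supply independent proofs of \eqref{eq:ei2first} and \eqref{eq:ei3first} along the lines sketched above.
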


\begin{proof}
We will rename the coordinates $b_i$ when convenient by $a_{i,j}$, in terms of which 
$\bar u_2$  is given by
\[
\big (y_{m}(a_{m,m})    y_{m-1}(a_{m-1,m}) \dots y_{1}(a_{1,m})\big ) \dotsc 
 \big(y_{m}(a_{m,2}) y_{m-1}(a_{m-1,2})\big)y_m(a_{m,1}).
 \]

 As a consequence of the shape of $\bar u_2$ and the definition of the $y_i$, we immediately obtain :
 \begin{equation}
  \label{eq:ei1}
   f_i^* (\bar u_2) = \sum_{j=m+1-i}^m a_{j}^{(i)}.
 \end{equation}
 We now need to compute the $e_i^* (u_1)$, where $u_1$ is such that $u_1 e^h \dot{w_P}  \bar u_2 \in B_- \dot{w_0}$.
 \begin{lemma}
  \label{l:ei}
   \begin{equation}
    \label{eq:ei2}
     e_i^* (u_1) = 0 \text{ for all } 1 \leq i \leq m-1
   \end{equation}
  \end{lemma}
  \begin{proof}[Proof of Lemma~\ref{l:ei}]
   From \cite{rietsch}, we know that 
   \begin{align*}
    e_i^* (u_1) & = \frac{\langle u_1^{-1} v_{\omega_i}^- , e_i \cdot v_{\omega_i}^- \rangle}{\langle u_1^{-1} v_{\omega_i}^- , v_{\omega_i}^- \rangle} \\
		     & = \frac{\langle e^h \dot{w_P} \bar u_2 \dot{w_0}^{-1} v_{\omega_i}^- , e_i \cdot v_{\omega_i}^- \rangle}{\langle e^h \dot{w_P} \bar u_2 \dot{w_0}^{-1} v_{\omega_i}^- , v_{-\omega_i}^- \rangle} \\
		     & = \frac{\langle e^h \dot{w_P} \bar{u}_2 v_{\omega_i}^+ , e_i \cdot v_{\omega_i}^- \rangle}{\langle e^h \dot{w_P} \bar{u}_2 v_{\omega_i}^+ , v_{\omega_i}^- \rangle}.
   \end{align*}
   Now $e_i^* (u_1) = 0$ if and only if $\langle \overline{u}_2 v_{\omega_i}^+ , \dot{w}_P^{-1} e_i \cdot v_{\omega_i}^- \rangle = 0$. The vector $\dot{w}_P^{-1} e_i \cdot v_{\omega_i}^-$ is in the $\mu$-weight space of the $i$-th fundamental representation, where $\mu := w_P^{-1} s_i (-\omega_i)$. Moreover, $\overline{u}_2 \in B_+ (\dot w^P)^{-1}  B_+$, hence it can only have non-zero components down to the weight space of weight $(w^P)^{-1} (\omega_i)=w_P^{-1}(-\omega_i)$. However, $\mu$ is lower than $w_P^{-1}(-\omega_i)$ when $i \neq m$. 
  \end{proof}
  We are left with computing $e_m^* (u_1)$ :
  \begin{lemma}
  \label{l:em}
   \begin{equation}
    \label{eq:ei3}
    e_m^* (u_1) = e^t\ \frac{\mathcal N(b_1,\dotsc, b_N)}{\prod_{j=1}^{N}b_j}
   \end{equation}
  \end{lemma}
  \begin{proof}[Proof of Lemma~\ref{l:em}]
   As in the proof of Lemma~\ref{l:ei}, we have
   \begin{align*}
    e_m^* (u_1) & = \frac{\langle e^h\dot{w_P} \bar{u}_2 v_{\omega_m}^+ , e_m \cdot v_{\omega_m}^- \rangle}{\langle e^h\dot{w_P} \bar{u}_2 v_{\omega_m}^+ , v_{\omega_m}^- \rangle} \\
		     & = (\omega_m + \alpha_m - \omega_m)(e^h) \frac{\langle \dot{w_P} \bar{u}_2 v_{\omega_m}^+ , e_m \cdot v_{\omega_m}^- \rangle}{\langle \dot{w_P} \bar{u}_2 v_{\omega_m}^+ , v_{\omega_m}^- \rangle} \\
		     & = 
		    e^t \frac{\langle \dot{w_P} \bar{u}_2 v_{\omega_m}^+ , e_m \cdot v_{\omega_m}^- \rangle}{\langle \dot{w_P} \bar{u}_2 v_{\omega_m}^+ , v_{\omega_m}^- \rangle}.
   \end{align*}
   Indeed, $\alpha_m(e^h)=e^t$. 
    Moreover, $\langle \dot{w_P} \overline{u}_2 v_{\omega_m}^+ , v_{\omega_m}^- \rangle = \langle \overline{u}_2 v_{\omega_m}^+ , \dot{w_P}^{-1} v_{\omega_m}^- \rangle = \langle \overline{u}_2 v_{\omega_m}^+ , v_{\omega_m}^- \rangle$. Now the only way to go from the lowest weight vector $v_{\omega_m}^-$ of the $m$-th fundamental representation to the highest $v_{\omega_m}^+$ is to apply $w_0$. Since $\overline{u}_2 \in B (w^P)^{-1} B$, it follows that we need to take all factors of $\overline{u}_2$, hence $\langle \dot{w_P} \overline{u}_2 v_{\omega_m}^+ , v_{\omega_m}^- \rangle = \prod_{j=1}^{N}b_j$.
 
   Now we prove that $\langle \dot{w_P} \overline{u}_2 v_{\omega_m}^+ , e_m \cdot v_{\omega_m}^- \rangle = \mathcal N(b_1,\dotsc, b_N)$. Indeed :
   \[
    \langle \dot{w_P} \overline{u}_2 v_{\omega_m}^+ , e_m \cdot v_{\omega_m}^- \rangle = \langle \overline{u}_2 v_{\omega_m}^+ , \dot{w_P}^{-1} e_m \cdot v_{\omega_m}^- \rangle,
   \]
   and the weight of the vector $\dot{w_P}^{-1} e_m \cdot v_{\omega_m}^-$ is $\mu' := \frac{1}{2}(\epsilon_1 - \epsilon_2 - \dots - \epsilon_m)$. Now consider the Weyl group element
   \[
    w' := s_m (s_{m-1} s_m) \dots (s_2 \dots s_{m-1} s_m).
   \]
We have
   \[
    w' \cdot \omega_m = \frac{1}{2}(\epsilon_1 - \epsilon_2 - \dots - \epsilon_m).
   \]
   Hence the way to the $\mu'$-weight space is through one of the reduced expression for $w'$, which concludes the proof of the claim.
  \end{proof}
Now the proof of Proposition \ref{prop:W1} follows immediately from Theorem \ref{theo:LGmodel-G/P} and the equations \eqref{eq:ei1}, \eqref{eq:ei2} and \eqref{eq:ei3}.
\end{proof}

The expression for the Landau-Ginzburg model in Proposition \ref{prop:W1} is quite close to the usual expression for the Landau-Ginzburg model of projective space $\P^n$, which looks like :
\[
 W^{\P^n}_t = x_1 + x_2 + \dots + x_n + \frac{e^t}{x_1 x_2 \dots x_n}.
\]
Indeed, It is the sum of as many parameters as the dimension of the variety, plus a more complicated $e^t$-term depending on those parameters. To the best of our knowledge, this expression is new for $\LG(m)$ with $m>2$. However, for the three-dimensional quadric $\LG(2)$, we obtain :
\[
 W^{LG(2)}_t = a_{2,1} + a_{1,2} + a_{2,2} + e^t \frac{a_{2,1}+a_{2,2}}{a_{2,1} a_{1,2} a_{2,2}},
\]
which, up to a toric change of coordinates, corresponds to one of the expressions of \cite{przyjalkowski}.

\bibliography{lagrangian}

\begin{thebibliography}{BCFKvS00}

\bibitem[BCFKvS00]{BCFKvS}
Victor~V. Batyrev, Ionu{\c{t}} Ciocan-Fontanine, Bumsig Kim, and Duco van
  Straten.
\newblock Mirror symmetry and toric degenerations of partial flag manifolds.
\newblock {\em Acta Math.}, 184(1):1--39, 2000.

\bibitem[CK99]{CoxKatz}
David Cox and Sheldon Katz.
\newblock {\em Mirror Symmetry and Algebraic Geometry}.
\newblock American Mathematical Soc., 1999.

\bibitem[Dub96]{Dub:2DTFT}
Boris Dubrovin.
\newblock Geometry of 2d topological field theories.
\newblock {\em Integrable Systems and Quantum Groups}, 1620:120 -- 348, 1996.

\bibitem[FW04]{FW}
William Fulton and Christopher Woodward.
\newblock On the quantum product of {S}chubert classes.
\newblock {\em J. Algebraic Geom.}, 13(4):641--661, 2004.

\bibitem[{Gin}97]{ginzburg}
{Ginzburg, V.}
\newblock {Perverse sheaves on a loop group and Langlands duality}.
\newblock {preprint}, 1997.

\bibitem[Giv96]{Givental:EqGW}
Alexander~B. Givental.
\newblock Equivariant {G}romov-{W}itten invariants.
\newblock {\em IMRN}, 13:613--663, 1996.

\bibitem[GK95]{GiventalKim}
Alexander Givental and Bumsig Kim.
\newblock Quantum cohomology of flag manifolds and {T}oda lattices.
\newblock {\em Comm. Math. Phys.}, 168(3):609--641, 1995.

\bibitem[Kim99]{Kim}
Bumsig Kim.
\newblock Quantum cohomology of flag manifolds g/b and quantum toda lattices.
\newblock {\em Annals of Mathematics, Second Series}, 149(No. 1):129--148,
  Jan., 1999.

\bibitem[Lus83]{lusztig}
G.~Lusztig.
\newblock Singularities, character formulas and a q-analog of weight
  multiplicities.
\newblock {\em Ast\'erisque}, 101-102:208--229, 1983.

\bibitem[MR12]{MarshRietsch}
R.~Marsh and K.~Rietsch.
\newblock {The $B$-model connection and $T$-equivariant mirror symmetry for
  Grassmannians}.
\newblock 2012.

\bibitem[MV07]{MV}
I.~Mirkovi{\'c} and K.~Vilonen.
\newblock Geometric {L}anglands duality and representations of algebraic groups
  over commutative rings.
\newblock {\em Ann. of Math. (2)}, 166(1):95--143, 2007.

\bibitem[Pet97]{peterson}
D.~Peterson.
\newblock {Quantum cohomology of $G/P$}.
\newblock {Lecture Course, MIT, Spring Term}, 1997.

\bibitem[Prz07]{przyjalkowski}
Victor Przyjalkowski.
\newblock On {L}andau-{G}inzburg models for {F}ano varieties.
\newblock {\em Commun. Number Theory Phys.}, 1(4):713--728, 2007.

\bibitem[Rie08]{rietsch}
Konstanze Rietsch.
\newblock A mirror symmetric construction of {$qH^\ast_T(G/P)_{(q)}$}.
\newblock {\em Adv. Math.}, 217(6):2401--2442, 2008.

\bibitem[Sab99]{sabbahtame}
Claude Sabbah.
\newblock Hypergeometric period for a tame polynomial.
\newblock {\em C. R. Acad. Sci. Paris Ser. I Math.}, 328(7):603--608, 1999.

\bibitem[Var04]{Varadarajan}
V.~S. Varadarajan.
\newblock {\em Supersymmetry for mathematicians: an introduction}, volume~11 of
  {\em Courant Lecture Notes in Mathematics}.
\newblock New York University Courant Institute of Mathematical Sciences, New
  York, 2004.

\end{thebibliography}
\bibliographystyle{alpha}

\end{document}